\documentclass[11pt]{amsart}
\usepackage{latexsym, amssymb, url, mathrsfs, amsfonts, MnSymbol}
\usepackage[pdftex,lmargin=1.45in, rmargin=1.45in,tmargin=1.25in,bmargin=1.25in, marginpar=3.25cm]{geometry}

\usepackage{color}

\renewcommand{\MR}[1]{ }

\theoremstyle{plain}

\newenvironment{customthm}[1]
  {\innercustomthm}
  {\endinnercustomthm}

\newtheorem{thm}{Theorem}
\newtheorem{conj}[thm]{Conjecture}
\numberwithin{thm}{subsection}

\newtheorem{coro}[thm]{Corollary}
\newtheorem{lem}[thm]{Lemma}
\newtheorem{prop}[thm]{Proposition}

\theoremstyle{definition}

\newtheorem{defi}[thm]{Definition}

\theoremstyle{remark}
\newtheorem{remark}[thm]{Remark}
\newtheorem{rmk}[thm]{Remark}

\title[]{Differential operators mod $p$:\\
analytic continuation and consequences}
\author[Eischen]{E. E. Eischen$^\dag$}
\thanks{$\dag$Partially supported by NSF Grants DMS-1559609 and DMS-1751281.}
\author[Flander]{M. Flander$^\ddag$}
\thanks{$\ddag$Supported by an Australian Postgraduate Award.}
\author[Ghitza]{A. Ghitza}
\author[Mantovan]{E. Mantovan}
\author[McAndrew]{A. McAndrew$^*$}
\thanks{$*$Partially supported by an Australian Postgraduate Award and the Albert Shimmins Writing Up Award.}

\address{E. E. Eischen\\
Department of Mathematics\\
University of Oregon\\
Fenton Hall\\
Eugene, OR 97403-1222\\
USA}
\email{eeischen@uoregon.edu}

\address{M. Flander\\
PlanGrid\\
2111 Mission St.\ Suite 400\\
San Francisco, CA 94110\\
USA
}
\email{max.flander@gmail.com}

\address{A. Ghitza\\
School of Mathematics and Statistics\\
University of Melbourne\\
Parkville VIC 3010\\
 Australia\\
}
\email{aghitza@alum.mit.edu}

\address{E. Mantovan\\
Department of Mathematics\\
Caltech\\
1200 E California Blvd\\
Pasadena, CA 91125\\
USA
}
\email{mantovan@caltech.edu}

\address{A. McAndrew\\
Department of Mathematics and Statistics\\
Boston University\\
111 Cummington Mall\\
 Boston MA 02215\\
USA
}
\email{angusmca@bu.edu}

\date{\today}



\renewcommand{\geq}{\geqslant}
\renewcommand{\leq}{\leqslant}

\newcommand{\Aut}{\operatorname{Aut}}

\newcommand{\RR}{\mathbb{R}}

\newcommand{\Fbar}{\overline{F}}
\newcommand{\FF}{\mathbb{F}}
\newcommand{\fp}{\mathbb{F}_p}
\newcommand{\fpbar}{\overline{\FF}_p}
\newcommand{\Frob}{\operatorname{Frob}}
\newcommand{\Gal}{\operatorname{Gal}}
\newcommand{\gm}{\mathbb{G}_m}
\newcommand{\GSp}{\operatorname{GSp}}

\newcommand{\longto}{\longrightarrow}

\newcommand{\oh}{\mathscr{O}}

\newcommand{\Tr}{\operatorname{Tr}}

\newcommand{\xub}{X^\bullet}
\newcommand{\xlb}{X_\bullet}
\newcommand{\dub}{\Delta^\bullet}
\newcommand{\dlb}{\Delta_\bullet}

\newcommand{\CA}{\mathcal{A}}

\newcommand{\Auniv}{\CA}


\newcommand{\Sh}{\mathcal{X}}


\newcommand{\Shimuravariety}{\Sh}


\newcommand{\Shp}{X}


\newcommand{\shord}{{\mathcal S}}

\newcommand{\Shpord}{S}

\newcommand{\shpord}{S}


\newcommand{\hdrp}{{H^1_{\mathrm dR}(\Auniv/\Shp)}}


\newcommand{\witt}{W(\overline{\bF}_p)}
\newcommand{\Witt}{W(\overline{\mathbb F}_p)}
\newcommand{\gf}{\Gal\left(\Fbar/F\right)}
\newcommand{\gq}{\Gal\left(\Qbar/\QQ\right)}

\newcommand{\WittZp}{\mathbb{W}}

\newcommand{\ee}{\mathbf{e}}



\newcommand{\Trace}{\mathrm{Trace}}





\newcommand{\ord}{\mathrm{ord}}

\newcommand{\CO}{\mathcal{O}}

\newcommand{\Image}{\mathrm{Image}}

\newcommand{\KS}{\mathrm{KS}}
\newcommand{\ks}{\mathrm{ks}}
\newcommand{\id}{\mathrm{id}}
\newcommand{\cmfield}{F}


\newcommand{\CE}{\mathcal{E}}

\newcommand{\TT}{\mathcal{T}}



\newcommand{\Lie}{\mathrm{Lie}}

\newcommand{\Spec}{\mathrm{Spec}}


\newcommand{\Hom}{\mathrm{Hom}}

\newcommand{\diag}{\mathrm{diag}}

\newcommand{\Z}{{\mathbb Z}}

\newcommand{\cF}{\mathcal{F}}

\newcommand{\cG}{\mathcal{G}}

\newcommand{\C}{{\mathbb C}}

\newcommand{\T}{\mathcal{T}}
\newcommand{\G}{\mathcal{G}}

\newcommand{\p}{\mathfrak{p}}

\newcommand{\hasse}{{E}}

\newcommand{\bF}{{\mathbb F}}
\newcommand{\cO}{{\mathcal O}}

\newcommand{\curlyU}{\mathscr{U}}


\newcommand{\absfrob}{\mathrm{F}}


\newcommand{\relfrob}{\mathrm{Fr}}


\newcommand{\hdrord}{{H^1_{\mathrm dR}(\Auniv/\shord)}}


\newcommand{\hdrordp}{{H^1_{\mathrm dR}(\Auniv/\shpord)}}


\newcommand{\rpi}{{R^1\pi_*\cO}}


\newcommand{\ci}{C^{\infty}}

\newcommand{\IQ}{\QQ}
\newcommand{\Qbar}{\overline{\QQ}}

\newcommand{\GL}{\mathrm{GL}}

\newcommand{\adeles}{\mathbb{A}}

\newcommand{\ZZ}{\mathbb{Z}}
\newcommand{\isomto}{\overset{\sim}{\rightarrow}}
\newcommand{\CC}{\mathbb{C}}

\newcommand{ \Diff}{{\mathcal D}}



\newcommand{\pr}{{\pi}}

\newcommand{\PR}{{\Pi}}

\newcommand{\yo}{y}



\newcommand{\adj}[1]{\,^{\star}\!\!#1}


\newcommand{\QQ}{\mathbb{Q}}

\newcommand{\varla}{d}

\newcommand{\hdrAY}{H^1_{\mathrm dR}(\mathscr{A}/Y)}


\renewcommand{\L}[1]{\,^{L}\!#1}


\newcommand{\zqv}{\ZZ\left[q_v^{\pm 1/2}\right]}


\renewcommand{\AA}{\mathbb{A}}

\begin{document}

\bibliographystyle{amsalpha}

\begin{abstract}
This paper concerns certain $\mod p$ differential operators that act on automorphic forms over Shimura varieties of type A or C.  We show that, over the ordinary locus, these operators agree with the $\mod p$ reduction of the $p$-adic theta operators previously studied by some of the authors.  In the characteristic $0$, $p$-adic case, there is an obstruction that makes it impossible to extend the theta operators to the whole Shimura variety.  On the other hand, our $\mod p$ operators extend (``analytically continue'', in the language of de Shalit and Goren) to the whole Shimura variety.  As a consequence, motivated by their use by Edixhoven and Jochnowitz in the case of modular forms for proving the weight part of Serre's conjecture, we discuss some effects of these operators on Galois representations.

Our focus and techniques differ from those in the literature.
Our intrinsic, coordinate-free approach removes difficulties that arise from working with $q$-expansions and works in settings where earlier techniques, which rely on explicit calculations, are not applicable.  In contrast with previous constructions and analytic continuation results,
these techniques work for any totally real base field, any weight, and all signatures and ranks of groups at once, recovering prior results on analytic continuation as  special cases.

\end{abstract}

\maketitle
\vspace{-0.25in}

\setcounter{tocdepth}{1}

\section{Introduction}\label{intro-section}

Fix an odd prime number $p$.  This paper concerns certain mod $p$ differential operators that act on automorphic forms over Shimura varieties of types A and C.  Key motivation comes from the setting of modular forms and associated Galois representations.  Even though some of the ingredients present in the case of modular forms have been established in broad generality, extending them to higher rank setting involves various challenges.
\subsection{Context from the special case of $\GL_2$}\label{GL2-intro}
In 1977, Katz constructed a mod $p$ differential operator $\theta$ that acts on mod $p$ modular forms~\cite{Katz-theta}.  Given a mod $p$ modular form $f$, the operator $\theta$ acts on the $q$-expansion $f(q)= \sum_{n\geq 0} a_n q^n$ by
\begin{align}\label{qformula}
\theta f(q) = \sum_{n\geq 0} na_n q^n.
\end{align}
Mod $p$ modular forms of weight $k$ and level $N$, with $p\ndivides N$, arise as global sections of a certain line bundle $\omega^{\otimes k}$ over a curve $\mathcal{M}_N$ parametrizing elliptic curves with level $N$ structure.  That is, modular forms of weight $k$ and level $N$ are elements of $H^0\left(\mathcal{M}_N, \omega^{\otimes k}\right)$.  The operator $\theta$ is, \emph{a priori}, constructed as a map $H^0\left(\mathcal{M}^\ord_N, \omega^{\otimes k}\right)\rightarrow H^0\left(\mathcal{M}^\ord_N, \omega^{\otimes k+2}\right)$, i.e.\ only over the ordinary locus $\mathcal{M}^\ord_N$ of $\mathcal{M}_N$.  Given any modular form $f\in H^0\left(\mathcal{M}_N, \omega^{\otimes k}\right)$, however, Katz explains how to extend (``analytically continue'', in the terminology of~\cite{DSG2}) the section $\theta\left(f|_{\mathcal{M}^{\ord}_N}\right)$ to a classical modular form of weight $k+p+1$, i.e.\ an element of $H^0\left(\mathcal{M}_N, \omega^{\otimes k+p+1}\right)$ whose $q$-expansions are the same as those of $\theta\left(f|_{\mathcal{M}^{\ord}_N}\right)$.

 Associated to a cuspidal eigenform $f$ with Fourier expansion $f(q) = \sum_{n\geq 1}a_n q^n$, normalized so that $a_1 = 1$, there is a continuous, semi-simple representation
\begin{align}\label{char0repn}
  \rho_f\colon \gq\rightarrow \GL_2\left(\Qbar_p\right),
\end{align}
unramified at all primes $\ell$ coprime to $p$ and to the level of $f$, and satisfying
\begin{align*}
\Trace\left(\rho_f\left(\Frob_\ell\right)\right) = a_\ell
\quad\text{and}\quad 
\det\left(\rho_f\left(\Frob_\ell\right)\right) = \ell^{k-1}\psi(\ell)
\end{align*}
at such primes, where $k$ is the weight of $f$ and $\psi$ is the nebentypus of $f$~\cite{eichler, shimura1, shimura2, igusa, deligne1, deligne2, deligne-serre}. 
 Equivalently, using the description of Hecke operators on $q$-expansions, this statement can be reformulated in terms of Hecke eigenvalues in place of Fourier coefficients.
Serre's conjectures (formulated in~\cite{serreconj1, serreconj2} and proved in generality in~\cite{KW1, KW2, KW3} and in certain cases in~\cite{dieulefait1, dieulefait2}) associate to a representation
\begin{align*}
\rho\colon \gq\rightarrow \GL_2\left(\FF\right),
\end{align*}
with $\FF$ a finite field of characteristic $p$, a cuspidal eigenform $f$ such that $\overline{\rho}_f\cong \rho$, where $\overline{\rho}_f$ denotes the reduction  of $\rho_f$ mod $p$.
 Serre's conjectures also specify the weight and level of $f$.

The effect of iterations of $\theta$ (``theta cycles'') plays a key role in the proof of the weight part of Serre's conjecture~\cite{Edixhoven, jochnowitz}.
Via the action of Hecke operators $T_\ell$ on $q$-expansions, it is simple to observe that
\begin{align}\label{commrel-equ}
T_\ell \theta = \ell \theta T_\ell.
\end{align}
As a consequence, the effect of $\theta$ on the mod $p$ Galois representation attached to a mod $p$ eigenform $f$ is
\begin{align}\label{twist-equ}
\rho_{\theta f}  = \chi\otimes\rho_f,
\end{align}
where $\chi\colon \gq\rightarrow \FF_p^\times$ is the mod $p$ cyclotomic character.

Continuing with the case of $\GL_2$, Andreatta and Goren studied $\mod p$ theta operators in the case of Hilbert modular forms over a totally real field $L$. They also obtained several results on theta cycles in~\cite[Section 16]{AndreattaGoren}, building on earlier work of Jochnowitz~\cite{jochnowitz}, at least assuming $p$ remains inert in $L$ and the weight is parallel (although they also obtain preliminary results in the non-parallel case).

\subsection{The case of higher rank (especially unitary and symplectic) groups}\label{intro-newresults}
Given recent developments for automorphic forms on higher rank groups and given the significance of mod $p$ theta operators in the case of modular forms, it is natural to try to extend results on mod $p$ theta operators to the setting of higher rank groups.  In particular, one desires:
\begin{enumerate}
\item{A mod $p$ theta operator that is defined over the whole Shimura variety (and, ideally, the relationship with the mod $p$ reduction of the $p$-adic operators constructed over the ordinary locus, e.g.\ in~\cite{EDiffOps, EFMV, EiMa}).}
\item{Knowledge of the amount by which this mod $p$ theta operator increases the weight of an automorphic form.}
\item{Connections with Galois representations.}
\end{enumerate}

Since various ingredients exist in broad generality for higher dimensional groups, it is tempting to think one can handle all settings at once.  In fact, the arguments in the literature are delicate and have involved imposing restrictions specific to the cases at hand. 
To give the reader an idea of the state of the art prior to this paper, we briefly summarize the most recent developments (primarily from the past year):
\begin{itemize}
\item{{\bf Siegel modular forms (symplectic groups):} Yamauchi recently produced results for $\mod p$ Siegel modular forms, in the case of $\GSp_4$ over $\IQ$~\cite{Yama}.  His clever computations rely on the numerics of the case at hand, including conditions on the weights, and do not naturally generalize.}
\item{{\bf Unitary groups:} Via techniques relying on Fourier--Jacobi expansions, de Shalit and Goren achieved analytic continuation when the base real field is $\IQ$ (so the CM field associated to the unitary group is imaginary quadratic) and the weight is scalar in~\cite{DSG, DSG2}.  It is not immediately clear how to use this approach when the degree of the field extension is greater or the weight is non-scalar.  Furthermore, under these conditions, they achieved new, interesting results on theta cycles, which require that $p$ be inert in the imaginary quadratic field and the signature of the unitary group to be $(n, 1)$, and the numerics there are particular to these conditions being met.}
\end{itemize}

Our focus and techniques differ from those recently introduced in the literature, and the principal achievements of the present paper include removal of some of these conditions for construction and analytic continuation of theta operators.  Our intrinsic, coordinate-free approach avoids a need for $q$-expansions and works in settings where their techniques, which include explicit calculations, are not applicable.
In particular, for analytic continuation, our techniques immediately:
\begin{itemize}
\item{Work for any totally real base field $E$ (while their approaches require $E=\IQ$).}
\item{Allow all vector and scalar weights (in contrast to the restrictions above).}
\item{Handle all signatures and ranks of groups at once (whereas Yamauchi employs $\GSp_4$ and de Shalit--Goren's approach divides signatures into two cases).} 
 \end{itemize}
 When the ordinary locus is nonempty and their constraints are also met, our operators agree with theirs.  As explained in~\cite{EiMa2}, the present approach can also be extended (with additional work) to the case where the ordinary locus is empty.
Now that we have defined this operator in much more generality, we show that results of Katz from the case of modular forms (which can be viewed as preliminary results toward theta cycles) extend to Hermitian (i.e.\ unitary of signature $(n,n)$) and Hilbert--Siegel modular forms when the weight is parallel scalar but the base field can be of arbitrary degree.  We also achieve other nearly immediate results for Galois representations and set the stage for further developments in~\cite{EiMa2}.

Our proofs are entirely intrinsic to the geometry of the underlying Shimura varieties.   This allows us to overcome what might appear---from the setup for modular forms in Section~\ref{GL2-intro}---to be an obstruction to working with more general groups: We do not necessarily have $q$-expansions in our setting, thus depriving us of analogues of Equation~\eqref{qformula}.  By exploiting geometry, however, we obtain clean, intrinsic descriptions without reference to $q$-expansions (and without reference to Serre--Tate or Fourier--Jacobi expansions, which might at first appear to be reasonable substitutes but turn out not to be similarly suitable for studying the action of the Hecke algebras).

Our first result below concerns operators $\Diff^\lambda$, generalizing $\theta$, that have been previously constructed over the ordinary locus of unitary and symplectic Shimura varieties (e.g.\ in~\cite{EDiffOps, EFMV}, when the ordinary locus is nonempty, building on ideas introduced by Katz and Harris in~\cite{kaCM, hasv, ha86}). More precisely, in~\cite{EDiffOps, EFMV}, the operators $\Diff^\lambda$ are defined on $p$-adic forms over the ordinary locus. Here, we are concerned with their reduction modulo $p$, which provides an analogous weight-raising operator for $p$ sufficiently large (as explained in Remark~\ref{p>n} of the present paper).

Note that, in contrast to the conditions forced by the approaches of prior work in the literature, there are no restrictions on the degree of the base field and no restrictions on the weight of the automorphic forms here. As a consequence of Theorem~\ref{ana_thm} of the present paper, we obtain the following result.

\begin{customthm}{A}[Analytic continuation of mod $p$ differential operators]\label{continuationB}
In the symplectic and unitary cases, the differential operators $\Diff^\lambda$, defined \emph{a priori} only over the ordinary locus, can be analytically continued to the whole mod $p$ Shimura variety to give a differential operator $\Theta^\lambda$ on mod $p$ automorphic forms.
These operators specialize in the case of $\GSp_2= \GL_2$ to Katz's operator $\theta$ described in Equation~\eqref{qformula}.  
\end{customthm}

Similarly to Katz's case of raising the weight by $p+1$, our operators also raise the weight as specified in Theorem~\ref{charpcor}.  With these operators, we can reformulate Equations~\eqref{commrel-equ} and~\eqref{twist-equ} to obtain Theorem~\ref{charpcor}.  (Below, $\hat{G}$ denotes the dual group of $G$, $\hat{\nu}$ is the cocharacter dual to the similitude character $\nu$ of $G$, and  $\underline{k}$ is the parallel weight with entries all equal to $k\in\ZZ$.)

\begin{customthm}{B}[Action of differential operators on mod $p$ Galois representations]\label{charpcor}
  Let $G$ be a symplectic or unitary group over $\QQ$, split over a number field $F$.
Let $f$ be a mod $p$ Hecke eigenform on $G$ of weight $\kappa$, and let $\lambda$ be a weight that is admissible (as in Definition~\ref{admissible-defi}).  Assume $\Theta^\lambda(f)
$ is nonzero.  
Then $\Theta^\lambda (f)$ is a Hecke eigenform of weight $\kappa+\lambda+\underline{(p-1)|\lambda/2|}$. 

Furthermore, for  $\rho\colon\gf\to\hat{G}\left(\fpbar\right)$ a continuous representation, the Frobenius eigenvalues of $\rho$ agree with the Satake parameters of $f$ (as defined in Conjecture~\ref{conj:galois}) if and only if the Frobenius eigenvalues of
$(\hat{\nu}^{|\lambda|/2}\circ \chi)\otimes \rho$
agree with the Satake parameters of $\Theta^\lambda(f)$.   In particular, if $\rho$ is modular of weight $\kappa$, then $(\hat{\nu}^{|\lambda|/2}\circ \chi)\otimes \rho$ is modular of weight $\kappa+\lambda+\underline{(p-1)|\lambda/2|}$.
\end{customthm}

\begin{rmk}
  The reader who prefers to work with representations of $\gq$ instead of $\gf$ can do so by replacing the split version of the Satake isomorphism described in Section~\ref{AG-bkgd} (where we follow~\cite{Gross-satake}) with the non-split version from~\cite[Section 7]{treumannvenkatesh}.
  The Galois representations are then of the form $\gq\to\L{G}\left(\fpbar\right)$, taking values in the Langlands dual group of $G$.
\end{rmk}

\begin{rmk}
  In the classical $\GL_2$ case described in Section~\ref{GL2-intro}, passing from the commutation relation between differential and Hecke operators (Equation~\eqref{commrel-equ}) to the twisting effect on Galois representations (Equation~\eqref{twist-equ}) is immediate, given the well-known expression of the characteristic polynomial of Frobenius as an explicit quadratic polynomial.
  The same approach can be used in the $\GSp_4$ case~\cite{Yama}, where the characteristic polynomial of Frobenius can be written down as an explicit quartic polynomial.
  Beyond these two low rank cases, the explicit approach is impractical given that the polynomial for $\GSp_{2g}$ has degree $2^g$.
  We therefore adopt a more conceptual view and use the Satake isomorphism to establish the effect of differential operators on Galois representations in Theorem~\ref{charpcor}.
\end{rmk}

\begin{rmk}\label{katzfiltratiion-rmk}
  Katz discusses the \emph{exact filtration} of a $\bmod p$ modular form $f$~\cite[Section I]{Katz-theta}: the smallest $k$ such that $f$ is not divisible by the Hasse invariant, or equivalently, the smallest $k$ such that there is no modular form of weight $k'<k$  whose $q$-expansion at some cusp agrees with the $q$-expansion of $f$.  The notion of exact filtration is related to questions about the kernel of the differential operator (times the Hasse invariant) and to the proof of the weight part of Serre's conjecture, as described in, e.g.~\cite[Section 3]{Edixhoven}.  As an indication of potential future applications of Theorem~\ref{charpcor}, Section~\ref{final-effects-section} contains some remarks on the effect of the differential operator on the exact filtration, in the special case of Siegel or Hermitian forms of parallel weight.
\end{rmk}

\begin{rmk}
As noted above, when working with unitary groups, we assume the ordinary locus is nonempty.  Fully extending our results to the case where the ordinary locus is empty, the \emph{$\mu$-ordinary} setting, requires substantial additional work and is the subject of~\cite{EiMa2}.  The $\mu$-ordinary case is also addressed, via different techniques, when the base field is $\IQ$ (so the field associated to the unitary group is quadratic imaginary) and the weight is scalar in~\cite{DSG, DSG2}.
\end{rmk}

\begin{rmk}
The differential operator $\theta$ on classical modular forms can be constructed via several other methods, some of which have been generalized to certain groups of higher rank.  The interested reader can consult the survey in~\cite{ghitza_rims}.
\end{rmk}

\subsection{Obstructions to analytic continuation in characteristic $0$}\label{char0-section}

Since the representation~\eqref{char0repn} lives in characteristic $0$, it is natural to ask about the possibility of lifting Theorems~\ref{continuationB} and~\ref{charpcor} from the mod $p$ setting to the $p$-adic setting.

As an intermediate step, one might try to lift to characteristic $p^m$ for $m>1$ an integer. 
Chen and Kiming have accomplished this for mod $p^m$ modular forms $f$, and they have proved that if $f$ has weight filtration $k$, then $\theta f$ has weight filtration $k+2+2p^{m-1}(p-1)$~\cite[Theorem 1]{chen-kiming}.  This shows that, for modular forms over $\ZZ_p$, there is no way to analytically continue the operator $\theta$ from the ordinary locus to the entire Shimura variety over $\ZZ_p$.  Indeed, if it were possible to analytically continue $\theta$, then the weight filtration of $\theta f$ mod $p^m$ would be bounded above by the weight of the characteristic $0$ form $\theta f$; but as Chen and Kiming have shown, the weight filtration of $\theta f$ mod $p^m$ is unbounded as $m$ goes to infinity.

Observing this obstruction to analytically continuing the entire form in characteristic $0$, one might try just to analytically continue the \emph{ordinary} part of $\Diff_{\kappa}^\lambda f$.  More precisely, let $\ee$ denote Hida's ordinary projector (defined in~\cite[Sections 6--8]{H02}), which acts on the space of $p$-adic automorphic forms.  
A $p$-adic automorphic form is \emph{ordinary} if it is in the image of $\ee$.  Every ordinary $p$-adic automorphic form of sufficiently regular
 algebraic weight is, in fact, an ordinary classical automorphic form defined over the whole Shimura variety $\Shimuravariety$, by~\cite[Theorems 6.8(4) and 7.1(4)]{H02}.  So even though $\Diff_{\kappa}^\lambda f$ cannot be extended to all of $\Shimuravariety$, we at least have that $\ee\Diff_{\kappa}^\lambda f$ extends to all of $\Shimuravariety$.

As we shall see in Corollary~\ref{ed0}, though, the proof of Theorem~\ref{charpcor} (an analysis of the interaction between Hecke operators and differential operators) also shows that $\ee\Diff_{\kappa}^\lambda = 0$.  So the ordinary part of $\Diff_{\kappa}^\lambda f$ is $0$.  In the case of modular forms in characteristic $0$, Coleman, Gouv\^ea, and Jochnowitz also showed in~\cite[Corollary 10]{CGJ} that $\theta$ destroys overconvergence, due to a relationship with the weight $2$ Eisenstein series $E_2$ (a relationship that actually enables the mod $p^m$ liftings in~\cite{chen-kiming}).

\begin{rmk}
Readers who are primarily familiar with the characteristic $0$, $p$-adic setting should be careful not to equate the objectives of this paper with recent work on overconvergence, which concerns extending modular forms past the ordinary locus (but not to the whole moduli space) as a step toward extending certain constructions of $p$-adic $L$-functions.  As the above shows, in characteristic 0, there is no hope of extending our operator to the whole Shimura variety.  Furthermore, recent work on overconvergence and theta operators has primarily focused on $\GL_2$, whereas we are interested in higher rank groups. \end{rmk}

\subsection{Organization}
The meat of the paper is in Sections~\ref{analytic_sec} and~\ref{Hecke_sec}.  First, though, Section~\ref{prelim_sec} introduces necessary conventions and background (i.e.\ previously established results and machinery on which our present paper relies), divided into two parts.    The  first focuses on Hecke algebras and Galois representations. The second provides  background on Shimura varieties and automorphic forms (including Hasse invariants, a key ingredient in the analytic continuation of differential operators).

Section~\ref{analytic_sec} establishes the analytic continuation of the modulo $p$ reduction of $p$-adic differential operators.  Those operators were previously constructed over the ordinary locus in~\cite{EDiffOps, EFMV}.  The key work in this section lies in producing another construction of differential operators in characteristic $p$ that is defined over the whole Shimura variety and then showing that it agrees with the mod $p$ reduction of the $p$-adic differential operators we originally constructed.  Theorem~\ref{continuationB} is a consequence of the main result of this section, Theorem~\ref{ana_thm}, which demonstrates the role of Hasse invariants in analytically continuing differential operators, in analogy with Katz's approach in~\cite{Katz-theta}.  

Section~\ref{Hecke_sec} describes the interaction between differential operators and Hecke operators.  Our proofs rely entirely on the geometry of Shimura varieties and do not require $q$-expansions.
Theorem~\ref{TandTheta} shows that the differential operators commute with prime-to-$p$ Hecke operators up to scalar multiples, in analogy with Equation~\eqref{commrel-equ}, a key step toward describing the effect on Galois representations.  
  
Section~\ref{galois-section} establishes consequences of the previous two sections, applied to Galois representations.  The section begins, though, with an axiomatic result, Theorem~\ref{axiomaticthm}, on Galois representations and associated Hecke eigenvalues for general reductive groups.
Theorem~\ref{charpcor} is a consequence of Theorem~\ref{axiomaticthm}, as the relevant conditions hold by Theorem~\ref{TandTheta}.  We conclude by showing that some results on iterations of theta (preliminary results on theta cycles) from modular forms naturally extend to Hilbert--Siegel and Hermitian modular forms when the weight is parallel scalar, without restriction on the degree of the base field.

\subsection{Acknowledgements}
The third and fifth named authors thank Dick Gross, Arun Ram, Olav Richter, and Martin Weissman for helpful suggestions.  The first named author thanks Caltech for hospitality during a visit to work on this project.
We are very grateful to the referee for their careful reading and many useful suggestions, which helped improve the paper.

\section{Background and conventions}\label{prelim_sec}

This section recalls key background information about Galois representations (Section~\ref{AG-bkgd}) and associated automorphic forms over Shimura varieties (Section~\ref{AF-bkgd}).

\subsection{Hecke algebras and Galois representations}\label{AG-bkgd}

We recall properties of algebraic groups, Hecke algebras, and Galois representations associated with systems of Hecke eigenvalues. 
{For clarity,  here and in Section~\ref{AT-sec} we work with a general connected reductive group $G$, although in the rest of the paper we deal exclusively with $G$  either symplectic or unitary (as in Section~\ref{AF-bkgd}).}

Our main reference is~\cite{Gross-satake}, with some aspects borrowed from~\cite{treumannvenkatesh} and~\cite{buzzgee14}.

\subsubsection{Dual group}
Let $G$ be a connected reductive algebraic group over a field $F$.
\textbf{We assume that $G$ is split over $F$}, that is there exists a maximal torus of $G$ that is isomorphic over $F$ to a product of copies of $\gm$.

We fix a split maximal torus $T$ contained in a Borel subgroup $B$ of $G$, all defined over $F$.
The Weyl group of $T$ is $W=N_G(T)/T$.
Let $\xub=\xub(T)=\Hom\left(T_{\Fbar},\gm\right)$ be the character lattice and $\xlb=\xlb(T)=\Hom\left(\gm, T_{\Fbar}\right)$ be the cocharacter lattice.
There is a natural pairing
\begin{equation*}
  \langle,\rangle\colon \xub\times\xlb\to\Hom(\gm,\gm)\cong\ZZ
\end{equation*}
given by $\langle\chi,\mu\rangle=\chi\circ\mu$.

Given a representation of $G$, its restriction to $T$ is a direct sum of characters (the \emph{weights} of the representation).
The \emph{roots} of $G$ are the nontrivial weights of the adjoint representation $\mathrm{Ad}\colon G \rightarrow \Aut(\Lie(G))$.
If one instead considers the adjoint action on $\Lie(B)$, one obtains the \emph{positive roots}.
The set $\dub$ of simple roots consists of those positive roots that cannot be written as the sum of other positive roots.
The coroot basis $\dlb$ is the set of simple coroots.
The \emph{root datum} of $G$ is the quadruple $\Psi(G)=(\xub,\dub,\xlb,\dlb)$.

We consider $\hat{G}$, the dual group of $G$, viewed as a reductive group over $k$, a field to be specified later.
(In the automorphic literature, $k$ is often taken to be $\CC$, but our main focus will be on $\fpbar$.
For the existence of $\hat{G}$ over a general base, see~\cite[Theorem 6.1.16]{conrad-reductive}.)
After fixing $\hat{T}\subset\hat{B}\subset\hat{G}$, there is an identification $\xub(\hat{T})\cong \xlb(T)$ that maps the positive roots with respect to $\hat{B}$ to the positive coroots with respect to $B$.
The root datum of $\hat{G}$ is ${\Psi(G)}^\vee=(\xlb,\dlb,\xub,\dub)$.

Under the duality between $G$ and $\hat{G}$, a character $\chi\colon T\to\gm$ in $\xub(T)$ corresponds to a cocharacter $\hat{\chi}\colon\gm\to\hat{T}$ in $\xlb(\hat{T})=\xub(T)$.
Similarly, the character of $\hat{T}$ corresponding to $\mu\in\xlb(T)$ is denoted $\hat{\mu}$.
Note that, given $\chi\in\xub(T)$ and $\mu\in\xlb(T)$, the two morphisms
\[\chi\circ\mu\colon\gm\to T\to\gm \quad\text{and}\quad  \hat{\mu}\circ\hat{\chi}\colon\gm\to\hat{T}\to\gm \]
agree, i.e. $\chi\circ\mu=\hat{\mu}\circ\hat{\chi}$.

Let
\begin{equation*}
  P^+ := \{\chi\in\xlb\mid\langle\alpha,\chi\rangle\geq 0\text{ for all }\alpha\in\dub\}.
\end{equation*}
The elements of $P^+$ are called 
the \emph{dominant weights} of (the maximal torus $\hat{T}$ of) $\hat{G}$. 
There is a partial ordering $\geq$ on $P^+$, where $\lambda\geq\mu$ if 
\begin{equation*}
  \lambda-\mu=\sum_{\alpha^\vee\in\dlb} n_{\alpha^\vee} \alpha^\vee
  \quad\text{with }n_{\alpha^\vee}\in\ZZ_{\geq 0}.
\end{equation*}

By Chevalley's theorem~{\cite[Corollary 2.7 of Part II]{Jantzen}}, 
the set $P^+$ is in bijection with the set of irreducible finite dimensional $\hat{G}$-modules over $k$. For any $\lambda\in P^+$, we denote by $(\rho_\lambda,V_\lambda)$ the irreducible $\hat{G}$-module of highest weight $\lambda$.

Writing $R(\hat G)$ for the representation ring of $\hat G$, we have an identification~\cite[Section 1]{Gross-satake}
\begin{equation*}
  R(\hat{G}) \cong {\ZZ[X^\bullet(\hat{T})]}^W \cong {\ZZ[X_\bullet(T)]}^W,
\end{equation*}
where $W$ acts on $X_\bullet(T)$ by postcomposition and on $X^\bullet(\hat T)$ by the identification $\xub(\hat{T})\cong \xlb(T)$ above.
Using this we can view the elements of the representation ring as sums of characters of $\hat{T}$ or cocharacters of $T$.
Given $\lambda \in P^+$,  let $\chi_\lambda := \Tr(\rho_\lambda|_{\hat T}) \in \ZZ[\xub(\hat{T})]$ denote the character of the irreducible representation $\rho_\lambda\colon \hat{G}\to\GL(V_\lambda)$ of highest weight $\lambda$.
Using the above, we can express
\begin{equation*}
  \label{eq:char_sum}
  \chi_\lambda = \sum_{\mu \leq \lambda} (\dim V_\lambda(\mu)) \hat{\mu},
\end{equation*}
where $\hat{\mu} \in\xub(\hat{T})$ is the character corresponding to $\mu \in\xlb(T)$ by duality, and $V_\lambda(\mu)$ is the weight space of weight $\mu$ in $V_\lambda$.
For further discussion see the remark after~{\cite[Corollary 2.7 of Part II]{Jantzen}}.

\subsubsection{Local Hecke algebra and Satake isomorphism}\label{Hecke-Algs-prelim}
We now take $G$ to be a connected split reductive group over a local field $F_v$ with ring of integers $\oh_v$.
We choose a uniformizer $\pi_v$ of $\oh_v$, and we let $q_v$ denote the cardinality of the residue field $\oh_v/\pi_v\oh_v$.

As we assume $G$ to be split, there exists a group scheme $\mathcal{G}$ over $\oh_v$ whose generic fiber is $G$ and whose special fiber is reductive.
We set $G_v=\mathcal{G}(F_v)$ and $K_v=\mathcal{G}(\oh_v)$, then $K_v$ is a hyperspecial maximal compact subgroup of the locally compact group $G_v$.

Given a commutative ring $R$, the $R$-valued Hecke algebra of the pair $(G_v, K_v)$ is
\begin{equation*}
  \mathcal{H}(G_v, K_v; R)
  =\left\{h\colon K_v\backslash G_v/K_v\longto R \left|\!\begin{array}{c}h\text{ locally constant,}\\
      \text{compactly supported}\end{array}\!\right. \right\}
\end{equation*}
with multiplication given by
\begin{equation*}
  (h_1 * h_2) (K_v g K_v)
  =\sum_{xK_v\in G_v/K_v} h_1(K_v x K_v) h_2(K_v x^{-1}g K_v).
\end{equation*}
We work with the basis of $\mathcal{H}(G_v, K_v; R)$ consisting of the characteristic functions
\begin{equation*}
  c_\lambda = \mathrm{char}\left(K_v\lambda(\pi_v)K_v\right)\qquad
  \text{for }\lambda\in P^+.
\end{equation*}

The Satake transform is a ring isomorphism
\begin{equation*}
  \mathcal{S}_v\colon \mathcal{H}\left(G_v,K_v;\zqv\right)
  \longto {\zqv\left[\xub(\hat{T})\right]}^W=R\left(\hat{G}\right)\otimes\zqv.
\end{equation*}
If $\lambda\in P^+$ then the image of the basis element $c_\lambda$ can be written\footnote{The element $\rho$ is the half-sum of the positive roots of $G$, but we will not need to know this, only that it is the same in all the identities related to the Satake isomorphism $\mathcal{S}_v$.}
\begin{equation}
\label{eq:satake}
  \mathcal{S}_v(c_\lambda)=
  \sum_{\mu\leq\lambda} b_\lambda(\mu)q_v^{\langle\rho,\mu\rangle}\chi_\mu,
\end{equation}
where $\mu$ runs over the elements in $P^+$ such that $\mu\leq\lambda$, $b_\lambda(\mu)\in\ZZ$ and $b_\lambda(\lambda)=1$.

There is a similar identity expressing $\chi_\lambda$ in terms of images of characteristic functions:
\begin{equation*}
  \chi_\lambda = q_v^{-\langle \rho,\lambda\rangle}
  \sum_{\mu\leq\lambda} d_\lambda(\mu)\mathcal{S}_v(c_\mu),
\end{equation*}
where $d_\lambda(\mu)\in\ZZ$ and $d_\lambda(\lambda)=1$.
For future use, let us record a consequence of this identity:
\begin{equation}
  \label{eq:satake_inv}
  \mathcal{S}_v^{-1}(\chi_\lambda) = q_v^{-\langle \rho,\lambda\rangle}
  \sum_{\mu\leq\lambda} d_\lambda(\mu)c_\mu.
\end{equation}

In most of the paper we work with $\fpbar$-coefficients, for a fixed prime $p$.
If $v$ does not divide $p$, then, after making a choice of square root\footnote{For the subtleties of this choice of square root, look for local and global pseudoroots in~\cite[Section 7]{treumannvenkatesh}.} of $q_v$ in $\fpbar$, we get a mod $p$ version of the Satake transform by tensoring with $\fpbar$:
\begin{equation*}
  \mathcal{S}_v\colon\mathcal{H}(G_v,K_v;\fpbar)\longto {\fpbar[\xub(\hat{T})]}^W=R(\hat{G})\otimes \fpbar.
\end{equation*}
Equation~\eqref{eq:satake_inv} continues to hold in this setting, with $d_\lambda(\mu)\in\fp$ and $d_\lambda(\lambda)=1$.

\subsubsection{Galois representations associated to automorphic forms mod $p$}\label{Galois-bkgd}

We summarize the (conjectural) correspondence between Galois representations and automorphic representations following~\cite{buzzgee14}.

Let $G$ be a connected reductive group over $\QQ$ and let $F / \QQ$ be the extension over which $G$ splits.
Henceforth we let $G$ refer to the base change $G \times_{\Spec~\QQ} \Spec~F$.
Fix a prime $p$.
Given a level, i.e.\ an open compact subgroup $K=\prod_v K_v \subset G(\AA_f)$, we say that a finite place $v$ of $F$ is \emph{bad} (with respect to $p$ and $K$) if $v$ lies above $p$ or if $K_v$ is \emph{not} a hyperspecial maximal compact subgroup of $G_v=G(F_v)$.
Otherwise we say that $v$ is \emph{good}, in which case we are in the situation described in Section~\ref{Hecke-Algs-prelim}.
All but finitely many places are good with respect to $p$ and $K$.

Let $f$ be a Hecke eigenform mod $p$ of level $K$ on $G$, that is a class in the cohomology with $\fpbar$-coefficients of the locally symmetric space defined by $G$ and $K$ (see~\cite[Section 5]{treumannvenkatesh} for details).
(NB:\ The examples of interest to this paper are the automorphic forms coming from Shimura varieties, as defined in Section~\ref{sect:automorphic}.)

For every good place $v$, $f$ defines a character of the local Hecke algebra $\mathcal{H}(G_v, K_v;\fpbar)$, namely the \emph{Hecke eigensystem}
\begin{equation*}
  \Psi_{f,v}\colon \mathcal{H}(G_v,K_v;\fpbar)\to \fpbar
\end{equation*}
which takes an operator $T$ to its eigenvalue:
\begin{equation*}
  Tf = \Psi_{f,v}(T)f.
\end{equation*}
Using the Satake isomorphism $\mathcal{S}_v$ from Section~\ref{Hecke-Algs-prelim}, we can define a character $\omega_f\colon R(\hat{G})\otimes \fpbar\to \fpbar$ by
\begin{equation*}
  \omega_f(\chi_\lambda)=\Psi_{f,v}\left(\mathcal{S}_v^{-1}(\chi_\lambda)\right).
\end{equation*}
The characters of the representation ring $R(\hat{G}) \otimes \fpbar$ are indexed by the semi-simple conjugacy classes in $\hat{G}(\fpbar)$.
Given such a class $s$, the corresponding character $\omega_s$ is determined by
\begin{equation*}
  \omega_s(\chi_\lambda)=\chi_\lambda(s).
\end{equation*}
In particular, the character $\omega_f$ corresponding to $f$ is indexed by the conjugacy class $s_{f,v}$ of some semisimple element of $\hat{G}(\fpbar)$; we call $s_{f,v}$ the \emph{$v$-Satake parameter} of $f$.

We are ready to state the expected relation between Hecke eigenforms and Galois representations:
\begin{conj}[Positive characteristic form of{~\cite[Conjecture 5.17]{buzzgee14}}]\label{conj:galois}
  Let $f$ be a mod $p$ Hecke eigenform of level $K$ on a connected reductive group $G$ over $\QQ$, split over a number field $F$.
  There exists a continuous representation 
  \begin{equation*}
    \rho\colon \gf\longto\hat{G}(\fpbar)
  \end{equation*}
  that is unramified outside the finite set $\Sigma$ of places that are bad with respect to $p$ and the level 
  $K$.
  If $v\notin\Sigma$ is a finite place of $F$, then $\rho(\Frob_v)=s_{f,v}$, the $v$-Satake parameter of $f$.
\end{conj}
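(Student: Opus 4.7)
Since this is a conjecture rather than a theorem, I can only describe the strategy known to work in the automorphic situations of interest to this paper (namely $G$ symplectic or unitary), where the characteristic-zero Langlands correspondence is well-developed enough to be exploited via a lift-and-reduce argument; the fully general statement remains open, as it is a positive-characteristic form of global Langlands reciprocity for arbitrary reductive $G$.

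The first step is to realize the mod $p$ Hecke eigenform $f$ geometrically, as a section (or a coherent cohomology class) of an automorphic vector bundle on the mod $p$ Shimura variety attached to $G$ and $K$, in the framework of Section~\ref{AF-bkgd}. The second step is to lift the Hecke eigensystem to characteristic zero: via a Deligne--Serre style argument applied to a characteristic-zero Hecke algebra acting faithfully on an integral cohomology module, or via Hida's control theorem when $f$ admits an ordinary lift, producing a classical eigenform $\tilde f$ (possibly of larger algebraic weight than $f$) whose Hecke eigenvalues at every good place reduce modulo $p$ to those of $f$.

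The third step is to attach to $\tilde f$ a continuous Galois representation $\tilde\rho\colon \gf\to \hat G(\Qbar_p)$ using the known constructions of Galois representations for cohomological automorphic representations on symplectic or unitary groups (built from the \'etale cohomology of the relevant Shimura variety, in the vein of Kottwitz, Harris--Taylor, Shin, and their extensions). After conjugating into $\hat G(\CO)$, where $\CO$ is the ring of integers of a suitable finite extension of $\QQ_p$, reduce modulo the maximal ideal and semisimplify to obtain $\rho\colon \gf\to \hat G(\fpbar)$, unramified outside the bad set $\Sigma$.

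The final step is to verify local--global compatibility at each good place $v$. Characteristic-zero theory gives $\tilde\rho(\Frob_v)$ conjugate to the $v$-Satake parameter $s_{\tilde f,v}$ in $\hat G(\Qbar_p)$; since both $\mathcal{S}_v$ and its inverse~\eqref{eq:satake_inv} have coefficients in $\zqv$ once a square root of $q_v$ is fixed as in Section~\ref{Hecke-Algs-prelim}, this identification descends modulo $p$ to the desired equality $\rho(\Frob_v)=s_{f,v}$. The hard part will be the lifting step: producing a single characteristic-zero eigenform whose eigenvalues simultaneously match those of $f$ at every good place. Outside the ordinary or regular-algebraic situation this is delicate, and it is precisely the point at which the general conjecture resists a purely geometric proof.
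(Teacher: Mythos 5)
This statement is a conjecture; the paper does not prove it, and in fact treats it purely as a hypothesis (it defines $R(f)$ as the possibly empty, possibly non-singleton set of representations attached to $f$, and cites Scholze's theorem for $\GL_n$ as the current state of the art). So there is no proof in the paper to compare yours against, and you are right to frame your text as a strategy sketch rather than a proof. Judged as a strategy sketch, however, it has a genuine gap beyond the one you flag.

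The gap is in your second step. The eigenform $f$ in this conjecture is a class in the cohomology with $\fpbar$-coefficients of the locally symmetric space attached to $G$ and $K$, and mod $p$ Hecke eigensystems occurring there need \emph{not} lift to characteristic zero: torsion classes in the integral cohomology produce eigensystems that are not reductions of any classical eigenform, and this failure is not confined to non-ordinary or non-regular situations. A Deligne--Serre argument applies to a Hecke algebra acting faithfully on a torsion-free integral module in a single degree (e.g.\ weight-one holomorphic forms in degree-zero coherent cohomology); it does not apply to torsion in Betti cohomology, which is precisely the new phenomenon the conjecture is designed to capture. This is why Scholze's proof for $\GL_n$ does not proceed by lifting individual eigenforms at all: it attaches determinants (pseudo-representations) directly to the torsion Hecke algebra, using the perfectoid Shimura variety and the Hodge--Tate period map to relate torsion Betti classes to coherent cohomology where Galois representations can be glued $p$-adically. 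So even in the unitary case your outline does not recover the known results, and your closing caveat that the lifting step is ``delicate'' understates the situation: it is false in general. A secondary issue, worth naming, is that in the symplectic case the target is $\hat{G}=\GSpin_{2n+1}$, and producing representations valued in the full dual group (rather than merely in $\GL_N$ via the spin or standard representation) is itself a nontrivial refinement that your third step elides.
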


As noted in~\cite[Remark 5.19]{buzzgee14}, the representation $\rho$ need not be unique up to conjugation.
Given a Hecke eigenform $f$, we will denote by $R(f)$ the set of Galois representations attached to $f$ as in Conjecture~\ref{conj:galois}.

\begin{rmk}
The reader looking to reconcile the statement of Conjecture~\ref{conj:galois} with that of~\cite[Conjecture 5.17]{buzzgee14} will note the following differences:
\begin{itemize}
      \item We work with Galois representations mod $p$ rather than $p$-adic.
      \item Our assumption that $G$ is split over $F$ means that the Galois representations land in the dual group $\hat{G}$ rather than the $L$-group $\L{G}$.
      \item We relate the Hecke eigensystem of $f$ with the images of Frobenii by using Satake parameters (as defined above) rather than by comparing representations of Weil groups.
      \item Since Buzzard and Gee work in the more general setting of automorphic representations, they need to impose the condition of $L$-algebraicity, which is automatically satisfied in the setting of our automorphic forms.
    \end{itemize}
\end{rmk}

For the current state of the art regarding Conjecture~\ref{conj:galois}, we have great achievements by Scholze in~\cite{scholze} for the case $G = \GL_n$.
(The conjectural correspondence in this setting was first formulated by Ash in~\cite{ash-galois}.)
Scholze's work gives the following.
\begin{thm}[{\cite[Theorem I.3]{scholze}}]\label{modpL}
  If $F$ is a CM field that contains an imaginary quadratic field (or more generally, assuming the statements in~\cite{arthur}, if $F$ is totally real or CM), then for any system of Hecke eigenvalues occurring in the singular cohomology $H^i\left(X_K, \fp\right)$ of the locally symmetric space $X_K$ attached to $\GL_n$ over $F$, there is a continuous semisimple representation $\gf\to\GL_n(\fpbar)$ whose Frobenius eigenvalues agree with that system of Hecke eigenvalues.
\end{thm}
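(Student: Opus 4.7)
The plan is to reduce the torsion statement to the well-established characteristic zero case via a congruence argument, using the geometry of Shimura varieties attached to a larger unitary group in which the locally symmetric space for $\GL_n$ can be detected. In characteristic zero, Galois representations attached to regular algebraic cuspidal automorphic representations of $\GL_n$ over a CM field are known by the work of Harris--Lan--Taylor--Thorne and Shin, together with earlier contributions by Clozel, Kottwitz, and others; this handles Hecke eigensystems that arise from classes lifting to $\fpbar$-coefficients in a ``classical'' way. The real content is thus to show that every Hecke eigensystem $\Psi$ occurring in $H^i(X_K,\fp)$ can be approximated, modulo $p$, by a Hecke eigensystem that does come from a characteristic zero cohomology class; once such an approximation is produced, semisimple Galois representations can be glued together in the standard way (pseudocharacters, Chebotarev density) to attach a representation to $\Psi$ itself.

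To produce the approximation, I would follow Scholze's strategy and embed the locally symmetric space $X_K$ for $\GL_n/F$ into the boundary of a suitable Shimura variety $\Sh_{K'}$ attached to a unitary similitude group containing a Levi isomorphic (up to center) to $\mathrm{Res}_{F/\QQ}\GL_n$. The Borel--Serre-type boundary stratification then yields a Hecke-equivariant map from $H^*(X_K,\fp)$ into the cohomology of $\Sh_{K'}$ with torsion coefficients, so the problem is transferred to understanding Hecke eigensystems in $H^i(\Sh_{K'},\fp)$.

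The heart of the argument is then to show that every Hecke eigensystem on $H^i(\Sh_{K'},\fp)$ lifts, modulo $p$, to one coming from a classical cuspidal automorphic form on the unitary group. For this I would use the perfectoid Shimura variety at infinite level $\Sh_{K'_p}$ constructed by Scholze, together with the Hodge--Tate period map
\[
\pi_{\mathrm{HT}}\colon \Sh_{K'_p}\longto \mathcal{F}\ell,
\]
where $\mathcal{F}\ell$ is the appropriate flag variety. The key property is that $\pi_{\mathrm{HT}}$ is Hecke-equivariant for the prime-to-$p$ Hecke algebra (which acts trivially on $\mathcal{F}\ell$) and that higher direct images along $\pi_{\mathrm{HT}}$ can be computed in terms of automorphic vector bundles. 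Pushing a torsion cohomology class through $\pi_{\mathrm{HT}}$ and invoking an almost-vanishing theorem for the cohomology of affinoid perfectoid subsets of the flag variety, one shows that the associated Hecke eigensystem must already appear on some automorphic vector bundle with sufficiently regular algebraic weight, hence in the cohomology of a classical automorphic sheaf where Hecke eigensystems are known to come from characteristic zero cuspidal forms after a controlled weight shift.

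The hard part, and the place where all the real work lives, is the last step: controlling the weight of the classical lift and ensuring that the Hecke eigensystem one produces in characteristic zero genuinely reduces to $\Psi$ modulo $p$. This requires the full strength of Scholze's vanishing results for the completed cohomology at infinite level, careful use of the Hodge--Tate filtration to relate $p$-adic and algebraic weights, and an approximation argument that trades arbitrarily small $p$-adic perturbations of the eigenvalues for an enlargement of the level away from $p$; the Chebotarev/pseudorepresentation gluing at the end then gives the desired continuous semisimple $\rho\colon\gf\to\GL_n(\fpbar)$ whose Frobenius eigenvalues match $\Psi$ at every good place.
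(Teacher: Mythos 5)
This statement is not proved in the paper at all: it is quoted verbatim as Theorem I.3 of Scholze's work on torsion in the cohomology of locally symmetric varieties, and the paper's only ``proof'' is that citation. Your outline accurately tracks Scholze's published strategy (embedding $X_K$ into the boundary of a unitary Shimura variety, the perfectoid Shimura variety at infinite level, the Hodge--Tate period map, the vanishing theorems, and the pseudorepresentation/Chebotarev gluing), so there is no conflict with the paper --- but be aware that what you have written is a roadmap of a roughly 200-page argument, not a proof: every step you label as ``the hard part'' or ``where all the real work lives'' is precisely the content of Scholze's theorem, and none of it is carried out here. For the purposes of this paper, the correct thing to do is exactly what the authors do, namely cite the result.
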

The method involves recognising the cohomology of $X_K$ within the cohomology of a Shimura variety attached to a symplectic or unitary group, which are our primary cases of interest.
For results directly in those settings, see~\cite{kretshin} and~\cite{chenevierharris}, respectively.

\subsection{Shimura varieties and automorphic forms}\label{AF-bkgd}

This section introduces the Shimura varieties and spaces of automorphic forms with which we will work.

\subsubsection{Shimura varieties}
In this paper, we consider PEL-type Shimura varieties of either unitary (A) or symplectic (C) type.  Here,
we briefly introduce the Shimura datum $\left(D, *, V, \langle,\rangle, h\right)$ of PEL-type needed for our work later in the paper.  
We refer to~\cite{kottwitz} for a more detailed treatment.  

Let $D$ be a finite-dimensional simple $\QQ$-algebra, and let $F$ be its center.
Let $*$ be a positive involution on $D$ over $\QQ$, and let $F_0$ be the fixed field of $*$ on $F$.
Since $*$ is positive on  $F$, its fixed field $F_0$ is totally real.  We say that $*$ is of the first kind if $F=F_0$, and of the second kind if $F$ is a quadratic imaginary extension of $F_0$ (in which case $F$ is a CM field).  

In the following, we denote by $\TT_{F_0}$ (resp. $\TT_F$) the set of embeddings $\tau\colon F_0\to \RR$ (resp. $\tau\colon F\to \CC$).  If $*$ is of the second kind (case (A)), then for each $\tau\in\TT_{F_0}$, we fix an extension $F\to \CC$ in $\TT_F$, which by abuse of notation, we still denote by $\tau$,  and we write $\tau^*$ for the other embedding $F\to \CC$ that restricts to $\tau$ on $F_0$.  We shall also fix a choice $\Sigma_F$ of CM type, i.e.\ a set consisting of exactly one of $\tau, \tau^\ast$ for each $\tau\in \TT_F$.

 Let $V$ be a non-zero finitely-generated left $D$-module, and let $\langle ,\rangle$ be a non-degenerate $\QQ$-valued alternating form on $V$ such that $\langle bv,w\rangle=\langle v,b^*w\rangle$, for all $v,w\in V$ and all $b\in D$.
Let $C$ be the $\QQ$-algebra ${\rm End}_D(V)$; it is a simple algebra with center $F$, and has an involution $*$ coming from the form $\langle,\rangle$.
Let $h\colon\CC\to C_\RR$ be a $*$-homomorphism such that the symmetric real-valued bilinear form $\langle \cdot,h(i)\cdot\rangle$ on $V_\RR$ is positive-definite.

Associated with the above data, we define the algebraic group $G$ over $\QQ$ whose points on a $\QQ$-algebra $R$ are given by 
\[G(R):=\{x\in C\otimes_\QQ R\mid xx^*\in R^\times\}.\]
We denote by $\nu\colon G\to \gm$ the morphism  $x\to xx^*$, called the \emph{similitude factor}, by $G_1$ its kernel, and by $\hat{\nu}\colon\gm\to \hat{G}$ the cocharacter of $\hat{G}$ corresponding to $\nu$ under duality.  (Note that for any integer $m\geq 1$, the cocharacter of $\hat{G}$ dual to $\nu^m$ is $\hat{\nu}^m$.)
Then $G_1$ is obtained from an algebraic group $G_0$ over $F_0$ by restriction of scalars from $F_0$ to $\QQ$. If $*$ is of the second kind---case (A)---then $G_0$ is an inner form of a quasi-split unitary group over $F_0$. If $*$ is of the first kind, then over an algebraic closure of $F_0$, the group $G_0$ is either orthogonal---case (D)---or symplectic---case (C). 
\textbf{Going forward, we assume we are in case (A) or (C).}

The endomorphism $h_\CC=h\times_\RR \CC$ of $V_\CC=V_\RR\otimes \CC=V\otimes_\QQ \CC$ gives rise to a decomposition  $V_\CC=V_1\oplus V_2$, where for all $z\in \CC$, 
$(h(z),1)=h(z)\times 1$
acts by $z$ on $V_1$ and by $\bar{z}$ on $V_2$. The \emph{reflex field} $E$ of the Shimura datum $\left(D, *, V, \langle,\rangle, h\right)$ is the field of definition of the $G(\CC)$-conjugacy class of $V_1$.

Let $p$ be a rational prime. We write $\ZZ_{(p)}$ for the localization of $\ZZ$ at $p$.  We choose our data so that the following conditions are met:
\begin{enumerate}
\item The prime $p$ is unramified in $F$.
\item The algebra $D$ is split at $p$, i.e. $D_{\QQ_p}$ is a product of matrix algebras over (unramified) extensions of $\QQ_p$.
\item There exists a $\ZZ_{(p)}$-order $\CO_D$ in $D$ that is preserved by $*$ and whose $p$-adic completion is a maximal order in $D_{\QQ_p}$.
\item There exists a $\ZZ_p$-lattice $L$ in $V_{\QQ_p}$ that is self-dual for $\langle,\rangle$ and preserved by $\CO_D$.  
\end{enumerate}

We also specify a \emph{level} $K$, an open compact subgroup of $G(\adeles_f)$, where $\adeles_f$ denotes the ring of finite adeles of $\QQ$. 
We further assume that $K$ is neat (as defined in~\cite[Definition 1.4.1.8]{la}) and that it decomposes as $K=K^p K_p$, where $K^p\subset G\left(\adeles_f^{(p)}\right)$ and $K_p\subset G(\QQ_p)$ is hyperspecial, namely $K_p$ is the stabilizer of $L$ in $V_{\QQ_p}$.

We define $\Sh=\Sh_K$ to be the PEL-type moduli space of abelian varieties of level $K$, associated with the datum 
$(D,*,V,\langle,\rangle,h)$.
Under our assumptions, $\Sh_K$ extends canonically to a smooth quasi-projective scheme over $\CO_E\otimes \ZZ_{(p)}$, which by abuse of notation we still denote by $\Sh$.   
The set $\Sh(\C)$  is a (finite) disjoint union of Shimura varieties obtained from the data $(G,h, K)$. In the following, by abuse of language, we refer to $\Sh$ as the PEL-type Shimura variety of level $K$. 

Given an abelian variety $A$,
we denote by $A^t$ its dual (and, likewise, in the context of sheaves, we also use a superscript $t$ to denote the dual).

\subsubsection{Simplifying conditions}
\textbf{We assume $p$ is totally split in the reflex field $E$.} Then, by~\cite{wedhorn}, the ordinary locus of the modulo $p$ reduction $\Shp$ of the Shimura variety $\Sh$ is non-empty. We fix a prime $\p$ of $E$ above $p$, and we regard $\Sh=\Sh_K$ as a scheme over $\WittZp=\ZZ_p=\CO_{E,\p}$.

For convenience, we further \textbf{assume that $D=F$ and that $p$ splits completely in $F$}. These two assumptions are not necessary, but they allow us to simplify notation. 
(By assumption (2) above, Morita equivalence reduces the general case of $D\not=F$ to that of $D=F$; we refer to~\cite[Section 6]{EiMa} for an explanation of how the case of $p$ totally split in $E$, but not in $F$, relates to the case of $p$ totally split in $F$.)

\subsubsection{Signatures and Levi subgroups}\label{signandlevi_sec}

The decomposition $F\otimes_\QQ\CC=\oplus_{\tau\in\TT_{F}} \CC$
induces a decomposition $V_{1}=\bigoplus_{\tau\in \TT_{F}} V_{1,\tau}$. The \emph{signature} of the Shimura datum is the collection of integers  for all $\tau\in\TT_{F}$
\[a_\tau:=\dim_\CC V_{1,\tau}.\]
Let $2n= \dim_{F_0} V$. In the unitary case (A), we have $a_\tau\oplus a_{\tau^*}=n$, for all  $\tau\in\TT_{F}$, and the tuple of pairs $(a_\tau,a_{\tau^*})$, for $\tau\in\Sigma_F$, is the signature of the unitary group $G_0/F_0$. 
In the symplectic case (C), we have $a_\tau=n$, for all  $\tau\in\TT_{F_0}$.
(For $F_0=\QQ$,  we refer to case (C)  as the \emph{Siegel case}, and to case (A)  as the \emph{Hermitian case} if $a_\tau=a_\tau^*$.)

Let  $\overline{\QQ}_p$ be an algebraic closure of $\QQ_p$, we denote by $\CC_p$ its $p$-adic completion.
We fix an isomorphism $\iota:\CC\to\CC_p$,  compatible
with the  choice of a prime $\mathfrak{p}$ of $E$ above $p$. The map $\tau\mapsto\iota\circ \tau$ defines a bijection between the complex embeddings $\tau:F\to\CC$ and the $p$-adic embeddings $F\to \overline{\QQ}_p$. By assumption, $p$ splits completely in $F$, hence we deduce an identification of $\TT_F$ with the set of primes of $F$ above $p$ (resp. $\TT_{F_0}$ with the set of primes of $F_0$ above $p$), and write $\mathcal{O}_F\otimes_{\ZZ} \ZZ_p=\oplus _{\tau\in\TT_{F}} \ZZ_p$.

Starting from the signature of the Shimura datum, we define an algebraic group $H$ over $\ZZ$ as
\[
  H:=\prod_{\tau\in \TT_F} \GL_{a_\tau}
  =\begin{cases}
  \displaystyle\prod_{\tau\in\TT_{F_0} }\left( \GL_{a_\tau}\times  \GL_{a_{\tau^*}}\right)\subseteq \prod_{\tau\in\TT_{F_0}} \GL_n & \text{in the unitary case (A);}\\
  \displaystyle\prod_{\tau\in\TT_{F_0}} \GL_n & \text{in the symplectic case (C).}
  \end{cases}
\]

Note that $H_\CC$ can be identified with the Levi subgroup of $G_{1,\CC}$ that preserves the decomposition $V_\CC=V_1\oplus V_2$.
  Via $\iota$, we also identify $H_{\ZZ_p}$ with a Levi subgroup of $G_{1,\ZZ_p}$. (Here, with abuse of notation, we still denote by $G$ the reductive model over $\ZZ_p=\mathcal{O}_{E,\mathfrak{p}}$ associated with the  $\ZZ_p$-lattice $L$ in $V_{\QQ_p}$.
Note that all groups are split over $\ZZ_p$). 

We denote by $T\subseteq B$ the diagonal maximal torus inside the Borel subgroup of upper triangular matrices of $H$, and by $N$ the unipotent subgroup of $B$.

\subsubsection{Weights and Schur functors}
We recall relevant aspects of the  theory of algebraic representations of general linear groups, adapted to the group $H=\prod_{\tau\in \TT_F} {\rm GL}_{a_\tau}$.
We refer to~\cite[Section 2.4]{EFMV} for a more detailed discussion.

Over an algebraically closed field of characteristic $0$, in particular over $\overline{\QQ}_p$, the irreducible algebraic representations of $H$  (up to isomorphism) are in one-to-one correspondence with the dominant weights of  $T$. For each dominant weight $\kappa$, let $(\rho_\kappa, V_\kappa)$ denote the irreducible algebraic representation of $H$ over $\overline{\QQ}_p$ of highest weight $\kappa$.  Via Schur functors,  we construct $\ZZ_p$-models of the representations $(\rho_\kappa, V_\kappa)$.

We identify the set of dominant weights of (the maximal torus $T$ of) $H$ with the set
\[{X(T)}_+:=\Big\{(\kappa_{1,\tau},\ldots, \kappa_{a_\tau,\tau})\in\prod_{\tau\in\TT_{F}}\Z^{a_\tau}\Big|\kappa_{i,\tau}\geq \kappa_{i+1,\tau} \text{ for all } i\Big\},\]
via the morphism $\prod_{\tau\in\TT_{F}}\diag(t_{1,\tau},\dots, t_{a_\tau,\tau})\mapsto \prod_{\tau\in\TT_{F}}\prod_{1\leq i\leq a_\tau} t_{i,\tau}^{\kappa_{i,\tau}}$.
For $\kappa\in {X(T)}_+$,  $\kappa=(\kappa_\tau)$, we write
\begin{align*}
|\kappa|:=\sum_{\tau\in\TT_F}|\kappa_\tau| \text{ and } |\kappa_\tau|=\sum_{1\leq i\leq a_\tau} \kappa_{i,\tau}.
\end{align*}
We call a dominant weight $\kappa\in {X(T)}_+$ \emph{positive} if $\kappa\not=0$ and $\kappa_{a_\tau,\tau}\geq 0$ for all $\tau\in\TT_F$.

Let $\kappa$ be positive dominant weight (using  twists by appropriate powers of the determinant, all constructions and definitions below extend to non-positive dominant weights).
Let $y_\kappa$ be the associated {\em Young 
symmetrizer}. More precisely,  we define $y_\kappa=\otimes_{\tau\in\TT_F} y_{\kappa_\tau}$, where for each $\tau\in\TT_F$, $y_{\kappa_\tau}\in \ZZ[\mathfrak{S}_{|\kappa_\tau|}]$ is the Young symmetrizer associated with the dominant weight $\kappa_\tau$ of $H_\tau={\rm GL}_{a_\tau}$,  in the group algebra of the symmetric group $\mathfrak{S}_{|\kappa_\tau|}$ on $|\kappa_\tau|$ letters.

The $\kappa_\tau$-\emph{Schur functor} on the category of $R$-modules, for $R$ any ring\footnote{Schur functors are also defined on the category of locally free sheaves on a scheme $S$.}, is defined by
\[
\mathbb{S}_{\kappa_\tau} (M)=M^{\otimes |\kappa_\tau|} \cdot y_{\kappa_\tau} \subset M^{\otimes |\kappa_\tau|} .
\]
When $R$ is a $\ZZ_p$-algebra, or an algebraically closed field of characteristic $0$, the module $\mathbb{S}_{\kappa_\tau }(R^{a_\tau})$ is an irreducible representation of $H_\tau$ over $R$, with highest weight $\kappa_\tau$.

For each positive weight $\kappa\in {X(T)}_+$,  we define the $\kappa$-Schur functor on the category of $R$-modules $M$ endowed with a decomposition $M=\oplus_{\tau\in\TT_F} M_\tau$,
by
\[ 
\mathbb{S}_{\kappa} (M)=\boxtimes_{\tau\in\TT_F} \mathbb{S}_{\kappa_\tau} (M_\tau)=\left(\otimes_{\tau\in\TT_F} M^{\otimes |\kappa_\tau|}\right)\cdot y_\kappa \subset M^{\otimes |\kappa|}
\]

Let $W=\oplus_{\tau\in\TT_F} \ZZ_p^{a_\tau}$, be the standard representation of $H$ over $\ZZ_p$.
Then the module $\mathbb{S}_{\kappa }(W)$ is a $\ZZ_{p}$-model of the irreducible representation of $H$ with highest weight $\kappa$.
In the following, with abuse of notation and language, for each dominant weight $\kappa$, we still denote  by 
$(\rho_\kappa,V_\kappa)$ the  representation $\mathbb{S}_{\kappa }(W)$, and refer to it as the irreducible representation of highest weight $\kappa$.

\begin{remark}\label{tensor}
By construction, the Young symmetrizer defines an epimorphism
\[ y_\kappa: W^{\otimes|\kappa|}\to \mathbb{S}_{\kappa }(W) .\]
By~\cite[Lemma 2.4.6]{EFMV}, if $\kappa,\lambda$ are two positive dominant weights, then $|\kappa+\lambda|=|\kappa|+|\lambda|$ and there exists a projection map $\pi_{\kappa,\lambda}:\mathbb{S}_{\kappa}(W)\otimes\mathbb{S}_{\lambda}(W)\to\mathbb{S}_{\kappa+\lambda}(W)$ such that the morphism
$y_{\kappa+\lambda}: W^{\otimes|\kappa+\lambda|}\to \mathbb{S}_{\kappa +\lambda }(W)$ factors via the morphism
\[y_\kappa\otimes y_\lambda: W^{|\kappa+\lambda|}= W^{\otimes|\kappa|} \otimes W^{|\lambda|} \to \mathbb{S}_{\kappa }(W)\otimes \mathbb{S}_{\lambda }(W),\]
in other words $y_{\kappa+\lambda}=\pi_{\kappa,\lambda}\circ (y_{\kappa}\otimes y_{\lambda})$.

It is important to note that, as detailed in~\cite[Section 2.4]{EFMV}, all these maps are defined over $\ZZ_{p}$.
\end{remark}

\begin{remark}
  The Schur functors and the various projection maps are an important ingredient of our construction of the differential operators on $p$-adic automorphic forms in Section~\ref{sect:diff-padic}.
  For the construction of the differential operators on mod $p$ automorphic forms in Section~\ref{sect:diff-modp}, we use\footnote{There are other approaches, for instance via the characteristic-free theory of Schur functors~\cite{ABW} and the corresponding Littlewood--Richardson rule~\cite{Boffi}, but using reduction modulo $p$ from the $\ZZ_{p}$-models we defined is sufficient for our purposes.} the reductions modulo $p$ of the Schur functors and projection maps defined above over $\ZZ_{p}$.
  As indicated in Remark~\ref{p>n}, while this provides us with a definition of the necessary maps in characteristic $p$, some of these maps may be zero when $p$ is small.
\end{remark}

\subsubsection{Admissible weights} We now introduce the set of dominant weights by which Maass--Shimura differential operators (as well as the analogous differential operators studied in Section~\ref{analyticcontinuation-section}) can raise the weight of an automorphic form. 

We call a dominant weight $\kappa\in {X(T)}_+$ \emph{even} if $\kappa_{i,\tau}\equiv 0\mod 2$ for all $\tau\in \TT_F$ and all $1\leq i\leq a_\tau$, and we call $\kappa$ \emph{sum-symmetric} if
$\sum_{1\leq i\leq a_\tau}\kappa_{i,\tau}=\sum_{1\leq i\leq a_{\tau^*}}\kappa_{i,\tau^*}$ for all $\tau\in \TT_F$.

Let $W=\oplus_{\tau\in\TT_F} W_\tau$, $W_\tau=\ZZ_p^{a_\tau}$, be the standard representation of $H$ over $\ZZ_p$.

\begin{defi}\label{admissible-defi}
A dominant weight $\lambda$ is called \emph{admissible of depth $e_\lambda=e$}, for $e\in \Z_{>0}$, if the corresponding irreducible algebraic representation of $H$ occurs as a constituent of the representation
${(W^2)}^{\otimes e}$ for
\[W^2:=\begin{cases}
\bigoplus_{\tau\in\TT_{F_0}}  {\rm Sym}^2 W_\tau & \text{in the symplectic case,}\\
  \bigoplus_{\tau\in\TT_{F_0}} (W_\tau\otimes W_{\tau^\ast}) & \text{in the unitary case.}
\end{cases}\]
\end{defi}
By definition, admissible weights are positive. Furthermore,  a dominant weight $\lambda$ is \emph{admissible} in the symplectic case if and only if it is positive and even; and
in the unitary case, a dominant weight $\lambda$ is \emph{admissible} 
if and only if it is positive and sum-symmetric.

Note that if $\lambda$ is admissible, then $|\lambda |$ is positive and even, and the Young symmetrizer \[\yo_\lambda\colon W^{\otimes |\lambda|}\to W_\lambda\] induces an epimorphism ${(W^{ 2})}^{\otimes |\lambda |/2}\to W_\lambda$, which by abuse of notation, we still denote by $\yo_\lambda$. Hence, in particular, $\lambda$ is admissible of depth $e_\lambda=|\lambda|/2$.

\begin{rmk}

We denote scalar weights by ${(\underline{k_\tau})}_{\tau\in \TT_F}:={(k_\tau,\dots, k_\tau)}_{\tau\in\TT_F}$ for integers $k_\tau$.  Furthermore, if there exists an  integer $k$ such that $k_\tau = k$ for all $\tau$, then we write $\underline{k}$ for ${(\underline{k_\tau})}_{\tau\in \TT_F}$.
In the symplectic case, a scalar weight ${(\underline{\varla_\tau})}_{\tau\in\TT_F}$ is admissible if and only if $\varla_\tau\geq 0$ and  even, for all $\tau\in\TT_F$. Hence, in the Siegel case,  the admissible scalar weights are exactly the positive multiples of $\underline{2}$.
In the unitary case, a scalar weight ${(\underline{\varla_\tau})}_{\tau\in\TT_F}$ is admissible if and only if $a_\tau\varla_\tau=a_{\tau^*}\varla_{\tau^*}$, for all $\tau\in\TT_F$.
Hence, in the Hermitian case, a scalar weight is admissible if and only if $\varla_{\tau}=\varla_{\tau^*}$,  for all $\tau\in\TT_F$.

\end{rmk}

\subsubsection{Automorphic forms}\label{sect:automorphic} We recall the construction of automorphic sheaves over $\Sh$. We refer to~\cite[Section 3.2]{CEFMV} for details.

Given a characteristic $p$ field $\mathbb{F}$, we denote its Witt vectors by $W(\mathbb{F})$.  We also set $\WittZp:= W(\ZZ/p\ZZ)$ and identify $\WittZp$ with $\ZZ_p$. Following the conventions of Section~\ref{signandlevi_sec}, we identify $\TT_{F_0}$ (resp. $\TT_{F}$) with ${\rm Hom}\left(\CO_{F_0},\Witt\right)$ (resp.  ${\rm Hom}\left(\CO_{F},\Witt\right)$).

Let $\pi\colon \Auniv\to \Sh$ denote the universal abelian scheme, of relative dimension $g=n[F_0\colon\QQ]$ over $\Sh$, 
and set $\omega:=\pi_*\Omega^1_{\Auniv/\Sh}$; it is a locally free sheaf of rank $g$.

Set $\TT:=\TT_F$.  The action of  $\CO_F$ on $\omega$ induces a decomposition of $\omega$ as 
\[\omega=\bigoplus_{\tau\in\TT}\omega_\tau\]
where for each $\tau\in\TT$ the sheaf $\omega_\tau$ is locally free of rank $a_\tau$.

For any dominant weight $\kappa$ of $T$,  the \emph{automorphic sheaf} $\omega^\kappa$ of weight $\kappa$   over $\Sh$
is defined as follows. Let ${\mathcal E}/\Sh$  be the sheaf
\[{\mathcal E}:=\bigoplus_{\tau\in\TT}{\underline{\rm Isom}}_{\CO_\Sh}\left(\CO_\Sh^{a_\tau},\omega_{\tau}\right);\]
it admits a natural left action of $H$.  Given an irreducible representation $(\rho_\kappa,V_\kappa)$  of $H$ of highest weight $\kappa$, we define $\omega^\kappa:={\mathcal E}\times^{\rho_\kappa} V_\kappa$ so that for each $\CO_{E,\p}$-algebra $R$, $\omega^\kappa(R):=(\CE\times V_\kappa\otimes R)/\left((\ell, m)\equiv (g\ell, \rho_\kappa({}^t g^{-1})m)\right)$.

An \emph{automorphic form} of weight $\kappa$ and level $K$, defined over an $\CO_{E,\p}$-algebra $R$, is a global section of the sheaf $\omega^\kappa$ on $\Sh_K\times_{\CO_{E,\p}} R$.

\subsubsection{Hasse invariants and $p$-adic automorphic forms}\label{hasse_sec}

\newcommand{\ha}{h}

We recall the construction and properties of Hasse invariants relevant to our settings.  Details most pertinent to the present paper are available, for various contexts, in~\cite{GorenHasse, conrad-hasse, GoldringNicole} and~\cite[Section 7]{AndreattaGoren}.   (More general constructions have been given more recently, e.g.~\cite{KWhasse}, but such generality is unnecessary for the present paper.)

Let $\pi\colon \Auniv\to \Shp$ denote the universal abelian scheme over the mod $p$ reduction $\Shp$ 
 of the Shimura variety $\Sh$.
Setting  $\omega_{\Auniv/\Shp}:=\pi_*\Omega^1_{\Auniv/\Shp}$, we have the Hodge filtration  of $\hdrp$ over $\Shp$:
\[0\to\omega_{\Auniv/\Shp}\to \hdrp\to \rpi_\Auniv\to 0.\]

Let $\absfrob\colon\Shp\to \Shp$ denote the absolute Frobenius on $\Shp$;
we denote by
$\Auniv^{(p)}:=\Auniv\times_{\Shp,\absfrob} \Shp$ the pullback of $\Auniv$ under $\absfrob$, and by
$\relfrob\colon\Auniv\to \Auniv^{(p)}$ the relative Frobenius of $\Auniv$.
The morphism $\relfrob$ induces an $\cO_\Shp$-linear map \[\relfrob^*\colon\rpi_{\Auniv^{(p)}}=\rpi_\Auniv^{(p)} \to \rpi_\Auniv;\]
we denote the dual map by 
\begin{align*}
\ha\colon\omega_{\Auniv/\Shp}\to \omega_{\Auniv^{(p)}/ \Shp}=\omega_{\Auniv/\Shp}^{(p)}.
\end{align*}

\begin{defi}\label{def:hasse}
The \emph{Hasse invariant} is the automorphic form \[\hasse:=\det \ha\in H^0\left(\Shp, \vert \omega_{\Auniv/\Shp}\vert ^{p-1}\right)\]
(where $|\cdot|$ denotes the top exterior power).
\end{defi}

By construction, the ordinary locus $\shpord$ of $\Shp$ agrees with the complement of the vanishing locus of the Hasse invariant $\hasse$.

Recall that a sufficiently large power of $\hasse$ is known to lift to characteristic zero. 
For any $m\geq 1$, set $\Sh_m:=\Sh\times_{\WittZp} \WittZp/p^m$,
and denote by $\shord_m$ the locus of $\Sh_m$ where (a sufficiently large power of) the Hasse invariant $\hasse$ is invertible. 
Define $\shord:=\varinjlim_{m}\shord_m$ as a formal scheme over $\WittZp$.  The formal scheme $\shord$ is the \emph{formal ordinary locus} over $\WittZp$.

\begin{defi}
  For any dominant weight $\kappa$, a \emph{$p$-adic automorphic form of weight $\kappa$} is a section of
\[H^0(\shord,\omega^\kappa):=\varprojlim_{m} H^0(\shord_m,\omega^\kappa).\]
\end{defi}

\section{Analytic continuation of differential operators}\label{analyticcontinuation-section}\label{analytic_sec}
This section establishes analytic continuation of the mod $p$ reduction of certain $p$-adic differential 
operators $\Diff^\lambda_\kappa$ (analogues of $\ci$ Maass--Shimura differential operators), \emph{a priori} defined over the ordinary locus, to the whole Shimura variety.  (Recall from the previous section that we assume we are in case (A) or (C), i.e.\ the unitary or symplectic case.)  This section's main result is Theorem~\ref{ana_thm}, of which Theorem~\ref{continuationB} is a consequence.  Via different methods, de Shalit and Goren obtained related results on analytic continuation, when one restricts to quadratic imaginary fields $\cmfield$ and scalar weights~\cite{DSG2}.  As discussed in Section~\ref{intro-newresults}, unlike that approach, the techniques below handle all signatures and all weights simultaneously.  The techniques below also do not rely on explicit computations via $q$-expansions (and consequently handle fields $\cmfield$ of any degree and automorphic forms of any weight).

\subsection{Key ingredients for the construction of our differential operators}\label{ingreds-diffops}
Let $T$ be a scheme, and let $Y$ be a smooth scheme over $T$. 
Suppose $\pi\colon\mathscr{A}\to Y$ is a polarized abelian scheme. In particular, $\pi$ is a smooth and proper morphism.
Let $\omega:=\omega_{\mathscr{A}/Y}:=\pi_*\Omega^1_{\mathscr{A}/Y}\subseteq \hdrAY$, and consider the Hodge filtration
\[0\to\omega\hookrightarrow \hdrAY\to \rpi_\mathscr{A}\to 0.\]

Our differential operators are built from the Gauss--Manin connection
\[\nabla=\nabla_{\mathscr{A}/Y}\colon \hdrAY\to\hdrAY\otimes\Omega^1_{Y/T}\]
and the Kodaira--Spencer morphism 
\[\KS=\KS_{\mathscr{A}/Y}\colon\omega\otimes\omega\to\Omega^1_{Y/T}.\]
By definition, $KS_{\mathscr{A}/Y}:=\langle \cdot, \nabla (\cdot)\rangle_{\mathscr{A}}$, where $\langle \cdot, \cdot\rangle_{\mathscr{A}}$ is the pairing induced by the polarization on $\mathscr{A}$ and extended linearly in the second variable to a pairing between $\omega$ and $\nabla(\omega)$ (and using the fact that $\omega$ is an isotropic subspace of $\hdrAY$ under this pairing).
The Kodaira--Spencer morphism induces an isomorphism
\[\ks\colon\omega^2\isomto \Omega^1_{Y/T},\]
where 
\begin{align*}
\omega^2:=
\begin{cases}
\bigoplus_{\tau\in \TT_{F_0}}{\rm Sym}_{\cO_Y}^2\left(\omega_\tau\right) & \mbox{in the symplectic case}\\
\bigoplus_{\tau\in \TT_{F_0}}\left(\omega_\tau\otimes_{\cO_Y}\omega_{\tau^*}\right) & \mbox{in the unitary case}.
\end{cases}
\end{align*}
For details on the Gauss--Manin connection and the Kodaira--Spencer (iso)morphism, we refer the reader to~\cite[Sections 2.1.7 and 2.3.5]{la} and~\cite[Section 9]{FaltingsChai}.

\subsection{Differential operators on $p$-adic automorphic forms}\label{sect:diff-padic}
We briefly recall a construction of $p$-adic differential operators $\Diff_\kappa^\lambda$, analogues of $\ci$ Maass--Shimura operators, introduced in~\cite[Chapter II]{kaCM} for Hilbert modular forms and extended to the Siegel and unitary cases in~\cite{padiffops2, padiffops1}  and~\cite{EDiffOps, EFMV}, respectively.

Let $\shord$ denote the formal ordinary locus over $\witt$.  Write $\omega=\omega_{\Auniv/\shord}$. 
Define \[U\subseteq \hdrord\] to be Dwork's \emph{unit root} subcrystal\footnote{Under the identification of $\hdrord$ with the Dieudonn\'e crystal of $\Auniv[p^\infty]/\shord$, the subcrystal $U$ is the maximal \'etale subcrystal of $\hdrord$.} (introduced in~\cite{ka73}).
Then $U$ is a complement of $\omega$, and the inclusion $\omega\subseteq \hdrord$ composed with the projection 
$\pr_U:\hdrord\to\hdrord/U$
  is an isomorphism. We denote the induced morphism  by 
\[\PR_U: \hdrord\to\omega_{\Auniv/\shord}.\]

Define
\[\Diff:=(\PR_U\otimes \ks^{-1})\circ\nabla\vert _{\omega}: \omega \subseteq \hdrord\to\hdrord\otimes\Omega^1_{\shord /\witt}\to
 \omega\otimes\omega^2.\]
For any $d\geq 1$, by the Leibniz rule (the product rule), we obtain an operator

\[\Diff_d:  \omega^{\otimes d} \to \omega^{\otimes d}\otimes \omega^2\subseteq \omega^{\otimes(d+2)}.\]
By the construction of Schur functors (see also~\cite[Proposition 3.3.1]{EFMV}), for any dominant weight $\kappa$, the operator $\Diff_d$  for $d=|\kappa|$ induces an operator
\[\Diff_\kappa:\omega^\kappa\to \omega^\kappa \otimes \omega^2.\]
For any admissible weight $\lambda$, the corresponding weight-raising differential operators 
\[\Diff^\lambda_\kappa:\omega^\kappa\to \omega^{\kappa+\lambda}\]
are induced via Schur functors by the $e$-th iterations $\Diff^{(e)}_d:=\Diff_{d+2(e-1)}\circ \cdots\circ\Diff_{d+2}\circ\Diff_{d}$, 
\[\Diff^{(e)}_d: \omega^{\otimes d} \rightarrow \omega^{\otimes d}\otimes {(\omega^2)}^{\otimes e}\subseteq\omega^{\otimes d+2e},\]
composed with the Young symmetrizer $\yo_\lambda: {(\omega^2)}^{\otimes e}\to\omega^\lambda$, for $e=|\lambda|/2$ (see Remark~\ref{tensor}).

\begin{rmk}
For any admissible weight $\lambda$ and any $p$-adic form $f$ of weight $\kappa$, $\Diff_\kappa^\lambda(f)$ is a $p$-adic form of weight $\kappa+\lambda$. 
In particular, for any integer $m\geq 1$ and any mod $p^m$ automorphic form $f\in H^0(\Sh_{m}, \omega^\kappa)$, we obtain  $\Diff_\kappa^\lambda (f)\in H^0(\shord_m, \omega^{\kappa+\lambda})$. By construction of the Hasse invariant $\hasse$, for a sufficiently large integer $N>>0$, the section $\hasse^N \cdot\Diff_\kappa^\lambda (f) \in H^0\left(\shord_m, \omega^{\kappa+\lambda+{\underline{(p-1)N}}}\right)$ extends (uniquely) to all of $\Sh_{m}$, i.e.\ $\hasse^N \cdot\Diff_\kappa^\lambda (f)$ is, in fact, the restriction of an element of $H^0\left(\Sh_{m}, \omega^{\kappa+\lambda+\underline{(p-1)N}}\right)$.

We shall prove in Theorem~\ref{ana_thm} that when $m=1$, for each admissible weight $\lambda$, there exists an explicit integer, namely $N=|\lambda|/2$, such that for all dominant weights $\kappa$ and all characteristic $p$ automorphic forms $f$ of weight $\kappa$, the sections
 $\hasse^N\cdot \Diff_\kappa^\lambda (f)$, defined \emph{a priori} over the ordinary locus, extend to all of $\Shp:=\Sh_1$. 
 \end{rmk}

\begin{rmk}\label{p>n}
Recall that $2n:=\dim_{F_0}V$ as defined in Section~\ref{signandlevi_sec}.  By~\cite[Section 5.2, especially the proof of Proposition 5.2.4]{EFMV},
when $p>n$,  the weight-raising differential operators $\Diff^\lambda$ modulo $p$, for $\lambda$ symmetric, exist and are nonzero.  In fact, by \emph{loc.\ cit.}, in the unitary case, it suffices to work with $p>\max_{\tau\in\TT}(\min(a_\tau, a_{\tau^*}))$.
\end{rmk} 

\subsection{Differential operators on  automorphic forms modulo $p$}\label{diffmop_sec}
Let $\Shp$ denote the reduction modulo $p$ of the Shimura variety $\Sh$, i.e. $\Shp:=\Sh_1$, and set $\omega=\omega_{\Auniv/\Shp}$.  We now construct a new class of weight-raising differential operators $\Theta_\kappa^\lambda$ on the space $H^0(\Sh_{1}, \omega_{\Auniv/\Shp}^\kappa)$ of automorphic forms in characteristic $p$.  
  For the special case $G=\GSp_4$, Yamauchi constructed and studied similar operators in~\cite{Yama}.

\subsubsection{Adjugates and the Hasse invariant}
For a morphism $f\colon \cF\to\cG$  of locally free $\cO_X$-modules of rank $g$, the \emph{adjugate} of $f$ is the morphism 
\[\adj{f}\colon\cG\otimes \vert\cF\vert\to\cF\otimes\vert\cG\vert\]
(recall $|\cdot|$ denotes the top exterior power) obtained from the dual map 
\[{(\wedge^{g-1} f)}^t\colon \left({\mathcal G}\otimes \vert {\mathcal G}\vert^{-1}\right) \simeq {\left(\wedge^{g-1}\cG\right)}^t
\to {\left(\wedge^{g-1}{\mathcal F}\right)}^t\simeq \cF \otimes \vert {\mathcal F}\vert^{-1}\]
after tensoring with the identity map on $\vert \cF\vert \otimes\vert\cG\vert$. 
It satisfies the property that 
\begin{align}\label{adj}
  \adj{f}\circ (f\otimes \id_{\vert\cF\vert})=\id_\cF\otimes\det(f)\colon \cF\otimes   \vert {\mathcal F}\vert\to \cF\otimes   \vert {\mathcal G}\vert.
\end{align}

  Let $\relfrob\colon \Auniv\rightarrow \Auniv^{(p)}$ denote the relative Frobenius morphism on the universal abelian scheme over $\Shp$.  Denote by $\ha\colon\omega_{\Auniv/\Shp}\to \omega_{\Auniv/ \Shp}^{(p)} $ the dual of the morphism
$\relfrob^*\colon \rpi_{\Auniv}^{(p)}\to \rpi_\Auniv$ introduced in Section~\ref{hasse_sec}.
In the following, by abuse of notation, we still denote by \[\adj{\ha}\colon\omega_{\Auniv/\Shp}^{(p)}\to \omega_{\Auniv/ \Shp}\otimes \vert \omega_{\Auniv/\Shp}\vert ^{p-1} \] the map obtained by tensoring the adjugate of $\ha$  
with the identity on $\vert \omega_{\Auniv/\Shp}\vert^{-1}$ and then
composing with the identification $\vert \omega_{\Auniv/\Shp}^{(p)}\vert\simeq \vert \omega_{\Auniv/\Shp}\vert^p$. 
From equality~\eqref{adj}, we deduce that \[\adj{\ha}\circ \ha\colon\omega_{\Auniv/\Shp} \to \omega_{\Auniv/\Shp}\otimes \vert \omega_{\Auniv/\Shp}\vert^{p-1}\] is equal
to multiplication by the Hasse invariant $\hasse=\det (\ha)$, as introduced in Definition~\ref{def:hasse}.

Define\footnote{More precisely, $\curlyU$ is the sheafification of the image of $\relfrob^*$.}
\begin{align*}
\curlyU := \Image\left(\relfrob^*\colon \hdrp^{(p)}\rightarrow \hdrp\right).
\end{align*}

\begin{lem}\label{adjugateha}
 The relative Frobenius  $\relfrob\colon\Auniv\to\Auniv^{(p)}$ induces an isomorphism
 \begin{align}\label{isomodcurlyU}
 \hdrp/\curlyU\simeq \omega
^{(p)}.
\end{align}
Under this identification, the inclusion $\omega\subseteq \hdrp$ composed with the projection 
\begin{align*}
\pr_{\curlyU}\colon\hdrp\to\hdrp/\curlyU
\end{align*}
 agrees with the morphism $\ha\colon\omega\to \omega^{(p)}$.
\end{lem}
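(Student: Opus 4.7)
The strategy is to realise the isomorphism via the Verschiebung $V\colon \Auniv^{(p)}\to\Auniv$, the isogeny dual to $\relfrob$ under the polarization of $\Auniv$. The key input from characteristic $p$ is that $\relfrob$ kills differentials: in local coordinates $\relfrob^*(dx)=d(x^p)=0$, so the restriction of $\relfrob^*\colon \hdrp^{(p)}\to \hdrp$ to $\omega^{(p)}$ agrees, via functoriality of the Hodge filtration, with the zero map $\omega^{(p)}\to\omega$, and therefore vanishes in $\hdrp$. Consequently, $\relfrob^*$ factors through
\[\overline{\relfrob^*}\colon \rpi_\Auniv^{(p)}=\hdrp^{(p)}/\omega^{(p)}\longrightarrow \hdrp,\]
whose image is by definition $\curlyU$.

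Next, using $V\circ \relfrob=\relfrob\circ V=[p]$ together with the vanishing $[p]^*=0$ on de Rham cohomology modulo $p$, we obtain $V^*\relfrob^*=0$; hence $V^*(\curlyU)=0$ and $V^*\colon \hdrp\to \hdrp^{(p)}$ descends to a map on $\hdrp/\curlyU$. The polarization endows $\hdrp$ with a perfect alternating pairing under which $\relfrob^*$ and $V^*$ are adjoint and $\omega^{(p)}$ is Lagrangian; since $\relfrob^*$ kills $\omega^{(p)}$, adjointness forces $V^*(\hdrp)\subseteq (\omega^{(p)})^\perp=\omega^{(p)}$. This yields the candidate morphism
\[\bar V^*\colon \hdrp/\curlyU\longrightarrow \omega^{(p)},\]
induced by the relative Frobenius via Verschiebung duality.

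I would then verify that the composition $\omega\hookrightarrow\hdrp\twoheadrightarrow\hdrp/\curlyU\xrightarrow{\bar V^*}\omega^{(p)}$ agrees with $\ha$. By construction this composition equals $V^*|_\omega$, and the same adjointness, applied now against the Serre duality pairing $\omega\otimes \rpi_\Auniv\to \cO_\Shp$ (the restriction of the polarization pairing to the two graded pieces of the Hodge filtration), identifies $V^*|_\omega\colon\omega\to\omega^{(p)}$ as the dual of $\relfrob^*\colon \rpi_\Auniv^{(p)}\to \rpi_\Auniv$. By the definition of $\ha$ recalled in Section~\ref{hasse_sec}, this dual is precisely $\ha$, so the second assertion of the lemma follows once $\bar V^*$ is known to be an isomorphism.

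The main obstacle is showing $\bar V^*$ is indeed an isomorphism, i.e.\ establishing the sharp Frobenius--Verschiebung relations $\curlyU=\ker V^*$ and $\Image V^*=\omega^{(p)}$, which do not follow from the formal identities $\relfrob^* V^*=V^*\relfrob^*=0$ alone. To obtain them, I would invoke the conjugate Hodge filtration on $\hdrp$ coming from the conjugate spectral sequence together with the Cartier isomorphism $\mathcal{H}^q(\Omega^\bullet_{\Auniv/\Shp})\simeq \Omega^q_{\Auniv^{(p)}/\Shp}$. This produces a short exact sequence $0\to \rpi_\Auniv^{(p)}\to \hdrp\to \omega^{(p)}\to 0$ in which the subsheaf is canonically identified with $\curlyU$ via $\overline{\relfrob^*}$, while the quotient projection coincides with $\bar V^*$; in particular $\curlyU$ is locally free of rank $g$ and the quotient is $\omega^{(p)}$, which completes the proof.
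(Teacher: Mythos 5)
Your argument is correct and follows essentially the same route as the paper: the decisive input in both is Katz's conjugate Hodge--de Rham spectral sequence together with the Cartier isomorphism, which identifies $\curlyU$ with $\rpi_{\Auniv}^{(p)}$ via $\relfrob^*$ and yields the exact sequence $0\to\curlyU\to\hdrp\to\omega^{(p)}\to 0$, after which the isomorphism $\hdrp/\curlyU\simeq\omega^{(p)}$ and the identification of $\omega\to\hdrp/\curlyU$ with $\ha$ are obtained by duality under the polarization pairing. Your only cosmetic difference is that you name the polarization-dual of $\relfrob^*$ explicitly as the Verschiebung pullback $V^*$, which the paper leaves implicit in the word ``dualizing.''
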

\begin{proof} By Katz's work on the conjugate Hodge--de Rham spectral sequence~\cite[Section 2.3]{Katz-diff}  (also~\cite[Corollary 1.6., \S{}5.1, Proposition 5.1]{wedhorn09}, and~\cite[\S{}5.1]{BBM}), the sheaf $\curlyU$ and  the quotient $\hdrp/\curlyU$ are locally free of rank $g$ and dual to each other under the pairing $\langle ,\rangle_{\Auniv}$ 
on $\hdrp$ induced by the polarization $\mu_\Auniv$ of $\Auniv$.
Furthermore, it follows from this work of Katz that the relative Frobenius
\begin{align*}
\relfrob^*\colon\hdrp^{(p)}\to\hdrp
\end{align*}
 induces an isomorphism 
\begin{align}\label{katzcurlyUiso}
\rpi_{\Auniv}^{(p)}\simeq\curlyU.
\end{align}
Dualizing Isomorphism~\eqref{katzcurlyUiso}, we obtain Isomorphism~\eqref{isomodcurlyU}, as well as that the inclusion $\omega\subseteq \hdrp$ composed with the projection $\pr_{\curlyU}\colon\hdrp\to\hdrp/\curlyU$ agrees with the morphism $\ha\colon\omega_{\Auniv/\Shp}\to \omega_{\Auniv/\Shp}^{(p)}$ dual to $\relfrob^*\colon \rpi_{\Auniv}^{(p)}\to \rpi_\Auniv$.
\end{proof}

\subsubsection{A construction of differential operators in characteristic $p$}\label{sect:diff-modp}
We continue to set $\omega=\omega_{\Auniv/\Shp}$.
Consider the morphism
\[\PR_{\curlyU}:= \adj{h}\circ \pr_\curlyU\colon \hdrp \to\hdrp/\curlyU\simeq \omega^{(p)}\to\omega\otimes\vert\omega\vert^{p-1}.\]
Define
\[\Theta:=(\PR_\curlyU\otimes \ks^{-1})\circ\nabla\vert_{\omega}\colon \omega \to\hdrp\otimes\Omega^1_{X /\witt}\to
\left( \omega\otimes\vert\omega\vert^{p-1}\right)\otimes \omega^2= \omega\otimes\left(\vert\omega\vert^{p-1}\otimes \omega^2\right).\]

More generally, we construct an operator
\[\Theta_d\colon  \omega^{\otimes d} \to \omega^{\otimes d}\otimes \left(\vert\omega\vert^{p-1}\otimes \omega^2\right)\subseteq \omega^{\otimes d+p+1}\]
as follows.
Like in~\cite[Section 3.1]{EiMa2} (or similarly,~\cite[Section 3.1.2]{EDiffOps} and~\cite[Section 4.3]{hasv}), we use the product rule to extend the Gauss--Manin connection to a connection
\begin{align*}
\nabla_{\otimes d}\colon {\left(\hdrp\right)}^{\otimes d}\to{\left(\hdrp\right)}^{\otimes d}\otimes\Omega^1_{X/\witt},
\end{align*}
via
\begin{align*}
\nabla_{\otimes d}\left(f_1\otimes\cdots\otimes f_d\right) = \sum_{j=1}^d \iota_j\left(f_1\otimes\cdots\otimes\nabla\left(f_j\right)\otimes\cdots\otimes f_d\right),
\end{align*}
where $\iota_j$ is the isomorphism
\begin{align*}
\iota_j: {(\hdrp)}^{\otimes j}\otimes\Omega_{Y/\witt}^1\otimes{(\hdrp)}^{\otimes (d-j)}&\isomto {(\hdrp)}^{\otimes d}\otimes\Omega_{X/\witt}^1\\
e_1\otimes \cdots \otimes e_j\otimes u \otimes e_{j+1}\otimes\cdots\otimes e_d&\mapsto e_1\otimes\cdots\otimes e_d\otimes u.
\end{align*}
So composing with the inclusion $\omega^{\otimes d}\hookrightarrow {\left(\hdrp\right)}^{\otimes d}$, we get a morphism
\begin{align}\label{nabladimg}
\nabla_{\otimes d}: \omega^{\otimes d}\rightarrow \sum_{j=1}^{d}\left(\omega^{\otimes (j-1)}\otimes\hdrp\otimes\omega^{\otimes(d-j)}\otimes \Omega_{X/\witt}^1\right).\
\end{align}
For any $d\geq 1$, we then obtain an operator 
\[\Theta_d:=(\PR_\curlyU\otimes \ks^{-1})\circ\nabla_{\otimes d}\colon  \omega^{\otimes d} \to \omega^{\otimes d}\otimes \left(\vert\omega\vert^{p-1}\otimes \omega^2\right)\subseteq \omega^{\otimes d+p+1}.\]
This is similar to Katz's construction of the theta operator in~\cite[Remark, p. Ka-5 to Ka-6]{Katz-theta} and relies on the fact that the image of $\nabla_{\otimes d}$ restricted to $\omega^{\otimes d}$ is as in~\eqref{nabladimg}.

By the construction of Schur functors, for any dominant weight $\kappa$, the operator $\Theta_d$  for $d=|\kappa|$ induces an operator
\[\Theta_\kappa\colon\omega^\kappa\to \omega^\kappa \otimes \left(\vert\omega\vert^{p-1}\otimes\omega^2\right).\]

For any admissible weight $\lambda$ of depth $e=|\lambda|/2$, the corresponding differential operators 
\[\Theta^\lambda_\kappa\colon\omega^\kappa\to \omega^{\kappa+\lambda+\underline{(p-1)e}}\]
are induced via Schur functors applied to the $e$-th iteration $\Theta^{(e)}_d:=\Theta_{d+(p+1)(e-1)}\circ \cdots\circ\Theta_{d+p+1}\circ\Theta_{d}$,
\[\Theta^{(e)}_d\colon \omega^{\otimes d} \to \omega^{\otimes d}\otimes {\left(\vert\omega\vert^{p-1}\otimes \omega^2\right)}^{\otimes e}\subseteq \omega^{\otimes d+(p+1)e},\]
composed with projection \[\id_{\vert\omega\vert^{(p-1)e}}\otimes \yo_\lambda\colon  {\left(\vert\omega\vert^{p-1}\otimes \omega^2\right)}^{\otimes e}=  \vert\omega\vert^{(p-1)e}\otimes {\left(\omega^{2}\right)}^{\otimes e}
\to \vert\omega\vert^{(p-1)e}\otimes \omega^\lambda=\omega^{\lambda+\underline{(p-1)e}}.\]

\begin{rmk}
In Section~\ref{ana_sec}, we shall be particularly interested in the action of $\Theta_\kappa^\lambda$ on the restriction $\omega_{\Auniv/\Shp}|_{\Shpord}  = \omega_{\Auniv/\Shpord}$ to the ordinary locus $\Shpord:=\shord_1\subset \Shp$.  For convenience of notation, we write $\Theta_\kappa^\lambda|_\Shpord$ to denote the resulting morphism of sheaves $\omega_{\Auniv/\shpord}^\kappa\to \omega_{\Auniv/\shpord}^{\kappa+\lambda+\underline{(p-1)|\lambda|/2}}$ over $\Shpord$.  Similarly, we write ${\Pi_\curlyU}\vert_{\shpord}$ for the morphism $\hdrordp\to \omega_{\Auniv/\shpord} \otimes \vert \omega_{\Auniv/\shpord} \vert^{p-1}$ obtained by restriction to sheaves over $\shpord\subset \Shp$.
\end{rmk}

\subsection{Analytic continuation modulo $p$}\label{ana_sec}

We now prove the operators $\Theta_\kappa^\lambda$ analytically continue the mod $p$ reduction of the $p$-adic differential operators $\Diff_\kappa^\lambda$, \emph{a priori} defined only over the ordinary locus $\Shpord:=\shord_1$, to all of $\Shp:=\Sh_1$.
More precisely, we establish the following result, of which Theorem~\ref{continuationB} from Section~\ref{intro-newresults} is a consequence.
\begin{thm}[Analytic Continuation]\label{ana_thm}
For any admissible weight $\lambda$ and dominant weight $\kappa$,
 \[ {\Theta_\kappa^\lambda}\vert_{ \shpord} \equiv \hasse^{|\lambda |/2} \cdot \Diff_\kappa^\lambda  \mod p \]
 as morphisms $\omega_{\Auniv/\shpord}^\kappa\to \omega_{\Auniv/\shpord}^{\kappa+\lambda+\underline{(p-1)|\lambda|/2}}$ of sheaves over $\shpord$.  
\end{thm}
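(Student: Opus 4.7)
The plan is to reduce the theorem to a single-sheaf comparison of the two projections $\PR_U$ (used in $\Diff$) and $\PR_\curlyU$ (used in $\Theta$) on the ordinary locus $\Shpord$, and then propagate that comparison through the Leibniz rule, the $e$-fold iteration, and the Young symmetrizer.  Concretely, the core identity I would establish is
\[\PR_\curlyU\vert_{\Shpord} \;=\; \hasse \cdot \PR_U\vert_{\Shpord},\]
viewed as morphisms $\hdrordp\to\omega_{\Auniv/\Shpord}\otimes\vert\omega_{\Auniv/\Shpord}\vert^{p-1}$, where on $\Shpord$ the Hasse invariant $\hasse$ is a unit.

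To establish this, I would first show that, on $\Shpord$, the mod $p$ reduction of the unit-root subcrystal $U\subseteq\hdrord$ coincides with $\curlyU\vert_{\Shpord}$.  This is a standard Dieudonn\'e-theoretic fact: the crystalline Frobenius on $\hdrord$ kills $\omega$ modulo $p$ and is invertible on $U$, hence its image mod $p$ is both $U\vert_{\Shpord}\bmod p$ and $\Image(\relfrob^*)=\curlyU\vert_{\Shpord}$.  Consequently the quotient maps $\pr_U\vert_{\Shpord}$ and $\pr_\curlyU\vert_{\Shpord}$ have the same kernel, and the comparison reduces to a comparison of the two target identifications.  The projection $\PR_U$ uses the complement splitting $\hdrordp/U\simeq\omega_{\Auniv/\Shpord}$, while $\PR_\curlyU$ uses the identification $\hdrordp/\curlyU\simeq\omega_{\Auniv/\Shpord}^{(p)}$ from Lemma~\ref{adjugateha} followed by $\adj{\ha}$.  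By Lemma~\ref{adjugateha}, the composition $\omega\hookrightarrow\hdrordp\to\hdrordp/\curlyU\simeq\omega^{(p)}$ is exactly $\ha$, so the two identifications of the common target differ precisely by $\ha\colon\omega\to\omega^{(p)}$.  Applying $\adj{\ha}$ and invoking Equation~\eqref{adj}, which gives $\adj{\ha}\circ\ha=\hasse\cdot\id_\omega$, then yields the core identity.

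With the core identity in hand, the theorem follows by formal propagation.  Since $\nabla$ and $\ks^{-1}$ are common to both operators, the identity immediately upgrades to $\Theta\vert_{\Shpord}=\hasse\cdot\Diff\vert_{\Shpord}$ on $\omega_{\Auniv/\Shpord}$.  By the Leibniz rule, the extensions $\Theta_d$ and $\Diff_d$ to $\omega^{\otimes d}$ are $\cO$-linear sums of $d$ terms each applying the single-sheaf operator at one tensor slot, so $\Theta_d\vert_{\Shpord}=\hasse\cdot\Diff_d\vert_{\Shpord}$.  Iterating $e$ times---with the accumulated $\vert\omega\vert^{(p-1)j}$-factor at the $j$-th step treated as a scalar coefficient that the next differentiation does not hit, parallel to the convention used in the construction of $\Diff^{(e)}_d$---each iteration contributes one further factor of $\hasse$ via the $\PR_\curlyU$ projection, giving $\Theta^{(e)}_d\vert_{\Shpord}=\hasse^e\cdot\Diff^{(e)}_d\vert_{\Shpord}$.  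Finally, the Young symmetrizer $\yo_\lambda$ is $\cO$-linear and natural, hence commutes with multiplication by the scalar section $\hasse^e\in H^0(\Shpord,\vert\omega\vert^{(p-1)e})$; specializing to $d=\vert\kappa\vert$ and $e=\vert\lambda\vert/2$, and passing from $\omega^{\otimes\vert\kappa\vert}$ to $\omega^\kappa$ via the corresponding Schur functor, one concludes
\[\Theta_\kappa^\lambda\vert_{\Shpord}=\hasse^{\vert\lambda\vert/2}\cdot\Diff_\kappa^\lambda.\]

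The main obstacle is the core identity in the second paragraph: one must reconcile two genuinely different natural identifications of $\hdrp$ modulo a chosen complement of $\omega$ on the ordinary locus---one via the $p$-adic unit-root splitting, available only over $\shord$ in characteristic $0$, and one via the image of relative Frobenius, available in characteristic $p$ over all of $\Shp$.  Lemma~\ref{adjugateha} is precisely the bridge that makes the reconciliation possible, turning the discrepancy into the single morphism $\ha$, and the formula $\adj{\ha}\circ\ha=\hasse\cdot\id$ produces exactly the factor $\hasse$ that appears in the theorem.  The subsequent propagation through the Leibniz rule, iteration, and Schur functor is essentially formal, but requires a careful convention for how $\Theta$ extends to tensor products involving $\vert\omega\vert^{p-1}$---treating these accumulated determinant factors as scalars in subsequent iterations---to ensure that the comparison with $\Diff^{(e)}_d$ remains term-by-term.
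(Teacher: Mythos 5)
Your proposal is correct and follows essentially the same route as the paper: the theorem is reduced to the single identity $\Pi_{\curlyU}\vert_{\shpord}\equiv\hasse\cdot\Pi_U\bmod p$, proved by identifying $\curlyU$ with the unit-root subsheaf $U$ over the ordinary locus and then invoking Lemma~\ref{adjugateha} together with $\adj{\ha}\circ\ha=\hasse\cdot\id$. The propagation through the Leibniz rule, iteration, and Young symmetrizer that you spell out is exactly the step the paper compresses into ``comparing the constructions of the operators.''
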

In particular,  for any classical automorphic form $f$ of weight $\kappa$, the $\bmod p$ reduction of the $p$-adic automorphic form $E^{|\lambda |/2}\cdot\Diff_\kappa^\lambda f$ is classical.

\begin{proof}
Comparing the constructions of the operators $\Theta_\kappa^\lambda$ and $\Diff_\kappa^\lambda$, the statement  reduces to the following lemma.
\end{proof}

\begin{lem} Maintaining the above notation,
\[{\Pi_\curlyU}\vert_{\shpord} \equiv \hasse\cdot \Pi_U\mod p\]
as maps $\hdrordp\to \omega_{\Auniv/\shpord} \otimes \vert \omega_{\Auniv/\shpord} \vert^{p-1}$.
\end{lem}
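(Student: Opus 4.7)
The plan is to reduce the claimed equality to the adjugate identity $\adj{\ha}\circ\ha = \hasse\cdot\id_\omega$ that appears just after equation~\eqref{adj}. Concretely, I will show that over $\shpord$ the projection $\pr_\curlyU$ factors as $\pr_\curlyU = \ha\circ\Pi_U$, so that applying $\adj{\ha}$ produces
\[
\Pi_\curlyU|_{\shpord} \;=\; \adj{\ha}\circ\pr_\curlyU|_{\shpord} \;=\; \adj{\ha}\circ\ha\circ\Pi_U \;=\; \hasse\cdot\Pi_U,
\]
which is exactly the lemma.

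First I will identify $\curlyU|_{\shpord}$ with the mod $p$ reduction of the unit root subcrystal $U$. Since $\hasse$ is invertible on $\shpord$, the map $\ha$ is an isomorphism of locally free sheaves of rank $g$, and by the preceding lemma $\curlyU$ is a rank-$g$ locally direct summand of $\hdrp$ that is complementary to $\omega$ over $\shpord$. Standard slope theory for the $F$-crystal $\hdrordp$ then identifies $\curlyU|_{\shpord}=\Image(\relfrob^*)|_{\shpord}$ with the slope-zero subcrystal, which is precisely the mod-$p$ reduction of the unit root $U$ (on the ordinary locus, the image of a single relative Frobenius is already saturated at slope zero).

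Second, I will compare $\pr_\curlyU$ and $\ha\circ\Pi_U$ on the decomposition $\hdrordp = \omega\oplus U$. The preceding lemma identifies $\hdrp/\curlyU$ with $\omega^{(p)}$ in such a way that $\pr_\curlyU|_\omega = \ha$; and since $\Pi_U|_\omega = \id_\omega$ by construction, also $\ha\circ\Pi_U|_\omega = \ha$. Both maps vanish on $U = \curlyU|_{\shpord}$ (by definition of $\Pi_U$ and of $\pr_\curlyU$), so they coincide on the full sum $\omega\oplus U = \hdrordp$.

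The hard part will be the identification in the first step, namely that the Frobenius image $\curlyU|_{\shpord}$ is the mod-$p$ reduction of the unit root subcrystal $U$. This is the only place where the argument leaves the formalism set up in the excerpt and must invoke standard but nontrivial input from the theory of $F$-crystals on ordinary loci (the slope filtration of $\hdrordp$ has equal rank-$g$ slope-$0$ and slope-$1$ pieces, and the slope-$0$ piece is captured after a single application of $\relfrob^*$). Once this is granted, everything else is a formal combination of the preceding lemma and the adjugate identity~\eqref{adj}, and the conclusion of Theorem~\ref{ana_thm} follows immediately by substituting this lemma into the definitions of $\Theta_\kappa^\lambda$ and $\Diff_\kappa^\lambda$.
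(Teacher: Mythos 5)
Your proposal is correct and follows essentially the same route as the paper's proof: identify $\curlyU$ with the unit root subcrystal $U$ over $\shpord$ (the paper does this directly, noting $\relfrob^*$ is an isomorphism of $U^{(p)}$ onto $U$ and vanishes on $\omega^{(p)}$, rather than appealing to slope theory), use Lemma~\ref{adjugateha} to see $\pr_\curlyU=\ha\circ\Pi_U$, and conclude with the adjugate identity~\eqref{adj}.
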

\begin{proof}
We work over $\shpord$ and write $\omega=\omega_{\Auniv/\shpord}$. Consider the pullback of the slope filtration to $\hdrordp^{(p)}$, namely $0\subseteq U^{(p)}\subset \hdrordp^{(p)}$. By construction of the unit root subcrystal $U$, the restriction of the relative Frobenius map $\relfrob^*$ to $U^{(p)}$ induces an isomorphism onto $U$. Since $\relfrob^*$  is equal to $0$ on $\omega^{(p)}$, it follows from the definition of $\curlyU$ that $\curlyU=U$ over $\shpord$.

By Lemma~\ref{adjugateha}, the induced morphism \[\omega\simeq \hdrord/U\to \hdrord/\curlyU\simeq \omega^{(p)}\] agrees with the restriction to $\shpord$ of the morphism $\ha\colon\omega\to\omega^{(p)}$. We deduce  that  $\pi_\curlyU \equiv h  \circ\Pi_U $ over $\shpord$,
and hence
\[\Pi_\curlyU =\adj{\ha}\circ \pi_\curlyU\equiv \adj{\ha}\circ h  \circ\Pi_U \equiv \hasse\cdot \Pi_U\mod p.\]
\end{proof}

\begin{rmk}
Since the weights of ${\Theta_\kappa^\lambda}\vert_{ \shpord}(f)$ and $\Diff_\kappa^\lambda (f)$ are different, we can only compare the values of ${\Theta_\kappa^\lambda}\vert_{ \shpord}(f)$ and $\Diff_\kappa^\lambda (f)$ at a point of $\shpord$ after identifying modular forms of different weights in a larger, common space of $p$-adic modular forms $V$ (like in~\cite[Section 8.1.3]{hida} or~\cite[Section 4.2.1]{CEFMV}).  For the purpose of the present paper, we need not concern ourselves with such details of comparisons between forms of different weights.  We note, though, that since the $q$-expansions of the Hasse invariant $\hasse$ are identically $1 \mod p$, the $q$-expansions of ${\Theta_\kappa^\lambda}\vert_{ \shpord}(f)$ and $\Diff_\kappa^\lambda (f)$ agree mod $p$, which implies ${\Theta_\kappa^\lambda}\vert_{ \shpord}(f)\equiv\Diff_\kappa^\lambda (f)\mod p$ inside $V$.
\end{rmk}

\begin{rmk}\label{power-remark}

In Theorem~\ref{ana_thm}, we show that in order to analytically continue $\Diff^\lambda$ to the whole Shimura variety, it suffices to multiply $\Diff^\lambda$ by a power of $\hasse$  which depends only by the depth of $\lambda$ (in the sense of Definition~\ref{admissible-defi}).  This guarantees that the resulting operator is defined on automorphic forms on the whole $\mod$ $p$ Shimura variety.  If one specializes to a specific weight $\lambda$ of a given depth, though, it is possible that a smaller power of $\hasse$ suffices to cancel the poles introduced by $\Diff^\lambda$.

For example, this is seen for scalar weights in~\cite[Proposition 3.9]{Yama}.
Working with $\GSp_4(\IQ)$ and toward a different goal from us, Yamauchi shows that $E\Diff^{\underline{2}}(f)$ is entire for any classical form $f$ (by comparison, Theorem~\ref{ana_thm} gives the operator $E^2\Diff^{\underline 2}$).  Although only carried out in~\cite{Yama} for $\GSp_4(\IQ)$, his ideas might be considered more generally, and offer a  possible route to determining a lower power of $\hasse$ by which one needs to multiply the operator $\Diff^{\lambda}$.  In particular, for a scalar weight $\lambda = \underline{2k}$,  with $k\in \ZZ_{>0}$, one would expect the operator $\hasse ^k \Diff^{\underline{2k}}$ to be entire.
In the spirit of Katz's approach from~\cite[Proof of Lemma 3]{Katz-theta} (which covers the case of $\GSp_2(\IQ)$), the key input is a careful analysis of $\nabla$, $\KS$, and Frobenius working locally on the base, with explicit coordinates.
Indeed, Proposition~\ref{RC_prop} illustrates that
in the special case of scalar weight Siegel modular forms over $\IQ$ of sufficiently large weight, we need not have multiplied by such a high power of $\hasse$, as one copy suffices.

\end{rmk}

\subsubsection{Relation to B\"ocherer--Nagaoka's work with Rankin--Cohen brackets}\label{BNRC-section}
In the case of scalar-valued classical Siegel modular forms (when the totally real field is $\IQ$) of level $1$ that arise as the reduction $\mod p$ of classical characteristic $0$ Siegel modular forms,
work of B\"ocherer--Nagaoka~\cite[Theorem 4]{BoechererNagaoka-firsttheta} yields a
different approach to the analytic continuation of the operator $\Diff^{\underline{2}}$ modulo $p$, for $p\geq n+3$.
We denote by $\left[\,,\right]$ the generalized Rankin--Cohen bracket constructed for pairs of Siegel modular forms of weight $\geq n/2$ in~\cite{Ibukiyama} and~\cite{EholzerIbukiyama}.
In~\cite[Section 4]{BoechererNagaoka-firsttheta}, B\"ocherer and Nagaoka define a theta operator (denoted here by $\Theta_{\mathrm{BN}}$) on classical Siegel modular forms $f$ of level $1$ and scalar weight $k$ that arise as the $\mod p$ reduction of classical characteristic $0$ Siegel modular forms.
(As noted in Remark~\ref{rmk:lift}, not all intrinsic $\mod p$ Siegel modular forms are reductions from characteristic zero.)

When $2k\geq n$, B\"ocherer and Nagaoka express $\Theta_{\mathrm{BN}}(f)$  in terms of the normalized pairing $\{\,, \}:= {(2\pi\sqrt{-1})}^{-n}\left[\,,\right]$ (in~\cite[proof of Theorem 4]{BoechererNagaoka-firsttheta}).
When $k<n/2$, $\Theta_{\mathrm{BN}}(f) = 0$,  as noted in~\cite[proof of Theorem 4]{BoechererNagaoka-firsttheta}.  This vanishing follows immediately from the formula for the action of $\Theta_{\mathrm{BN}}$ on $q$-expansions $\sum a(T) q^T$ of mod $p$ reductions of characteristic $0$ (archimedean) Siegel modular forms, combined with the fact that classical characteristic $0$ (archimedean) Siegel modular forms are singular, i.e.\ their Fourier coefficients $a(T)$ vanish at all half-integral positive $T$ (as explained in~\cite[Section 1]{Klingen} and first proved in~\cite{Resnikoff, Freitag}).

\begin{prop}\label{RC_prop}

Assume $p\geq n+3$.  Then, for  any scalar-valued Siegel modular form $f$ (where the totally real field is $\IQ$) of level $1$ and weight  $k$,
\begin{align*}
\Theta^{\underline{2}}(f)=
\begin{cases}
 \frac{{(-1)}^n}{(n+1)}\hasse^{n-1}\left\{f,\hasse\right\}  \bmod p, & \mbox{ if } k\geq n/2\\
 0,  \mbox{ if } k<n/2.
 \end{cases}
 \end{align*}
\end{prop}

\begin{proof}

First we deal with the case $k\geq n/2$, i.e.\ the range where $\{\,, \}$ is defined.
Under the assumptions $p\geq n+3$ and $2k\geq n$, both $\hasse^{n-1}\{ f , \hasse\}$ and $\Theta^{\underline{2}}\equiv\hasse^n\cdot \Diff^{\underline{2}} (f)$  are Siegel modular forms, of scalar weight $k+ 2+n(p-1)$. Hence, by the $q$-expansion principle, it suffices to compare their  $q$-expansions. 
By comparing the action of $\left\{\cdot,\hasse\right\}$ in~\cite[proof of Theorem 4]{BoechererNagaoka-firsttheta} with the action of $\Diff^{\underline{2}}$ on $q$-expansions in~\cite[Theorem 9.2]{EDiffOps}, combined with the formula for the effect of the Schur functors on coefficients (as provided, e.g., in~\cite[Corollary  5.2.10]{EFMV}), we deduce
 \[ \frac{{(-1)}^n}{ (n+1)! } \{\cdot,\hasse\}|_{\shpord} \equiv \frac{1}{n!}\hasse \Diff^{\underline{2}}\mod p.\]
 (Note that $\Theta^{\underline{2}}f(q) = n!\Theta_{\mathrm{BN}}f(q)$.)
Under the assumption $p>n+3$, both constants $\frac{{(-1)}^n}{ (n+1)!}$ and $\frac{1}{n!}$ are $p$-adic units, so their quotient $\frac{{(-1)}^n}{ (n+1)}$ is a $p$-adic unit as well.  So, applying Theorem~\ref{ana_thm}, we have
 \[\Theta^{\underline{2}}\equiv\hasse^n \Diff^{\underline{2}} \equiv \frac{{(-1)}^n}{(n+1)}\hasse^{n-1}\left\{\cdot,\hasse\right\} \mod p.\]

In the case $k<n/2$, as noted above, $f$ is singular.  Similarly to the argument of~\cite{BoechererNagaoka-firsttheta} mentioned above, we immediately see from the action of $\Theta^{\underline{2}}$ on $q$-expansions that $\Theta^{\underline{2}}(f) = 0$ for all singular forms.
\end{proof}

\begin{rmk}\label{rmk:lift}
 As noted in the Remark at the end of~\cite[Section 1]{BoechererNagaoka-firsttheta},
 the B\"ocherer--Nagaoka operator applies to reductions modulo $p$ of scalar-weight classical Siegel modular forms (when the totally real field is $\IQ$) in characteristic zero, whereas our operators apply to intrinsic characteristic $p$ Siegel modular forms.  
 (As explained in Remark~\ref{power-remark}, specializing to a specific weight in our setting can enable lowering our exponent on $\hasse$ to match theirs.)
It is expected that, in low weights, the space of intrinsic forms is strictly bigger than the space of reductions.
Stroh~\cite[Th\'eor\`eme 1.3]{stroh-classicite},~\cite[Th\'eor\`eme 1.1]{stroh-relevement} and Lan--Suh~\cite[Corollary 4.3]{LanSuh} show that in the scalar-valued case, the two spaces coincide when $k \geq n+2$ and $p\geq n(n+1)/2$.

Our focus on the geometry provides results for a broader set of cases, including vector-weights (as well as lower scalar weights, since we do not restrict ourselves to reductions mod $p$ of characteristic $0$ forms),
Siegel--Hilbert associated with totally real fields of arbitrary degree (as opposed to degree $1$ in~\cite{BoechererNagaoka-firsttheta}), and unitary groups of arbitrary signature.
\end{rmk}

\section{Commutation relations with Hecke operators}\label{Hecke_sec}

The goal of this section is to explain how the differential operators $\Theta_\kappa^\lambda$ (resp. $\Diff_\kappa^\lambda$) interact with Hecke operators.  Following the convention of~\cite[Section 3.1]{Edixhoven}, we use the terminology ``commutation relations'' to describe this interaction.
We shall see that the two commute up to  (explicit) scalar multiples. Hence, in particular, we shall deduce that the differential operators $\Theta_\kappa^\lambda$ (resp. $\Diff_\kappa^\lambda$) map Hecke eigenforms to Hecke eigenforms.  
Furthermore, in the case of Hecke operators at $p$, we shall prove that the scalar multiple is a power of $p$, and as an application, deduce the vanishing of the operators $\ee\Diff_\kappa^\lambda=0$, where $\ee$ denotes Hida's ordinary projector. For modular forms, such vanishing was also noted in~\cite[p. 29]{CGJ} and~\cite[p. 345]{Coleman95}.

\subsection{Interaction with isogenies}
Throughout this section, let $Y$ be a scheme that is smooth over a scheme $T$.

\begin{defi}
Given two polarized abelian varieties $(A,\mu_A), (B,\mu_B)$, we say that an isogeny $\phi\colon A\to B$ preserves the polarizations up to a scalar multiple, if there exists $\nu(\phi)\in \QQ^\times$, called the \emph{similitude factor}, such that 
\[\phi^t\circ \mu_B\circ \phi={[\nu(\phi)]}_{A^t}\circ \mu_A.\]
\end{defi}

Let $(A,\mu_A)$ and $(B, \mu_B)$ be polarized abelian schemes of the same relative dimension $g$ over the scheme $Y/T$.
An isogeny $\phi\colon A\to B$ defined over $Y$ induces a morphism $\phi^*\colon H^1_{\rm dR}(B/Y)\to H^1_{\rm dR}(A/Y)$, which preserves the Hodge filtration, that is $\phi^* (\omega_{B/Y})\subseteq \omega_{A/Y}$. By abuse of notation, for each dominant weight $\kappa$,
we still denote by \[\phi^*\colon\omega_{B/Y}^\kappa\to\omega_{A/Y}^\kappa\]
the morphism induced by the map $\phi^*$ via Schur functors.
Note that if the polarizations $\mu_A, \mu_B$ have the same degree, then $\nu\in \ZZ$ and $\deg(\phi)={\nu(\phi)}^g$.
In particular, for $(A,\mu_A)=(B,\mu_B)$, and $\phi={[n]}_A$, then $\nu(\phi)=n^2$; in this case, the morphism ${(\phi^*)}^{\otimes d}$ is multiplication by $n^d$.

\begin{lem}\label{kodairaisogeny}
Let $(A,\mu_A)$ and $(B, \mu_B)$ be polarized abelian schemes of the same relative dimension over $Y/T$, and let $\phi\colon A\to B$ be an isogeny preserving their polarization up to multiplication by a scalar $\nu(\phi)\in\QQ^\times$. Then 
\[\KS_{A/Y}\circ (\phi^*\otimes\phi^*)=\nu(\phi) \KS_{B/Y}.\]
\end{lem}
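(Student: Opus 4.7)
The plan is to reduce everything to the defining formula $\KS_{\mathscr{A}/Y}(x \otimes y) = \langle x, \nabla_{\mathscr{A}/Y}(y)\rangle_{\mathscr{A}}$ and compare the Gauss--Manin connections and polarization pairings on $A$ and $B$ after transport by $\phi$. Concretely, for sections $x,y$ of $\omega_{B/Y}$, I want to write
\[
\KS_{A/Y}(\phi^*x\otimes\phi^*y)=\langle \phi^*x,\nabla_{A/Y}(\phi^*y)\rangle_A
\]
and then push $\phi^*$ through $\nabla$ and out of the pairing.

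The first ingredient is functoriality of the Gauss--Manin connection: for a morphism $\phi\colon A\to B$ of smooth proper $Y$-schemes, the induced map $\phi^*\colon H^1_{\mathrm{dR}}(B/Y)\to H^1_{\mathrm{dR}}(A/Y)$ is horizontal, i.e.
\[
\nabla_{A/Y}\circ \phi^* = (\phi^*\otimes\id_{\Omega^1_{Y/T}})\circ\nabla_{B/Y}.
\]
This is standard (Katz--Oda, or the recipe in~\cite[Section 9]{FaltingsChai}) and I would quote it. Applying it, $\nabla_{A/Y}(\phi^*y)=(\phi^*\otimes\id)\nabla_{B/Y}(y)$.

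The second ingredient is the compatibility of polarization pairings under $\phi$: the hypothesis $\phi^t\circ\mu_B\circ\phi=[\nu(\phi)]_{A^t}\circ\mu_A$ should translate, on $H^1_{\mathrm{dR}}$, into
\[
\langle \phi^*x,\phi^*z\rangle_A=\nu(\phi)\,\langle x,z\rangle_B
\quad\text{for all }x,z\in H^1_{\mathrm{dR}}(B/Y).
\]
I would prove this by writing $\langle\cdot,\cdot\rangle_A$ as the composition of the canonical duality pairing $H^1_{\mathrm{dR}}(A/Y)\otimes H^1_{\mathrm{dR}}(A^t/Y)\to\mathcal{O}_Y$ with $\mu_A^*$, using the adjunction $\langle\phi^*x,w\rangle_{\mathrm{can},A}=\langle x,(\phi^t)^*w\rangle_{\mathrm{can},B}$, and then substituting $(\phi^t)^*\circ \mu_A^* = \mu_B^*\circ\phi^*\circ [\nu(\phi)] = \nu(\phi)\,\mu_B^*\circ\phi^*$, which is exactly the statement $\phi^t\mu_B\phi=[\nu(\phi)]\mu_A$ transposed to cohomology (and using $[n]^*=n$ on $H^1$, not $n^2$, since the polarization absorbs one factor through $\mu_A$ vs.\ $[\nu]\mu_A$).

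Chaining these two ingredients,
\[
\KS_{A/Y}(\phi^*x\otimes\phi^*y)=\langle \phi^*x,(\phi^*\otimes\id)\nabla_{B/Y}(y)\rangle_A
=\nu(\phi)\langle x,\nabla_{B/Y}(y)\rangle_B=\nu(\phi)\KS_{B/Y}(x\otimes y),
\]
where in the middle equality I apply the pairing compatibility slot-wise (the pairing has been extended $\mathcal{O}_Y$-linearly in the second variable, so the $\Omega^1_{Y/T}$-factor is inert). The main obstacle I anticipate is pinning down the exact scalar in Step 2: one must be careful about whether the factor is $\nu(\phi)$ or $\nu(\phi)^{1/2}$ or similar, and how the Rosati involution enters. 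The cleanest way to avoid sign/scaling ambiguity is to verify the identity first for $\phi=[n]_A$ (where $\nu(\phi)=n^2$ and $\phi^*=n$ on $H^1$, giving $\langle nx,ny\rangle_A=n^2\langle x,y\rangle_A=\nu(\phi)\langle x,y\rangle_A$) and then deduce the general case from the bilinearity of the construction in $\mu$ applied to $\phi^t\mu_B\phi=[\nu(\phi)]\mu_A$.
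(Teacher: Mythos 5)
Your argument is correct and follows the same route as the paper: reduce via the definition $\KS=\langle\cdot,\nabla(\cdot)\rangle$ and the horizontality $\nabla_A\circ\phi^*=(\phi^*\otimes\id)\circ\nabla_B$ of the Gauss--Manin connection to the pairing identity $\langle\phi^*x,\phi^*z\rangle_A=\nu(\phi)\langle x,z\rangle_B$, which is exactly what the hypothesis $\phi^t\circ\mu_B\circ\phi=[\nu(\phi)]\circ\mu_A$ gives. Your extra care in deriving the pairing identity (adjunction against $\phi^t$ and the sanity check with $\phi=[n]_A$) is sound but not needed beyond what the paper records.
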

\begin{proof}
By the definition of  the Kodaira--Spencer morphism
\[\KS_{A/Y}:=\langle \cdot, \nabla_A (\cdot)\rangle_{A},\] 
the statement is equivalent to the equality
\[\langle \phi^*(\cdot), \nabla_A \phi^*(\cdot)\rangle_{A}=\nu(\phi) \langle \cdot, \nabla_B (\cdot)\rangle_{B}.\]
By the functoriality of the Gauss--Manin connection, we deduce  $\nabla_A \circ\phi^*=(\phi^*\otimes \id)\circ\nabla_B$, and reduce the statement to the equality
\[\langle \phi^*(\cdot), \phi^*(\cdot)\rangle_{A}=\nu(\phi) \langle \cdot, \cdot\rangle_{B}.\]
The latter follows from $\phi^t\circ\mu_B\circ \phi=\nu(\phi)\mu_A$.
\end{proof}

\subsection{Commutation relations with prime-to-$p$ Hecke operators}

Following~\cite[Ch.~VII, \S{}3]{FaltingsChai}, we define the action of prime-to-$p$ algebraic correspondences, and of prime-to-$p$ Hecke operators,  on automorphic forms.
For the construction of the moduli spaces of isogenies in the PEL-type case, see~\cite[\S{}1.6--1.7]{BueltelWedhorn}.

Fix a rational prime $\ell\not=p$. \textbf{We assume $\ell$ is good} (see Section~\ref{Galois-bkgd} for definition). 
We denote by
${\mathcal H}_0(G_\ell,\QQ)$ the $\QQ$-subalgebra of the local Hecke algebra $\mathcal{H}(G_\ell,K_\ell;\QQ)$ generated by locally constant functions supported on cosets $K_\ell\gamma K_\ell$, for $\gamma\in G_\ell$ an  integral matrix (see Section~\ref{Hecke-Algs-prelim} for notation).

\begin{defi}
	We denote by $\ell{\rm -Isog}$ the moduli scheme\footnote{In general, $\ell{\rm -Isog}$ is a stack, but it is relatively representable over $\Sh$; as noted in~\cite[\S{}1.7]{BueltelWedhorn}, the latter is a scheme in our setting since we assume that the level is neat.} of $\ell$-isogenies over $\Sh$, and by $\phi\colon {\rm pr}_1^*\Auniv \to {\rm pr}^*_2\Auniv$ the universal $\ell$-isogeny, for \[{\rm pr}=({\rm pr}_1,{\rm pr}_2)\colon \ell{\rm -Isog}\to \Sh\times \Sh\] the natural structure morphism.
\end{defi}

Note that the degree of the universal isogeny is locally constant on $\ell{\rm -Isog}$. For any connected component $Z$ of $\ell{\rm -Isog}$, the two projections ${\rm pr}_i\colon Z\to\Sh$ are proper, and they are finite \'etale over $\Sh[1/\deg(Z,\phi)]$, where $\deg(Z,\phi)$ denotes the degree of $\phi$ on $Z$. In particular, they are finite and \'etale over $\Sh/{\CO_{E,\p}}$.

\begin{defi}
For any dominant weight $\kappa$, there is a natural action of $(Z,\phi)$ on the space $H^0(\Sh,\omega^\kappa)$ of automorphic forms 
  of weight $\kappa$, defined as 
\[T_\phi=T_{(Z,\phi), \kappa}:={\rm tr}\circ \phi^*\circ{\rm pr^*_2}\colon H^0(\Sh,\omega^\kappa)\to H^0(Z,{\rm pr}_2^*\omega^\kappa)\to H^0(Z,{\rm pr}^*_1\omega^\kappa)\to H^0(\Sh,\omega^\kappa)\]
\end{defi}

It follows from the definition that, for any dominant weight $\kappa$,  the operator $T_{(Z,\phi),\kappa}$ induces an action of $(Z,\phi)$ on the space $H^0(\Shp,\omega^\kappa)$ of mod $p$ automorphic forms of weight $\kappa$ (by reduction modulo $p$), and also on the space  $H^0(\shord,\omega^\kappa)$ of $p$-adic automorphic forms of weight $\kappa$ (by restriction).

\begin{defi}\label{defi-heckeaction}

Let $Y$ denote a base scheme such that $\ell$ is invertible in $\cO_Y$, equipped with a map $f\colon Y\to \Sh$ (e.g., $Y=\Sh$ over $\CO_{E,\p}$,  $Y=\Shp$ over ${\bF}_p$, or $Y=\shord$ over $\Z_p$).
 There is a natural $\QQ$-linear map
\[h=h_\ell\colon {\mathcal H}_0(G_\ell,\QQ) \to \QQ[\ell{\rm -Isog}/Y]\]
which to any double coset $K_\ell\gamma K_\ell$, with $\gamma$ an integral matrix in $G_\ell$, associates the union $Z(K_\ell\gamma K_\ell)$ of those connected components of $\ell{\rm -Isog}$ where the universal isogeny is an $\ell$-isogeny of type $K_\ell\gamma K_\ell$.
\end{defi}

By definition, the action on automorphic forms of the prime-to-$p$ Hecke operators agrees with that of the prime-to-$p$ algebraic correspondences (via pullback under $h$).

\begin{thm}\label{TandTheta}  Let $(Z,\phi)$ be a connected component of $\ell{\rm -Isog}$.  
For any dominant weight $\kappa$ and any admissible weight $\lambda$, we have
\begin{enumerate}
\item $T_\phi\circ \Theta^{\lambda}_\kappa  ={\nu(\phi)}^{|\lambda |/2}  \Theta^\lambda_\kappa\circ T_\phi$\label{P1}
\item $T_\phi\circ \Diff^{\lambda}_\kappa ={\nu(\phi)}^{|\lambda |/2} \Diff^\lambda_\kappa\circ T_\phi $\label{P2}
\end{enumerate}
where  $\nu(\phi)$ denotes the similitude factor of $\phi$. 
\end{thm}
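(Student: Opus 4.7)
The plan is to unravel $T_\phi = \Tr_{\pr_1}\circ \phi^* \circ \pr_2^*$ and reduce each identity to a sheaf-level naturality statement for the underlying differential operator. Set $A := \pr_1^*\Auniv$ and $B := \pr_2^*\Auniv$ on $Z$, so $\phi\colon A\to B$; since $\ell$ is good, $\pr_1, \pr_2$ are finite étale. Both $\Theta^\lambda_\kappa$ and $\Diff^\lambda_\kappa$ are assembled from the Gauss--Manin connection, the Kodaira--Spencer isomorphism, the Hodge filtration, the unit root subcrystal or its characteristic-$p$ replacement $\curlyU$, and Schur functors---all natural under étale base change---so they commute with $\pr_2^*$ and with $\Tr_{\pr_1}$. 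It thus suffices to establish the sheaf-level identity
\begin{equation*}
\phi^* \circ \Theta^\lambda_{\kappa,B} = \nu(\phi)^{|\lambda|/2}\, \Theta^\lambda_{\kappa,A} \circ \phi^*
\end{equation*}
(and the analogous identity for $\Diff^\lambda_\kappa$); plugging this into $T_\phi$ yields (1) and (2).

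To prove this, I would trace $\phi^*$ through each ingredient of $\Theta = (\Pi_\curlyU \otimes \ks^{-1}) \circ \nabla|_{\omega}$. First, by functoriality of the Gauss--Manin connection, $\nabla_A \circ \phi^* = (\phi^* \otimes \id) \circ \nabla_B$, contributing no scalar. Second, Lemma~\ref{kodairaisogeny} gives $\ks_A \circ (\phi^* \otimes \phi^*) = \nu(\phi)\,\ks_B$, which after applying $\ks_A^{-1}$ on the left yields $\phi^{*\otimes 2} \circ \ks_B^{-1} = \nu(\phi)\, \ks_A^{-1}$: each application of $\ks^{-1}$ contributes exactly one factor of $\nu(\phi)$. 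Third, for $\Pi_\curlyU = \adj{\ha} \circ \pr_\curlyU$, the naturality of the relative Frobenius ($\relfrob_B \circ \phi = \phi^{(p)} \circ \relfrob_A$) yields both $\phi^*(\curlyU_B) \subseteq \curlyU_A$ and the commutative square $\ha_A \circ \phi^* = \phi^{(p)*} \circ \ha_B$. Because $\ell \ne p$ makes $\phi$ étale, $\phi^*\colon \omega_B \isomto \omega_A$ is an isomorphism, and a direct calculation with the adjugate then shows $\Pi^A_\curlyU \circ \phi^* = \phi^* \circ \Pi^B_\curlyU$ with no extra scalar.

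Combining these three facts, a single application of $\Theta$ picks up exactly one factor of $\nu(\phi)$. Iterating $e = |\lambda|/2$ times via the Leibniz rule (as in the definition of $\Theta_d^{(e)}$) accumulates the factor $\nu(\phi)^{|\lambda|/2}$, and the Young symmetrizer $\yo_\lambda$ together with the Schur projection cutting out $\omega^\kappa$ are equivariant, hence commute with $\phi^*$ on the nose and contribute nothing further. The proof of (2) is structurally identical, with the unit root subcrystal $U$ replacing $\curlyU$; its naturality $\phi^*(U_B) \subseteq U_A$ is a standard property of the Frobenius slope decomposition on the ordinary locus.

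The main technical point I anticipate is the verification that $\Pi_\curlyU$ commutes with $\phi^*$ strictly, not up to an additional scalar, despite the determinant twist by $\vert\omega\vert^{p-1}$ in its target. This bookkeeping uses essentially that $\phi^*\colon\omega_B\isomto\omega_A$ induces an isomorphism $\vert\omega_B\vert\isomto\vert\omega_A\vert$ compatible with the adjugate construction, via the characterization $\adj{\ha}\circ(\ha\otimes\id_{\vert\omega\vert}) = \id_\omega\otimes \hasse$. Once this naturality of $\Pi_\curlyU$ is established, the scalar $\nu(\phi)^{|\lambda|/2}$ in the statement arises purely from the $e$-fold iteration of Kodaira--Spencer via Lemma~\ref{kodairaisogeny}, and the rest of the argument is formal.
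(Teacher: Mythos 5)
Your argument is correct and follows essentially the same route as the paper's proof: reduce to a single application of $\Theta$ (resp.\ $\Diff$), note that the Gauss--Manin connection and $\Pi_\curlyU$ (resp.\ $\Pi_U$) are natural under the isogeny and under the finite \'etale projections, so that the only scalar arises from the Kodaira--Spencer isomorphism via Lemma~\ref{kodairaisogeny}, and iterate $e=|\lambda|/2$ times to accumulate $\nu(\phi)^{|\lambda|/2}$. The paper disposes of the $\Pi_\curlyU$ step by simply invoking ``functoriality,'' whereas you spell out the relative-Frobenius and adjugate bookkeeping; this is a useful elaboration of the same argument rather than a different proof.
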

\begin{proof}
For Part~\eqref{P1}, by the functoriality of the construction of the operators $\Theta^\lambda_\kappa$, it suffices to establish the equality
\[T_\phi\circ \Theta=\nu(\phi)\Theta\circ T_\phi. \]
By definition, $\Theta:=(\Pi_\curlyU\otimes\ks^{-1})\circ \nabla\vert_{\omega_{\Auniv/\Shp}}$.
By the functoriality of the Gauss--Manin connection $\nabla$, and of the definition of $\Pi_\curlyU$,
it suffices to prove the equality
\[\nu(\phi) \KS=\KS\circ (\phi^*\otimes\phi^*)\colon {\rm pr}_2^*H^1_{\rm dR}(\Auniv/\Shp)\otimes {\rm pr}_2^*H^1_{\rm dR}(\Auniv/\Shp)\to {\rm pr}_1^*\Omega^1_{\Shp/T}.\]
This follows from Lemma~\ref{kodairaisogeny}.

For Part~\eqref{P2}, the statement reduces to the equality  $T_\phi\circ \Diff=\nu(\phi) \Diff\circ T_\phi$, and the same argument applies to the operator $\Diff$, with the morphism $\Pi_U$ in place of $\Pi_\curlyU$.
\end{proof}

\begin{rmk} In the Siegel case, 
for an admissible scalar weight $\lambda=\underline{\varla}$, we have that $\varla$ is even, $|\lambda|=g\varla$, and ${\nu(\phi)}^{|\lambda |/2}={\deg(\phi)}^{\varla/2}$.
In particular,  for $g=2$, Theorem~\ref{TandTheta} specializes to Yamauchi's result~\cite[Proposition 3.9]{Yama}.
\end{rmk}

\subsection{Commutation relations with  Hecke operators at $p$}

Following~\cite[Ch. VII,\S 4]{FaltingsChai}, we define the action of $p$-power algebraic correspondences, and of Hecke operators at $p$, on automorphic forms over the ordinary locus $\shord$.
Following \emph{loc.cit.},  we identify $H\times \gm$ with the appropriate maximal Levi subgroup $M$ of $\G$ over $\ZZ_p$  (see Section~\ref{signandlevi_sec}).  

We write $M_p:=M(\QQ_p)$, and $\gamma\in M_p$ as $\gamma=(\alpha,p^d)$ with $\alpha \in H(\QQ_p)$ and $d\in \ZZ$. Note that 
$\gamma=(\alpha,p^d)\in M_p$ is an integral matrix  if and only if  $d\geq 0$ and both $\alpha$ and $p^d\alpha^{-1}$ are integral matrices in $H(\QQ_p)$.
Also, by definition $M(\ZZ_p)=K_p\cap M_p$, hence the local Hecke algebra $\mathcal{H}(M_p,M(\ZZ_p);\QQ)$ is a subalgebra of $\mathcal{H}(G_p,K_p;\QQ)$  (see Section~\ref{Hecke-Algs-prelim} for notation). 

We denote by
${\mathcal H}_0(M_p,\QQ)$ the $\QQ$-subalgebra of  $\mathcal{H}(M_p, M(\ZZ_p);\QQ)$ generated by locally constant functions supported on cosets $M(\ZZ_p)\gamma M(\ZZ_p)$, for $\gamma\in M_p$ an  integral matrix.
\begin{defi}
We denote by $p{\rm -Isog}^o$ the moduli scheme of $p$-isogenies over the ordinary locus $\shord$, and by $\phi\colon {\rm pr}_1^*\Auniv \to {\rm pr}^*_2\Auniv$ the universal $p$-isogeny, for \[{\rm pr}=({\rm pr}_1,{\rm pr}_2)\colon p{\rm -Isog}^o\to \shord\times \shord\] the natural structure morphism.
\end{defi}
By~\cite[\S{}VII.4, Proposition 4.1]{FaltingsChai}, for any connected component $Z$ of $p{\rm -Isog}^o$, the two projections ${\rm pr}_i\colon Z\to\shord$ are finite and flat over $\shord/\witt$.

\begin{defi}
For any dominant weight $\kappa$, there is a natural action of $(Z,\phi)$ on the space $H^0(\shord,\omega^\kappa)$  of $p$-adic automorphic forms of weight $\kappa$, defined as 
\[T_\phi=T_{(Z,\phi)}:={\rm tr}\circ \phi^*\circ{\rm pr^*_2}\colon H^0(\shord,\omega^\kappa)\to H^0(Z,{\rm pr}_2^*\omega^\kappa)\to H^0(Z,{\rm pr}^*_1\omega^\kappa)\to H^0(\shord,\omega^\kappa).\]
\end{defi}

\begin{thm}\label{TandTheta2} Let $(Z,\phi)$ be a connected component of $p{\rm -Isog}^o$.   
For any dominant weight $\kappa$ and any admissible weight $\lambda$, we have
\[T_\phi\circ \Diff^{\lambda}_\kappa={\nu(\phi)}^{|\lambda |/2} \Diff^\lambda_\kappa\circ T_\phi ,\]
where $\nu(\phi)$ denotes the similitude factor of $\phi$.  
In particular, if $\nu(\phi)>1$, then $T_\phi\circ \Diff^{\lambda}_\kappa \equiv 0 \mod p$.
\end{thm}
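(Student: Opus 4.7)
The plan is to mirror the proof of Theorem~\ref{TandTheta}\eqref{P2} in the $p$-adic setting over $\shord$, and then extract the mod $p$ vanishing from the scalar $\nu(\phi)^{|\lambda|/2}$. By the functorial construction of $\Diff_\kappa^\lambda$ from the base operator $\Diff=(\Pi_U\otimes \ks^{-1})\circ\nabla|_{\omega_{\Auniv/\shord}}$ via the Leibniz rule and Schur functors (with a final projection by a Young symmetrizer $\yo_\lambda$), the commutation identity for $\Diff_\kappa^\lambda$ reduces, as in the prime-to-$p$ case, to the single equality
\[T_\phi\circ \Diff = \nu(\phi)\,\Diff\circ T_\phi\]
as morphisms of sheaves over $\shord$, the factor $\nu(\phi)^{|\lambda|/2}$ then arising from the $e=|\lambda|/2$ iterations used to build $\Diff_\kappa^\lambda$.

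To establish this base equality I plan to combine three functoriality statements for the universal $p$-isogeny $\phi\colon \mathrm{pr}_1^*\Auniv\to\mathrm{pr}_2^*\Auniv$ over $Z\subset p\mathrm{-Isog}^o$. First, functoriality of the Gauss--Manin connection yields $\nabla\circ\phi^*=(\phi^*\otimes\id)\circ\nabla$. Second, Lemma~\ref{kodairaisogeny} gives
\[\KS\circ(\phi^*\otimes \phi^*)=\nu(\phi)\,\KS,\]
and hence the analogous relation between the inverses $\ks^{-1}$. Third, and this is the one new ingredient compared with the prime-to-$p$ case, I claim that $\Pi_U$ is compatible with $\phi^*$, i.e.
\[\Pi_{U_A}\circ \phi^*=\phi^*\circ \Pi_{U_B}.\]
Granting these three, the equality $T_\phi\circ \Diff=\nu(\phi)\,\Diff\circ T_\phi$ follows formally by inserting them into the definition of $\Diff$ and using the definition of $T_\phi$ as $\mathrm{tr}\circ \phi^*\circ \mathrm{pr}_2^*$.

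The main obstacle is verifying the $\Pi_U$-compatibility, which does not appear in the prime-to-$p$ argument because there $\curlyU=U$ is handled by the same formalism. The key point is that on $\shord$ the unit root subcrystal $U$ is intrinsically defined as the maximal subcrystal on which the crystalline Frobenius acts as a unit, so any morphism of $F$-crystals preserves the slope filtration. Since $\phi^*\colon H^1_{\mathrm dR}(\Auniv/\shord)\to H^1_{\mathrm dR}(\Auniv/\shord)$ commutes with Frobenius and also preserves the Hodge filtration (sending $\omega$ to $\omega$), it sends $U$ into $U$ and $\omega$ into $\omega$, so it is compatible with the direct sum decomposition $H^1_{\mathrm dR}=\omega\oplus U$; the claimed compatibility of $\Pi_U$ is immediate. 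I will quote~\cite[Ch.~VII, \S{}4]{FaltingsChai} (already used in the paper) for the existence and properties of the universal $p$-isogeny over $\shord$ to ensure that all of these manipulations take place on a scheme where $U$ is defined.

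Finally, for the mod $p$ vanishing, I argue as follows. Since $\phi$ is a $p$-isogeny preserving polarizations up to the scalar $\nu(\phi)$ and $A, B$ have the same relative dimension $g$, the similitude $\nu(\phi)$ must be a non-negative power of $p$; the hypothesis $\nu(\phi)>1$ then forces $\nu(\phi)=p^k$ with $k\geq 1$. Because $\lambda$ is admissible, $|\lambda|$ is positive and even (by the remark following Definition~\ref{admissible-defi}), so $|\lambda|/2$ is a positive integer. Hence $\nu(\phi)^{|\lambda|/2}=p^{k|\lambda|/2}$ is divisible by $p$, and the displayed commutation relation immediately yields $T_\phi\circ\Diff_\kappa^\lambda\equiv 0\pmod p$, completing the proof.
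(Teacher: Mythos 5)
Your proposal is correct and follows essentially the same route as the paper, which simply reduces the identity to the base equality $T_\phi\circ\Diff=\nu(\phi)\,\Diff\circ T_\phi$ by the argument of Theorem~\ref{TandTheta}\eqref{P2} and obtains the vanishing from $\nu(\phi)=p^d$ with $d\geq 1$. The one place you go beyond the paper is in spelling out why $\Pi_U$ is compatible with $\phi^*$ (preservation of the unit root subcrystal by morphisms of $F$-crystals), a point the paper leaves implicit; your justification of it is sound.
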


\begin{proof} The commutation relations follow by the same argument as in the proof of Part~\eqref{P2} of Theorem~\ref{TandTheta}. The vanishing in positive characteristic follows from the equality $\nu(\phi)=p^d$, for $d\geq 1$.
\end{proof}

Consider the mod $p$-reduction \[{\rm pr}_{\overline{\bF}_p}\colon p{\rm -Isog}^o\otimes_{\Witt} {\overline{\bF}_p}\to \shpord\times \shpord.\]
Note that the degree, similitude factor,  and $p$-type of the universal isogeny are locally constant on $p{\rm -Isog}^o$. (Recall that, by definition,  the $p$-type of a $p$-isogeny with similitude factor $p^d$ is a coset  $H(\ZZ_p)\alpha H(\ZZ_p)$ of an integral matrix $\alpha\in H(\QQ_p)$ such that $p^d\alpha^{-1}$ is also integral.)
In~\cite[\S{}VII.4, Proposition 4.1]{FaltingsChai}, for any connected component $Z$ of $p{\rm -Isog}^o\otimes_{\Witt}\overline{\bF}_p$, Faltings and Chai compute the purely inseparable multiplicity $\mu(Z)$ of the geometric fibers of  ${\rm pr}_i\colon Z\to \shpord$
in terms on the degree and the $p$-type  of the universal isogeny on $Z$. Furthermore, 
they prove
 there is a well-defined integral action of the operator 
${\mu(Z)}^{-1}T_{(Z,\phi)}$ on the space of mod $p$ automorphic forms of weight $\kappa$, for all dominant weights $\kappa$. 

We define  the \emph{normalized} action of $(Z,\phi)$ on mod $p$ automorphic forms  as $t_\phi=t_{(Z,\phi)}:={\mu(Z)}^{-1}T_{(Z,\phi)}$.

\begin{defi}
Let $Y$ denote a base scheme of characteristic $p$, equipped with a map $f\colon Y\to\shpord$.
There is a natural $\QQ$-linear map
\[h=h_p\colon
{\mathcal H}_0(M_p,\QQ) \to \QQ[p-{\rm Isog}^o/Y]\]
which to any double coset $M(\ZZ_p)\gamma M(\ZZ_p)$ with $\gamma=(\alpha, p^d)$ an integral matrix in $M_p$, 
  associates ${\mu(Z(\alpha,d))}^{-1} \cdot Z(\alpha, d)$, where $Z(\alpha ,d)$ denotes the union of those connected components of $p-{\rm Isog}^o$ where the universal isogeny is a $p$-isogeny of $p$-type $H(\ZZ_p)\alpha H(\ZZ_p)$ and similitude factor $p^d$.
\end{defi}

By definition, the action of the Hecke operators at $p$ on mod $p$ automorphic forms agrees with the \emph{normalized} action of $p$-power algebraic correspondences.

\subsubsection{Hida's ordinary projector}
In~\cite[\S{}8]{hida} (also~\cite[\S{}3.6]{H02}), Hida establishes
the divisibility of the Hecke operators at $p$ on $p$-adic automorphic forms by a given power of $p$, keeping their integrality. 
More precisely, he proves that the normalized action on mod $p$ automorphic forms lifts to $p$-adic automorphic forms.

For $1\leq j\leq n$, let $\alpha_j\in H(\QQ_p)$ defined as $\alpha_{j,\tau}:= {\rm diag}[{\mathbb I}_{n-j},p{\mathbb I_j}]$,  $\tau\in\T_0$.  (For any positive integer $k$, $\mathbb{I}_k$ denotes the $k\times k$ identity matrix.) Let  $Z_{\alpha_j}$ denote
the union of those connected components of $p{\rm -Isog}^o$ where the universal isogeny has  $p$-type $H(\ZZ_p)\alpha_j H(\ZZ_p)$ and similitude factor $p$.
For $j=1, \dots, n$, Hida proves the integrality of the operators
${{\mu(\alpha_j)}^{-1}} {\mathbb U}(\alpha_j):={\mu(Z_{\alpha_j})}^{-1}T_{(Z_{\alpha_j},\phi)}$ on the space of $p$-adic automorphic
forms over the ordinary locus.

By definition (\cite[\S{}8.3.1, Lemma 8.12]{hida}),
the ordinary projector $\ee$ on the space of $p$-adic automorphic forms is
\[\ee:=\varinjlim_m {{\mathbb U}(p)}^{m!}\]
where  
${\mathbb U}(p):=\prod_1^n {{\mu(\alpha_j)}^{-1}} {\mathbb U}(\alpha_j) $.

Hence, by Theorem~\ref{TandTheta2}, we deduce the following result.
\begin{coro}\label{ed0}
 For any dominant weight $\kappa$, and admissible weight $\lambda$, we have
\[\ee\Diff_\kappa^\lambda=0.\]
\end{coro}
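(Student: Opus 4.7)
The plan is to deduce Corollary~\ref{ed0} directly from Theorem~\ref{TandTheta2} by applying the commutation relation to each factor of Hida's operator $\mathbb{U}(p)=\prod_{j=1}^n \mu(\alpha_j)^{-1}\mathbb{U}(\alpha_j)$ and then passing to the $p$-adic limit defining $\ee$. The key observation is that each $Z_{\alpha_j}$ parametrizes $p$-isogenies of similitude factor $\nu(\phi)=p>1$, so Theorem~\ref{TandTheta2} provides a nontrivial power of $p$ in the commutation that will be amplified when iterated.

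First I would apply Theorem~\ref{TandTheta2} to each component $Z_{\alpha_j}$ to obtain
\[\mathbb{U}(\alpha_j) \circ \Diff_\kappa^\lambda = p^{|\lambda|/2}\, \Diff_\kappa^\lambda \circ \mathbb{U}(\alpha_j).\]
Since $\mu(\alpha_j)$ is a purely inseparable multiplicity (a power of $p$, invertible in $\QQ_p$), dividing both sides by $\mu(\alpha_j)$ preserves the identity and yields the same commutation for the normalized operators $\mu(\alpha_j)^{-1}\mathbb{U}(\alpha_j)$, which act integrally on $p$-adic forms by Hida's integrality results. Taking the product over $j=1,\dots,n$ and iterating $m!$ times yields
\[\mathbb{U}(p)^{m!} \circ \Diff_\kappa^\lambda = p^{m!\cdot n|\lambda|/2}\, \Diff_\kappa^\lambda \circ \mathbb{U}(p)^{m!}.\]
Admissibility of $\lambda$ forces $|\lambda|$ to be positive and even, so $|\lambda|/2\geq 1$ and the exponent $m!\cdot n|\lambda|/2$ tends to infinity with $m$.

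Finally, I would pass to the limit as $m\to\infty$. Since $\mathbb{U}(p)^{m!}$ is $p$-adically integral and $\Diff_\kappa^\lambda$ preserves integrality of $p$-adic forms over $\shord$ (the ingredients $\nabla$, $\Pi_U$, and $\ks^{-1}$ in Section~\ref{ingreds-diffops} are all integral over the ordinary locus, where $\ks$ is an isomorphism), the composition $\Diff_\kappa^\lambda \circ \mathbb{U}(p)^{m!}$ remains $p$-adically bounded. The scalar $p^{m!\cdot n|\lambda|/2}$ therefore drives the right-hand side to zero, while the left-hand side converges to $\ee\circ \Diff_\kappa^\lambda$ by definition of $\ee$. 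The main obstacle, though a mild one, is justifying that the commutation passes through the normalization by $\mu(\alpha_j)^{-1}$ and through the $p$-adic limit defining $\ee$; both points are handled by Hida's integrality theorems in~\cite[\S 8]{hida}, which ensure the required $p$-adic boundedness of $\mu(\alpha_j)^{-1}\mathbb{U}(\alpha_j)$ and hence of $\mathbb{U}(p)^{m!}$.
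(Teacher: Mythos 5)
Your proposal is correct and is exactly the argument the paper intends: the paper deduces Corollary~\ref{ed0} from Theorem~\ref{TandTheta2} without spelling out the details, and your unpacking---applying the commutation relation $T_\phi\circ\Diff^\lambda_\kappa=\nu(\phi)^{|\lambda|/2}\Diff^\lambda_\kappa\circ T_\phi$ with $\nu(\phi)=p$ to each $Z_{\alpha_j}$, noting the normalization $\mu(\alpha_j)^{-1}$ is a harmless scalar, and letting the accumulating factor $p^{m!\cdot n|\lambda|/2}$ kill the limit defining $\ee$ on the $p$-adically separated space of $p$-adic forms---is the natural and correct way to do it.
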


\section{Applications to Galois representations}\label{galois-section}

Theorem~\ref{TandTheta} implies that the mod $p$ differential operators from  Section~\ref{analytic_sec} map Hecke eigenforms to Hecke eigenforms.
When there are Galois representations arising from the corresponding systems of Hecke eigenvalues, it is natural to ask how our differential operators affect the Galois representations.

\subsection{An axiomatic theorem}\label{AT-sec}
We start with a result on Galois representations attached to systems of Hecke eigenvalues for a general algebraic reductive group $G$ defined over $\QQ$ (and split over a number field $F$), as introduced in Section~\ref{Galois-bkgd}.    

We fix a prime $p$ and write $\chi$ for the mod $p$
cyclotomic character.
Given a mod $p$ Hecke eigenform $f$, we denote $\Sigma_f$ the set of places that are bad with respect to $p$ and the level of $f$, and by $R(f)$ the set of Galois representations $\rho\colon\gf\to\hat{G}(\fpbar)$ that are (conjecturally) attached to $f$ as in Conjecture~\ref{conj:galois}.

\begin{thm}[Axiomatic Theorem]\label{axiomaticthm}
  Let $G$ be a connected reductive group over $\QQ$, split over the number field $F$.
  For $i=1,2$, let $f_i$ be a mod $p$ Hecke eigenform on $G$. For $v\notin\Sigma_i:=\Sigma_{f_i}$, let $\Psi_{i, v}\colon \mathcal{H}(G_v,K_v;\fpbar)\to \fpbar$ denote the Hecke eigensystem of $f_i$ at $v$.
Then
  \begin{equation}
    \label{eq:psi}
    \Psi_{2,v}(c_\lambda) = \eta(\lambda(\pi_v)) \Psi_{1,v} (c_\lambda)
  \qquad\text{for all }\lambda\in P^+,v\notin\Sigma_1\cup\Sigma_2,
  \end{equation}
for a character $\eta$ of $G$
 if and only if  
  \begin{equation}
    \label{eq:main}
    (\hat{\eta}\circ\chi)\otimes\rho_1 \in R(f_2) \qquad\text{for all } \rho_1\in R(f_1),
  \end{equation}
for the cocharacter  $\hat{\eta}$ of $\hat{G}$ corresponding to $\eta$ by duality.
\end{thm}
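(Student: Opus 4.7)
The plan is to convert the stated identity on Hecke eigensystems into an identity on Satake parameters $s_{i,v} \in \hat{G}(\fpbar)$, and then translate the Satake parameters into Frobenius conjugacy classes via Conjecture~\ref{conj:galois}. The dictionary is the Satake isomorphism: $\Psi_{i,v}$ corresponds, through the character $\omega_{f_i}$ of $R(\hat{G}) \otimes \fpbar$, to a semisimple conjugacy class $s_{i,v}$ with $\chi_\mu(s_{i,v}) = \omega_{f_i}(\chi_\mu)$, and Conjecture~\ref{conj:galois} identifies $\rho_i(\Frob_v) = s_{i,v}$ for $\rho_i \in R(f_i)$ and $v \notin \Sigma_i$.

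First, I would apply formulas~\eqref{eq:satake} and~\eqref{eq:satake_inv} to write
\[
\Psi_{i,v}(c_\lambda) = \sum_{\mu \leq \lambda} b_\lambda(\mu)\, q_v^{\langle \rho, \mu \rangle}\, \chi_\mu(s_{i,v}),
\]
reducing~\eqref{eq:psi} to a family of identities involving $\chi_\mu$ evaluated at $s_{1,v}$ and $s_{2,v}$. Two properties of $\eta$ being a character of the whole group $G$ (not merely of $T$) will drive the reformulation. Since $\eta$ pairs trivially with every coroot, $\hat{\eta}$ factors through the center $Z(\hat{G})$; thus for any central $z$ one has $\chi_\mu(\hat{\eta}(z)\cdot g) = \hat{\mu}(\hat{\eta}(z))\,\chi_\mu(g)$, and by the duality $\hat{\mu}\circ\hat{\eta} = \eta\circ\mu$ recorded in Section~\ref{AG-bkgd} this central scalar equals $z^{\langle \eta,\mu\rangle}$. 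Moreover, the same vanishing on coroots implies that $\langle \eta,\mu\rangle$ is constant on the interval $\{\mu \leq \lambda\}$ and equal to $\langle \eta,\lambda\rangle$, which matches the scalar $\eta(\lambda(\pi_v))$ (interpreted in $\fpbar$ via the chosen square root of $q_v$).

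Combining these observations yields the key reformulation: as conjugacy classes in $\hat{G}(\fpbar)$,
\[
s_{2,v} = \hat{\eta}(\chi(\Frob_v)) \cdot s_{1,v} \qquad \text{for every } v \notin \Sigma_1 \cup \Sigma_2.
\]
One direction follows directly from the computation above; for the converse, equation~\eqref{eq:psi} for varying $\lambda$ determines $\chi_\mu(s_{2,v})/\chi_\mu(s_{1,v})$ at every $\mu \in P^+$ (using the triangularity in~\eqref{eq:satake}), and since the $\chi_\mu$ span $R(\hat{G})\otimes\fpbar$, this pins down the semisimple class $s_{2,v}$.

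Finally, I would interpret the reformulated condition on the Galois side. Because $\hat{\eta}$ is central-valued, the assignment $g \mapsto \hat{\eta}(\chi(g))\cdot\rho_1(g)$ is a homomorphism $(\hat{\eta}\circ\chi)\otimes\rho_1\colon \gf \to \hat{G}(\fpbar)$, whose value at $\Frob_v$ is $\hat{\eta}(\chi(\Frob_v))\cdot s_{1,v}$. Conjecture~\ref{conj:galois} then says that~\eqref{eq:main} is equivalent to this class coinciding with $s_{2,v}$ for every $v \notin \Sigma_1 \cup \Sigma_2$, closing both directions. The most delicate step I anticipate is bookkeeping around normalizations---reading $\eta(\lambda(\pi_v)) \in F_v^\times$ as an element of $\fpbar^\times$ and keeping the square roots of $q_v$ consistent across all good places---together with the fact (cf.\ the remark after Conjecture~\ref{conj:galois}) that a Galois representation attached to $f$ is only determined up to conjugation, so one works throughout with conjugacy classes.
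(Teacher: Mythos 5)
Your proposal is correct and follows essentially the same route as the paper's proof: reduce Equation~\eqref{eq:main} to the identity $s_{2,v}=\hat{\eta}(\pi_v)\,s_{1,v}$ of Satake parameters via Conjecture~\ref{conj:galois}, use the centrality of $\hat{\eta}$ and the constancy of $\eta\circ\mu$ on dominance intervals (the content of Lemma~\ref{lem:etahat}) to compute $\chi_\lambda(\hat{\eta}(\pi_v)s_{1,v})=\eta(\lambda(\pi_v))\chi_\lambda(s_{1,v})$, and then pass between the $\chi_\lambda$ and the $c_\lambda$ using the triangular Satake formulas~\eqref{eq:satake} and~\eqref{eq:satake_inv}. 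The normalization caveats you flag (interpreting $\eta(\lambda(\pi_v))$ in $\fpbar^\times$, the choice of square root of $q_v$, and working with conjugacy classes) are exactly the abuses of notation present in the paper's own argument.
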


\begin{rmk}
For Galois representations arising from Hecke eigenforms on symplectic and unitary groups, Theorem~\ref{axiomaticthm}  specializes to Theorem~\ref{charpcor} from Section~\ref{intro-newresults}, as we shall observe in Section~\ref{final-effects-section}.
\end{rmk}

\begin{rmk}
  The use of the tensor product sign in $(\hat{\eta}\circ\chi)\otimes\rho_1$ is generally an abuse of notation and should be understood as in Lemma~\ref{lem:notensor}.
  We retain the $\otimes$ notation because it is evocative of the case when $\hat{G}$ is a group of matrices, where we are indeed dealing with a tensor product of representations.
\end{rmk}

The proof of Theorem~\ref{axiomaticthm} uses the following lemmas.

\begin{lem}\label{lem:etahat}
  Let $G$ be a reductive group and let $\eta\colon G\to\gm$ be a character of $G$.
  Then the cocharacter $\hat{\eta}\colon \gm\to\hat{G}$ has image in the center $Z(\hat{G})$ and
  \begin{equation*}
    \eta\circ\mu=\eta\circ\lambda
  \end{equation*}
  for any dominant weights $\mu,\lambda\in P^+$ such that $\mu\leq\lambda$.
\end{lem}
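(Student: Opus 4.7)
The plan is to reduce both assertions to the standard fact that a character $\eta\colon G\to\gm$ of a reductive group $G$ restricts on the maximal torus $T$ to an element of $\xub(T)$ that satisfies $\langle\eta|_T,\alpha^\vee\rangle=0$ for every coroot $\alpha^\vee$ of $G$. This holds because, $G$ being reductive, $\eta$ must factor through the maximal torus quotient $G\to G/[G,G]$, hence kill the image in $T$ of the diagonal of every root $\SL_2$-subgroup of $[G,G]$---that is, every coroot.

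Granting this, the first assertion follows by duality. Under the identifications $\hat{\eta}\in\xlb(\hat{T})=\xub(T)\ni\eta|_T$ and of the roots of $\hat{G}$ with the coroots of $G$ built into $\Psi(G)^\vee=(\xlb,\dlb,\xub,\dub)$, the vanishing $\langle\eta|_T,\alpha^\vee\rangle=0$ translates to $\langle\hat{\alpha}^\vee,\hat{\eta}\rangle=0$ for every root $\hat{\alpha}^\vee$ of $\hat{G}$. A cocharacter of $\hat{T}$ that pairs trivially with every root of $\hat{G}$ has image contained in $Z(\hat{G})$ by the standard characterization of the center of a reductive group in terms of its root datum, so $\hat{\eta}(\gm)\subseteq Z(\hat{G})$.

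For the second assertion, if $\mu\leq\lambda$ in $P^+$, write $\lambda-\mu=\sum_{\alpha^\vee\in\dlb}n_{\alpha^\vee}\alpha^\vee$ with $n_{\alpha^\vee}\in\ZZ_{\geq 0}$, as in the definition of the ordering on $P^+$. Identifying $\Hom(\gm,\gm)$ with $\ZZ$, we have $\eta\circ\kappa=\langle\eta|_T,\kappa\rangle$ for any $\kappa\in\xlb(T)$, so
\[\eta\circ\lambda-\eta\circ\mu=\langle\eta|_T,\lambda-\mu\rangle=\sum_{\alpha^\vee\in\dlb}n_{\alpha^\vee}\langle\eta|_T,\alpha^\vee\rangle=0.\]
No serious obstacle is expected; the only care required is bookkeeping of the identifications $P^+\subset\xlb(T)=\xub(\hat{T})$ and of the duality between roots and coroots, both of which are explicit in the paper's setup of $\Psi(G)$ and $\Psi(G)^\vee$.
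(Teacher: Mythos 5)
Your proposal is correct and follows essentially the same route as the paper: both arguments rest on the observation that $\eta$ kills the derived subgroup of $G$, hence pairs trivially with every coroot, which gives the second assertion directly and gives the first by duality (the paper phrases the duality step as ``$\eta$ is a character of the abelianization, so $\hat{\eta}$ lands in the center,'' while you spell it out via the root-datum characterization of $Z(\hat{G})$ as the common kernel of the roots of $\hat{G}$ --- the same fact). No gaps.
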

\begin{proof}
The image of $\eta$ is abelian.  So $\eta(G^\prime)=1$, where $G^\prime$ denotes the derived subgroup of $G$.
  So $\eta$ induces a character of the abelianization of $G$, hence the dual cocharacter $\hat{\eta}$ lands in the center of $\hat{G}$.
Moreover,  since $\eta(G^\prime)=1$, and the coroots of $G$ are the same as the coroots of $G^\prime$, we see that $\eta\circ\alpha^\vee=1$ for any coroot $\alpha^\vee$.
If $\mu$ and $\lambda$ are comparable, their differ by a linear combination of coroots, hence  $\eta\circ\mu=\eta\circ\lambda$.
\end{proof}

\begin{lem}\label{lem:notensor}
  Let $G$ be a reductive group over a field $k$ and let $\eta\colon G\to\gm$ be a character of $G$.
  Let $\rho\colon \Gamma\to \hat{G}(k)$ be a representation of a group $\Gamma$ into the $k$-valued points of the dual group $\hat{G}$.
  Let $\chi\colon \Gamma\to \gm(k)$ be a $k$-valued character of $\Gamma$.
  Then the map $(\hat{\eta}\circ\chi)\otimes \rho\colon\Gamma\to\hat{G}(k)$ defined by
  \begin{equation}
    \label{eq:rho}
    ((\hat{\eta}\circ\chi)\otimes \rho)(\gamma):=\hat{\eta}(\chi(\gamma))\rho(\gamma)
  \end{equation}
  is a representation of $\Gamma$.
\end{lem}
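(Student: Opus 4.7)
The plan is to verify directly that the proposed map is a group homomorphism, with the key ingredient supplied by Lemma~\ref{lem:etahat}. Indeed, that earlier lemma tells us that the cocharacter $\hat{\eta}\colon \gm\to \hat{G}$ factors through the center $Z(\hat{G})$. This centrality is the only nontrivial input we need; without it, the formula in~\eqref{eq:rho} could not even be expected to be multiplicative, since in general one cannot multiply two homomorphisms pointwise.

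First I would assemble $\hat{\eta}\circ\chi$ as the composition of two group homomorphisms $\Gamma\xrightarrow{\chi}\gm(k)\xrightarrow{\hat{\eta}}Z(\hat{G})(k)\subseteq\hat{G}(k)$, and hence note that it is itself a homomorphism, with image landing in $Z(\hat{G})(k)$. Next I would check multiplicativity of the pointwise product. For $\gamma_1,\gamma_2\in\Gamma$, one computes
\[
((\hat{\eta}\circ\chi)\rho)(\gamma_1\gamma_2)
=\hat{\eta}(\chi(\gamma_1))\hat{\eta}(\chi(\gamma_2))\rho(\gamma_1)\rho(\gamma_2),
\]
using that both $\hat{\eta}\circ\chi$ and $\rho$ are homomorphisms. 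Because $\hat{\eta}(\chi(\gamma_2))$ lies in $Z(\hat{G})(k)$, it commutes with $\rho(\gamma_1)$, so the right-hand side rearranges to
\[
\hat{\eta}(\chi(\gamma_1))\rho(\gamma_1)\cdot\hat{\eta}(\chi(\gamma_2))\rho(\gamma_2)
=((\hat{\eta}\circ\chi)\rho)(\gamma_1)\cdot((\hat{\eta}\circ\chi)\rho)(\gamma_2).
\]
Sending the identity of $\Gamma$ to the identity of $\hat{G}(k)$ is immediate from $\chi(1)=1$ and $\rho(1)=1$.

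There is no real obstacle here: once the centrality of $\hat{\eta}(\gm)$ is in hand from Lemma~\ref{lem:etahat}, the argument is a one-line rearrangement using that central elements commute with everything. The only point worth stressing in the write-up is the reason the notation $\otimes$ is an abuse: we are not literally tensoring representations, but rather twisting a $\hat{G}$-valued representation by a central cocharacter pulled back from $\chi$, which is legitimate precisely because the twist takes values in the center.
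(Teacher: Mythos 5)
Your argument is correct and is exactly the paper's proof: the paper likewise reduces the claim to the centrality of $\hat{\eta}(\chi(\gamma))$ in $\hat{G}(k)$, supplied by Lemma~\ref{lem:etahat}, and you have simply written out the multiplicativity check in full. No issues.
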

\begin{proof}
  In order to show that Equation~\eqref{eq:rho} defines a group homomorphism, it suffices to prove that $\hat{\eta}(\chi(\gamma))$ is in the center of $\hat{G}(k)$, which was done in Lemma~\ref{lem:etahat}.
\end{proof}

We are ready to prove the main result of this section.

\begin{proof}[Proof of Theorem~\ref{axiomaticthm}]
Let $\rho_1\in R(f_1)$ and define $\rho_2:=(\hat{\eta}\circ\chi) \otimes \rho_1$. Then $\rho_1$ and $\rho_2$  are both unramified at primes $v\notin\Sigma:=\Sigma_1\cup\Sigma_2$, and 
$\rho_2(\Frob_v)=\hat{\eta}(\pi_v)\rho_1(\Frob_v)$ for all $v\notin\Sigma$.

For $v\notin\Sigma$, let $s_{ i,v}$ denote the $v$-Satake parameter of $f_i$,  for  $i=1,2$. 
Then $\rho_i\in R(f_i)$ if and only if  
$s_{i,v}= \rho_{i}(\Frob_v)$ for all $v\notin\Sigma$.
Hence 
Equation~\eqref{eq:main}  is equivalent to the equalities 
\begin{align*}
    s_{2,v}=\hat{\eta}(\pi_v)s_{1,v} \text{ for all }v\notin\Sigma.
  \end{align*}

Fix $v\notin\Sigma$.  Recalling the Satake parameters from Section~\ref{Hecke-Algs-prelim}, note that the equality $s_{2,v}=\hat{\eta}(\pi_v)s_{1,v} $  is equivalent to 
  \begin{align*}
    \chi_\lambda(s_{2,v})=\chi_\lambda(\hat{\eta}(\pi_v)s_{1,v})
    \qquad\text{for all }\lambda\in P^+.
  \end{align*}
  
Let $\lambda\in P^+$, and let $\rho_\lambda\colon\hat{G}(\fpbar)\to\GL(V_\lambda)$ be the irreducible representation of $\hat{G}(\fpbar)$ of highest weight $\lambda$. 
  As a function on the maximal torus $\hat{T}$,  the character of $\rho_\lambda$ can be written as
  \begin{equation*}
    \chi_\lambda=\sum_{\mu\leq\lambda} (\dim V_\lambda(\mu)) \hat{\mu}.
  \end{equation*}
  In particular,
  \begin{align*}
    \chi_\lambda(\hat{\eta}(\pi_v) s_{f_1,v}) &= \sum_{\mu\leq\lambda} (\dim V_\lambda(\mu)) \hat{\mu}(\hat{\eta}(\pi_v) s_{f_1,v})
                                   = \sum_{\mu\leq\lambda} (\dim V_\lambda(\mu)) \hat{\mu}(\hat{\eta}(\pi_v)) \hat{\mu}(s_{f_1,v})\\
                                   &= \sum_{\mu\leq\lambda} (\dim V_\lambda(\mu)) \eta(\mu(\pi_v)) \hat{\mu}(s_{f_1,v}) 
                                   = \eta(\lambda(\pi_v))  \sum_{\mu\leq\lambda} (\dim V_\lambda(\mu)) \hat{\mu}(s_{f_1,v}) \\
                                   &= \eta(\lambda(\pi_v)) \chi_\lambda(s_{f_1,v}),
  \end{align*}
  where we used that $\eta(\mu(\pi_v))=\eta(\lambda(\pi_v))$ whenever $\mu\leq\lambda$ (see Lemma~\ref{lem:etahat}).

Hence, Equation~\eqref{eq:main} holds if and only if 
  \begin{equation}
    \label{eq:chi}
    \chi_\lambda(s_{2,v})=\eta(\lambda(\pi_v)) \chi_\lambda(s_{1,v}) 
    \qquad\text{for all }\lambda\in P^+, v\notin \Sigma.
  \end{equation}

  We use Equation~\eqref{eq:satake_inv}  to show that Equation~\eqref{eq:psi} implies Equation~\eqref{eq:chi} (and hence Equation~\eqref{eq:main}).  So
  \begin{align*}
    \chi_\lambda(s_{2,v}) &= \omega_{2}(\chi_\lambda)
                                       = \Psi_{2,v}\left(\mathcal{S}_v^{-1}(\chi_\lambda)\right)
                                       = \Psi_{2,v}\left(q_v^{\langle-\rho,\lambda\rangle}\sum_{\mu\leq\lambda}d_\lambda(\mu)c_\mu\right)\\
                                       &= q_v^{\langle-\rho,\lambda\rangle}\sum_{\mu\leq\lambda}d_\lambda(\mu)\Psi_{2,v}(c_\mu)
                                       = q_v^{\langle-\rho,\lambda\rangle}\sum_{\mu\leq\lambda}d_\lambda(\mu)\eta(\mu(\pi_v))\Psi_{1,v}(c_\mu)\\
                                       &= \eta(\lambda(\pi_v))q_v^{\langle-\rho,\lambda\rangle}\sum_{\mu\leq\lambda}d_\lambda(\mu)\Psi_{1,v}(c_\mu)
                                       = \eta(\lambda(\pi_v))\chi_\lambda(s_{1,v}),
  \end{align*}
  after another appeal to Lemma~\ref{lem:etahat}.
Similarly, Equations~\eqref{eq:satake} can be used  to show that Equation~\eqref{eq:chi} implies Equation~\eqref{eq:psi}.
\end{proof}

\subsection{Effects of differential operators on Galois representations}\label{final-effects-section}
We assume the group $G$ is symplectic or unitary.

\begin{proof}[Proof of Theorem~\ref{charpcor}]
(For consistency with the notation in Section~\ref{AT-sec}, here $\lambda$ denotes a dominant weight in $ P^+$, and the admissible weight in the statement of Theorem~\ref{charpcor} --previously denoted by $\lambda$-- is  $\kappa_0$.)
Theorem~\ref{TandTheta} implies that the assumptions in Theorem~\ref{axiomaticthm} are satisfied by the systems of Hecke eigenvalues associated with a mod $ p$ automorphic form $f$, of weight $\kappa$,  and its image under $\Theta^{\kappa_0}=\Theta^{\kappa_0}_{\kappa}$. 
More precisely, Theorem~\ref{TandTheta} implies
\begin{align*}
\Psi_{\Theta^{\kappa_0}(f),v}(c_\lambda)&={\nu(\lambda(\pi_v))}^{|\kappa_0|/2} \Psi_{f,v} (c_\lambda),
\end{align*}
for all $\lambda\in P^+$ and  all but finitely many places $v$ (e.g., all places $v$ not above $p$ and of good reduction for the Shimura variety $\Sh$). Hence, the theorem is an immediate consequence of Theorem~\ref{axiomaticthm}. 
\end{proof}

Theorem~\ref{charpcor} could play a role in the study of Galois representations in positive characteristic, and  also in generalizations of Serre's weight conjecture.
For modular forms on $\GL_2$ over $\QQ$, the theory of $\Theta$-operators, including an analogue of Theorem~\ref{charpcor}, was used in the study of the modularity of $2$-dimensional Galois representations in positive characteristic.
Serre's weight conjecture predicted that every odd irreducible continuous $2$-dimensional mod $p$ Galois representation $\rho$ that is unramified outside $p$ has a twist by a power of the cyclotomic character that is modular of weight at most $p+1$.
Using $\Theta$-operators, it was shown that the statement also implies that $\rho$ itself is modular, of weight at most $p^2-1$. 

  On the other hand,  Theorem~\ref{charpcor} is only a first step in this direction, as more can be inferred from the existence of mod $p$ weight-raising differential operators. More precisely, the study of the image (and kernel) of the theta operators would yield a range for weights of modularity
for mod $p$ Galois representations, while  the study of theta cycles describes all weights occurring  by twisting the Galois representation by powers of the cyclotomic characters.  For modular forms, this is carried out in~\cite{Katz-theta},~\cite{jochnowitz} and~\cite{Edixhoven}; for Siegel modular forms of degree 2, we refer to~\cite{Yama}; for Hilbert modular forms, we refer to~\cite[Section 16]{AndreattaGoren}.

To exemplify potential future applications,  we consider the case of  Siegel and Hermitian modular forms of scalar, parallel weight, and of $\Theta$-operators that raise the weight by a parallel
weight. That is, we consider iterations of the operator $\Theta $, where $\Theta=\Theta^{\underline 2}$ in the Siegel case and $\Theta=\Theta^{\underline{1}}$ in the Hermitian case. For $d\geq 1$, we write $\Theta^d=\Theta\circ\cdots\circ \Theta$, $d$-times. We assume $p>n$.

\begin{remark}
By Theorem~\ref{charpcor}, $\Theta$ acts on mod $p$ Galois representations by twisting by the $n$-th power of the cyclotomic character, that is
\begin{align*}\label{EQCYCLC}
\rho_{\Theta(f)}={(\hat{\nu}\circ\chi)}^n\otimes \rho_f.
\end{align*}
In particular, when $(n,p-1)=1$,  if the twist of $\rho$ by some power of the cyclotomic character is modular, then $\rho$ itself is modular.    Also, if $\rho$ is a Galois representation associated with a mod $p$ automorphic form $f$, then
any twist of $\rho$ by powers of the cyclotomic character is  associated with the image of $f$ under some iteration of $\Theta$.  That is, if ${\left(\hat{\nu}\circ\chi\right)}^m\otimes\rho =  \rho_f$ for some modular form $f$ and integer $m\geq 1$, then $\rho = \rho_{\Theta^d(f)}$ for $d$ such that $dn \equiv m \mod p-1$.
\end{remark}

Let $f\neq 0$ be a mod $p$ Siegel or Hermitian automorphic form, of parallel weight. We write  $w(f)$ for the \emph{minimal weight} of $f$ (in the terminology of~\cite[Section I]{Katz-theta} recalled in Remark~\ref{katzfiltratiion-rmk}, the minimal weight is called the exact filtration).
By definition, $w(f)$ is  the smallest integer $k>0$ for which there exists a mod $p$ automorphic form $g$ of weight $\underline{k}$ whose (mod $p$) $q$-expansion agrees with that of $f$, that is $g(q)\equiv f(q)\mod p$. In particular, $w(f)=w(\hasse f)$.

  Suppose $f$ is in the image of $\Theta$.
We define  the \emph{$\Theta$-cycle} of $f$ as
  \[\left(w(f), w(\Theta(f)),\dots , w(\Theta^{p-2}(f))\right).\]
 By~\cite[Theorem 5.2.6]{EFMV}, for any mod $p$ automorphic form $g$, we have $\Theta(g)(q)\equiv \Theta^p(g)(q)\bmod p$. We deduce that if $f$ is a mod $p$ automorphic form in the image of the operator $\Theta$, then $\Theta(f)\neq 0$ and $w(f)=w(\Theta^{p-1}f)$.
In particular, the minimal weights
of $f$ and of its images under iterations of the operator $\Theta$ all occur in the $\Theta$-cycle of $f$.

\begin{remark}
By the $p$-adic and algebraic $q$-expansion principles (\cite[Section 8.4]{hida} and~\cite[Prop 7.1.2.15]{la}, also~\cite[Theorem 1]{Ichikawa} for Siegel modular forms  of level 1, and $p>n+2$),
if $f,g$ are two 
 mod $p$ automorphic forms, of scalar parallel weight  $k,h$ respectively,  such that $f(q)\equiv g(q)\mod p$ and $h\leq k$, then $h\equiv k\mod (p-1)$ and $f=\hasse^n g$, for $n=(k-h)/(p-1)$.
We deduce $w(f)=k-b(p-1)$, for  $b\geq 0$ the exponent of the power of $\hasse$ that exactly divides $f$.
In particular, if $f$ is a modular form of weight ${k}$,  $1\leq k\leq p-2$, then $w(f)=k$. Also, if $f$ is a cuspidal modular form of weight $k=p-1$, then $w(f)=p-1$.

Assume  $\Theta(f)\neq 0$.
By Theorem~\ref{ana_thm},  $\Theta(f)$ is an automorphic form of weight $k+2+n(p-1)$; hence, $w(\Theta(f)) = w(f)+2+(n-a)(p-1)$,
where $a\geq 0$ is the exponent of the power of $\hasse$ that exactly divides $\Theta(f)$.  By Remark~\ref{power-remark}, we expect $a\geq n-1$; by Section~\ref{BNRC-section}, the statement is true if $f$ is the mod $p$ reduction of a scalar-weight Siegel modular form of level $1$, and $p> n+2$.
Indeed, by Proposition~\ref{RC_prop},  $\Theta(f)$ is divisible by $\hasse^{n-1}$; hence, $a\geq n-1$ and
\[
  w(\Theta(f))=w(f)+p+1-b(p-1),
\]
for some integer $b\geq 0$.
(Note that if $f$ is the mod $p$ reduction of a scalar-weight Siegel modular form of level $1$,  
 the assumption $\Theta(f)\neq 0$ also implies $w(f)\geq n/2$.)

For modular forms, the integer $b\geq 0 $   is explicitly computed by Jochnowitz in~\cite{jochnowitz} (level $1$) and by Edixhoven in~\cite[\S{}3.2]{Edixhoven} (general level). In particular, they prove that $b=0$ unless $w(f)\equiv 0\mod p$.

\end{remark}
\bibliography{theta}

\newcommand{\etalchar}[1]{$^{#1}$}
\providecommand{\bysame}{\leavevmode\hbox to3em{\hrulefill}\thinspace}
\providecommand{\MR}{\relax\ifhmode\unskip\space\fi MR }
\providecommand{\MRhref}[2]{%
  \href{http://www.ams.org/mathscinet-getitem?mr=#1}{#2}
}
\providecommand{\href}[2]{#2}
\begin{thebibliography}{EFMV18}

\bibitem[ABW82]{ABW}
Kaan Akin, David~A. Buchsbaum, and Jerzy Weyman, \emph{Schur functors and
  {S}chur complexes}, Adv. in Math. \textbf{44} (1982), no.~3, 207--278.
  \MR{658729}

\bibitem[AG05]{AndreattaGoren}
Fabrizio Andreatta and Eyal~Z. Goren, \emph{Hilbert modular forms: mod {$p$}
  and {$p$}-adic aspects}, Mem. Amer. Math. Soc. \textbf{173} (2005), no.~819,
  vi+100. \MR{2110225}

\bibitem[Art13]{arthur}
James Arthur, \emph{The endoscopic classification of representations:
  orthogonal and symplectic groups}, American Mathematical Society Colloquium
  Publications, vol.~61, American Mathematical Society, Providence, RI, 2013.
  \MR{3135650}

\bibitem[Ash92]{ash-galois}
Avner Ash, \emph{Galois representations attached to mod {$p$} cohomology of
  {${\rm GL}(n,{\bf Z})$}}, Duke Math. J. \textbf{65} (1992), no.~2, 235--255.
  \MR{1150586}

\bibitem[BBM82]{BBM}
Pierre Berthelot, Lawrence Breen, and William Messing, \emph{Th\'{e}orie de
  {D}ieudonn\'{e} cristalline. {II}}, Lecture Notes in Mathematics, vol. 930,
  Springer-Verlag, Berlin, 1982. \MR{667344}

\bibitem[BG14]{buzzgee14}
Kevin Buzzard and Toby Gee, \emph{The conjectural connections between
  automorphic representations and {G}alois representations}, Automorphic forms
  and {G}alois representations. {V}ol. 1, London Math. Soc. Lecture Note Ser.,
  vol. 414, Cambridge Univ. Press, Cambridge, 2014, pp.~135--187. \MR{3444225}

\bibitem[BN07]{BoechererNagaoka-firsttheta}
Siegfried B{\"o}cherer and Shoyu Nagaoka, \emph{On mod {$p$} properties of
  {S}iegel modular forms}, Math. Ann. \textbf{338} (2007), no.~2, 421--433.
  \MR{2302069 (2008d:11041)}

\bibitem[Bof88]{Boffi}
Giandomenico Boffi, \emph{The universal form of the {L}ittlewood-{R}ichardson
  rule}, Adv. in Math. \textbf{68} (1988), no.~1, 40--63. \MR{931171}

\bibitem[BW06]{BueltelWedhorn}
Oliver B\"{u}ltel and Torsten Wedhorn, \emph{Congruence relations for {S}himura
  varieties associated to some unitary groups}, J. Inst. Math. Jussieu
  \textbf{5} (2006), no.~2, 229--261. \MR{2225042}

\bibitem[CEF{\etalchar{+}}16]{CEFMV}
Ana Caraiani, Ellen Eischen, Jessica Fintzen, Elena Mantovan, and Ila Varma,
  \emph{$p$-adic $q$-expansion principles on unitary {S}himura varieties},
  Directions in Number Theory, vol.~3, Springer International, 2016,
  pp.~197--243.

\bibitem[CGJ95]{CGJ}
Robert~F. Coleman, Fernando~Q. Gouv\^{e}a, and Naomi Jochnowitz, \emph{{$E_2$},
  {$\Theta$}, and overconvergence}, Internat. Math. Res. Notices (1995), no.~1,
  23--41. \MR{1317641}

\bibitem[CH13]{chenevierharris}
Ga\"{e}tan Chenevier and Michael Harris, \emph{Construction of automorphic
  {G}alois representations, {II}}, Camb. J. Math. \textbf{1} (2013), no.~1,
  53--73. \MR{3272052}

\bibitem[CK16]{chen-kiming}
Imin Chen and Ian Kiming, \emph{On the theta operator for modular forms modulo
  prime powers}, Mathematika \textbf{62} (2016), no.~2, 321--336. \MR{3480975}

\bibitem[Col95]{Coleman95}
Robert~F. Coleman, \emph{Classical and overconvergent modular forms}, vol.~7,
  1995, Les Dix-huiti\`emes Journ\'{e}es Arithm\'{e}tiques (Bordeaux, 1993),
  pp.~333--365. \MR{1413582}

\bibitem[Con06]{conrad-hasse}
Brian Conrad, \emph{Higher-level canonical subgroups in abelian varieties}, 46
  pages. \url{http://math.stanford.edu/~conrad/papers/subgppaper.pdf}, 2006.

\bibitem[Con14]{conrad-reductive}
\bysame, \emph{Reductive group schemes}, Autour des sch\'{e}mas en groupes.
  {V}ol. {I}, Panor. Synth\`eses, vol. 42/43, Soc. Math. France, Paris, 2014,
  pp.~93--444. \MR{3362641}

\bibitem[CP04]{padiffops2}
Michel Courtieu and Alexei Panchishkin, \emph{Non-{A}rchimedean {$L$}-functions
  and arithmetical {S}iegel modular forms}, second ed., Lecture Notes in
  Mathematics, vol. 1471, Springer-Verlag, Berlin, 2004. \MR{2034949
  (2005e:11056)}

\bibitem[Del71]{deligne1}
Pierre Deligne, \emph{Formes modulaires et repr\'{e}sentations {$l$}-adiques},
  S\'{e}minaire {B}ourbaki. {V}ol. 1968/69: {E}xpos\'{e}s 347--363, Lecture
  Notes in Math., vol. 175, Springer, Berlin, 1971, pp.~Exp. No. 355, 139--172.
  \MR{3077124}

\bibitem[Del73]{deligne2}
\bysame, \emph{Formes modulaires et repr\'{e}sentations de {${\rm GL}(2)$}},
  Modular functions of one variable, {II} ({P}roc. {I}nternat. {S}ummer
  {S}chool, {U}niv. {A}ntwerp, {A}ntwerp, 1972), Lecture Notes in Math., vol.
  349, Springer, Berlin, 1973, pp.~55--105. \MR{0347738}

\bibitem[Die07]{dieulefait1}
Luis Dieulefait, \emph{The level 1 weight 2 case of {S}erre's conjecture}, Rev.
  Mat. Iberoam. \textbf{23} (2007), no.~3, 1115--1124. \MR{2414504}

\bibitem[Die09]{dieulefait2}
\bysame, \emph{The level 1 case of {S}erre's conjecture revisited}, Atti Accad.
  Naz. Lincei Rend. Lincei Mat. Appl. \textbf{20} (2009), no.~4, 339--346.
  \MR{2550849}

\bibitem[DS74]{deligne-serre}
Pierre Deligne and Jean-Pierre Serre, \emph{Formes modulaires de poids {$1$}},
  Ann. Sci. \'{E}cole Norm. Sup. (4) \textbf{7} (1974), 507--530 (1975).
  \MR{0379379}

\bibitem[dSG16]{DSG}
Ehud de~Shalit and Eyal~Z. Goren, \emph{A theta operator on {P}icard modular
  forms modulo an inert prime}, Res. Math. Sci. \textbf{3} (2016), Paper No.
  28, 65. \MR{3543240}

\bibitem[dSG19]{DSG2}
E.~de~Shalit and E.~Goren, \emph{Theta operators on unitary {S}himura
  varieties}, Algebra and Number Theory \textbf{13} (2019), no.~8, 1829--1877.

\bibitem[Edi92]{Edixhoven}
Bas Edixhoven, \emph{The weight in {S}erre's conjectures on modular forms},
  Invent. Math. \textbf{109} (1992), no.~3, 563--594. \MR{1176206 (93h:11124)}

\bibitem[EFMV18]{EFMV}
Ellen Eischen, Jessica Fintzen, Elena Mantovan, and Ila Varma,
  \emph{Differential operators and families of automorphic forms on unitary
  groups of arbitrary signature}, Documenta Mathematica \textbf{23} (2018),
  445--495.

\bibitem[EI98]{EholzerIbukiyama}
Wolfgang Eholzer and Tomoyoshi Ibukiyama, \emph{Rankin--{C}ohen type
  differential operators for {S}iegel modular forms}, Internat. J. Math.
  \textbf{9} (1998), no.~4, 443--463. \MR{1635181 (2000c:11079)}

\bibitem[Eic54]{eichler}
Martin Eichler, \emph{Quatern\"{a}re quadratische {F}ormen und die
  {R}iemannsche {V}ermutung f\"{u}r die {K}ongruenzzetafunktion}, Arch. Math.
  \textbf{5} (1954), 355--366. \MR{0063406}

\bibitem[Eis12]{EDiffOps}
Ellen Eischen, \emph{{$p$}-adic differential operators on automorphic forms on
  unitary groups}, Ann. Inst. Fourier (Grenoble) \textbf{62} (2012), no.~1,
  177--243. \MR{2986270}

\bibitem[EM20]{EiMa2}
Ellen Eischen and Elena Mantovan, \emph{Entire theta operators at unramified
  primes}, 42 pages. \url{https://arxiv.org/pdf/2002.09450.pdf}.

\bibitem[EM21]{EiMa}
\bysame, \emph{$p$-adic families of automorphic forms in the $\mu$-ordinary
  setting}, Amer. J. Math. \textbf{143} (2021), no.~1, 52 pages. To appear.

\bibitem[FC90]{FaltingsChai}
Gerd Faltings and Ching-Li Chai, \emph{Degeneration of abelian varieties},
  Ergebnisse der Mathematik und ihrer Grenzgebiete, vol.~22, Springer-Verlag,
  Berlin, 1990, With an appendix by David Mumford. \MR{1083353 (92d:14036)}

\bibitem[Fre75]{Freitag}
Eberhard Freitag, \emph{Holomorphe {D}ifferentialformen zu {K}ongruenzgruppen
  der {S}iegelschen {M}odulgruppe}, Invent. Math. \textbf{30} (1975), no.~2,
  181--196. \MR{450634}

\bibitem[Ghi19]{ghitza_rims}
Alexandru Ghitza, \emph{Differential operators on modular forms (mod $p$)},
  Proceedings of the workshop Analytic and Arithmetic Theory of Automorphic
  Forms, RIMS Kokyuroku, no. 2100, Kyoto, 2019, pp.~52--64.

\bibitem[GN17]{GoldringNicole}
Wushi Goldring and Marc-Hubert Nicole, \emph{The {$\mu$}-ordinary {H}asse
  invariant of unitary {S}himura varieties}, J. Reine Angew. Math. \textbf{728}
  (2017), 137--151. \MR{3668993}

\bibitem[Gor01]{GorenHasse}
Eyal~Z. Goren, \emph{Hasse invariants for {H}ilbert modular varieties}, Israel
  J. Math. \textbf{122} (2001), 157--174. \MR{1826497}

\bibitem[Gro98]{Gross-satake}
Benedict Gross, \emph{On the {S}atake isomorphism}, Galois representations in
  arithmetic algebraic geometry ({D}urham, 1996), London Math. Soc. Lecture
  Note Ser., vol. 254, Cambridge Univ. Press, Cambridge, 1998, pp.~223--237.
  \MR{1696481 (2000e:22008)}

\bibitem[Har81]{hasv}
Michael Harris, \emph{Special values of zeta functions attached to {S}iegel
  modular forms}, Ann. Sci. \'Ecole Norm. Sup. (4) \textbf{14} (1981), no.~1,
  77--120. \MR{MR618732 (82m:10046)}

\bibitem[Har86]{ha86}
\bysame, \emph{Arithmetic vector bundles and automorphic forms on {S}himura
  varieties. {II}}, Compositio Math. \textbf{60} (1986), no.~3, 323--378.
  \MR{MR869106 (88e:11047)}

\bibitem[Hid02]{H02}
Haruzo Hida, \emph{Control theorems of coherent sheaves on {S}himura varieties
  of {PEL} type}, J. Inst. Math. Jussieu \textbf{1} (2002), no.~1, 1--76.
  \MR{MR1954939 (2003m:11086)}

\bibitem[Hid04]{hida}
\bysame, \emph{{$p$}-adic automorphic forms on {S}himura varieties}, Springer
  Monographs in Mathematics, Springer-Verlag, New York, 2004. \MR{MR2055355
  (2005e:11054)}

\bibitem[Ibu99]{Ibukiyama}
T.~Ibukiyama, \emph{On differential operators on automorphic forms and
  invariant pluri-harmonic polynomials}, Comment. Math. Univ. St. Paul.
  \textbf{48} (1999), no.~1, 103--118. \MR{1684769 (2000e:11063)}

\bibitem[Ich08]{Ichikawa}
Takashi Ichikawa, \emph{Congruences between {S}iegel modular forms}, Math. Ann.
  \textbf{342} (2008), no.~3, 527--532. \MR{2430988}

\bibitem[Igu59]{igusa}
Jun-ichi Igusa, \emph{Kroneckerian model of fields of elliptic modular
  functions}, Amer. J. Math. \textbf{81} (1959), 561--577. \MR{0108498}

\bibitem[Jan03]{Jantzen}
Jens~C. Jantzen, \emph{Representations of algebraic groups}, second ed.,
  Mathematical Surveys and Monographs, vol. 107, American Mathematical Society,
  Providence, RI, 2003. \MR{2015057}

\bibitem[Joc82]{jochnowitz}
Naomi Jochnowitz, \emph{A study of the local components of the {H}ecke algebra
  mod {$l$}}, Trans. Amer. Math. Soc. \textbf{270} (1982), no.~1, 253--267.
  \MR{642340}

\bibitem[Kat72]{Katz-diff}
Nicholas Katz, \emph{Algebraic solutions of differential equations
  ({$p$}-curvature and the {H}odge filtration)}, Invent. Math. \textbf{18}
  (1972), 1--118. \MR{0337959 (49 \#2728)}

\bibitem[Kat73]{ka73}
\bysame, \emph{Travaux de {D}work}, S\'{e}minaire {B}ourbaki, 24\`eme ann\'{e}e
  (1971/1972), {E}xp. {N}o. 409, 1973, pp.~167--200. Lecture Notes in Math.,
  Vol. 317. \MR{0498577}

\bibitem[Kat77]{Katz-theta}
\bysame, \emph{A result on modular forms in characteristic {$p$}}, Modular
  functions of one variable, {V} ({P}roc. {S}econd {I}nternat. {C}onf., {U}niv.
  {B}onn, {B}onn, 1976), Springer, Berlin, 1977, pp.~53--61. Lecture Notes in
  Math., Vol. 601. \MR{0463169 (57 \#3127)}

\bibitem[Kat78]{kaCM}
\bysame, \emph{{$p$}-adic {$L$}-functions for {CM} fields}, Invent. Math.
  \textbf{49} (1978), no.~3, 199--297. \MR{MR513095 (80h:10039)}

\bibitem[Kli90]{Klingen}
Helmut Klingen, \emph{Introductory lectures on {S}iegel modular forms},
  Cambridge Studies in Advanced Mathematics, vol.~20, Cambridge University
  Press, Cambridge, 1990. \MR{1046630}

\bibitem[Kot92]{kottwitz}
Robert~E. Kottwitz, \emph{Points on some {S}himura varieties over finite
  fields}, J. Amer. Math. Soc. \textbf{5} (1992), no.~2, 373--444.
  \MR{MR1124982 (93a:11053)}

\bibitem[KS20]{kretshin}
Arno Kret and Sug~Woo Shin, \emph{Galois representations for general symplectic
  groups}, to appear in Journal of the European Mathematical Society (2020), 92
  pages, \url{https://arxiv.org/pdf/1609.04223.pdf}.

\bibitem[KW09a]{KW1}
Chandrashekhar Khare and Jean-Pierre Wintenberger, \emph{On {S}erre's
  conjecture for 2-dimensional mod {$p$} representations of {${\rm
  Gal}(\overline{\mathbb{Q}}/\mathbb{Q})$}}, Ann. of Math. (2) \textbf{169}
  (2009), no.~1, 229--253. \MR{2480604}

\bibitem[KW09b]{KW2}
\bysame, \emph{Serre's modularity conjecture. {I}}, Invent. Math. \textbf{178}
  (2009), no.~3, 485--504. \MR{2551763}

\bibitem[KW09c]{KW3}
\bysame, \emph{Serre's modularity conjecture. {II}}, Invent. Math. \textbf{178}
  (2009), no.~3, 505--586. \MR{2551764}

\bibitem[KW18]{KWhasse}
Jean-Stefan Koskivirta and Torsten Wedhorn, \emph{Generalized {$\mu$}-ordinary
  {H}asse invariants}, J. Algebra \textbf{502} (2018), 98--119. \MR{3774885}

\bibitem[Lan13]{la}
Kai-Wen Lan, \emph{Arithmetic compactifications of {PEL}-type {S}himura
  varieties}, London Mathematical Society Monographs, vol.~36, Princeton
  University Press, 2013.

\bibitem[LS11]{LanSuh}
K.-W. Lan and J.~Suh, \emph{Liftability of mod {$p$} cusp forms of parallel
  weights}, Int. Math. Res. Not. IMRN (2011), no.~8, 1870--1879. \MR{2806524
  (2012e:11085)}

\bibitem[Pan05]{padiffops1}
Alexei~A. Panchishkin, \emph{The {M}aass--{S}himura differential operators and
  congruences between arithmetical {S}iegel modular forms}, Mosc. Math. J.
  \textbf{5} (2005), no.~4, 883--918, 973--974. \MR{2266464 (2007k:11072)}

\bibitem[Res75]{Resnikoff}
H.~L. Resnikoff, \emph{Automorphic forms of singular weight are singular
  forms}, Math. Ann. \textbf{215} (1975), 173--193. \MR{382177}

\bibitem[Sch15]{scholze}
Peter Scholze, \emph{On torsion in the cohomology of locally symmetric
  varieties}, Ann. of Math. (2) \textbf{182} (2015), no.~3, 945--1066.
  \MR{3418533}

\bibitem[Ser75]{serreconj1}
Jean-Pierre Serre, \emph{Valeurs propres des op\'{e}rateurs de {H}ecke modulo
  {$l$}}, Journ\'{e}es {A}rithm\'{e}tiques de {B}ordeaux ({C}onf., {U}niv.
  {B}ordeaux, 1974), Soc. Math. France, Paris, 1975, pp.~109--117.
  Ast\'{e}risque, Nos. 24--25. \MR{0382173}

\bibitem[Ser87]{serreconj2}
\bysame, \emph{Sur les repr\'{e}sentations modulaires de degr\'{e} {$2$} de
  {${\rm Gal}(\overline{\bf Q}/{\bf Q})$}}, Duke Math. J. \textbf{54} (1987),
  no.~1, 179--230. \MR{885783}

\bibitem[Shi58a]{shimura1}
Goro Shimura, \emph{Correspondances modulaires et les fonctions {$\zeta $} de
  courbes alg\'{e}briques}, J. Math. Soc. Japan \textbf{10} (1958), 1--28.
  \MR{0095173}

\bibitem[Shi58b]{shimura2}
\bysame, \emph{Fonctions automorphes et correspondances modulaires}, Proc.
  {I}nternat. {C}ongress {M}ath. 1958, Cambridge Univ. Press, New York, 1958,
  pp.~330--338. \MR{0117230}

\bibitem[Str10]{stroh-relevement}
B.~Stroh, \emph{Rel\`evement de formes modulaires de {S}iegel}, Proc. Amer.
  Math. Soc. \textbf{138} (2010), no.~9, 3089--3094. \MR{2653933 (2011g:11093)}

\bibitem[Str13]{stroh-classicite}
\bysame, \emph{Classicit\'e en th\'eorie de {H}ida}, Amer. J. Math.
  \textbf{135} (2013), no.~4, 861--889. \MR{3086063}

\bibitem[TV16]{treumannvenkatesh}
David Treumann and Akshay Venkatesh, \emph{Functoriality, {S}mith theory, and
  the {B}rauer homomorphism}, Ann. of Math. (2) \textbf{183} (2016), no.~1,
  177--228. \MR{3432583}

\bibitem[Wed99]{wedhorn}
Torsten Wedhorn, \emph{Ordinariness in good reductions of {S}himura varieties
  of {PEL}-type}, Ann. Sci. \'Ecole Norm. Sup. (4) \textbf{32} (1999), no.~5,
  575--618. \MR{1710754 (2000g:11054)}

\bibitem[Wed08]{wedhorn09}
\bysame, \emph{De {R}ham cohomology of varieties over fields of positive
  characteristic}, Higher-dimensional geometry over finite fields, NATO Sci.
  Peace Secur. Ser. D Inf. Commun. Secur., vol.~16, IOS, Amsterdam, 2008,
  pp.~269--314. \MR{2484083}

\bibitem[Yam19]{Yama}
Takuya Yamauchi, \emph{The weight reduction of mod $p$ {S}iegel modular forms
  for $\mathrm{GSp}_4$}, 54 pages. \url{http://arxiv.org/abs/1410.7894}, 2019.

\end{thebibliography}

\end{document}